\documentclass[a4paper]{amsart}
\usepackage{fullpage}
\usepackage{hyperref}
 \hypersetup{
     colorlinks=true,
     linkcolor=purple,
     filecolor=cyan,
     citecolor = cyan,
     urlcolor=cyan,
     }

\usepackage{scalerel}
\usepackage{tikz}
\usetikzlibrary{cd}
\usepackage{bbm}
\usepackage[
    style=alphabetic,
    backend=bibtex,
    maxalphanames=99,
    maxnames=99
]{biblatex}
\newcommand{\isomto}{\stackrel{\textstyle\sim}{\smash{\longrightarrow}\rule{0pt}{0.4ex}}}

\DeclareMathOperator{\DA}{\mathrm{DA}}
\newcommand{\ula}{^{\!\mathrm{ULA}}}
\newcommand{\rig}{^{\mathrm{rig}}}
\newcommand{\cons}{_{\mathrm{c}}}
\newcommand{\Divtil}{\widetilde{\mathrm{Div}}_0^{\perp}}
\newcommand{\Divtilk}{\widetilde{\mathrm{Div}}_{0,k}^{\perp}}

\newcommand{\Hk}[1]{\ensuremath{\operatorname{Hk}_{#1}}}

\makeatletter
\def\lboxtimes{\@ifnextchar_{\@lboxtimessub}{\@lboxtimesnosub}}
\def\@lboxtimessub_#1{\mathchoice{\mathbin{\mathop{\boxtimes}^L}_{#1}}%
  {\boxtimes^L_{#1}}{\boxtimes^L_{#1}}{\boxtimes^L_{#1}}}
\def\@lboxtimesnosub{\mathbin{\mathop{\boxtimes}^L}}
\makeatother
\usepackage{amscd}
\usepackage[arrow, matrix, curve]{xy}
\usepackage{color}
\usepackage{blindtext}
\usepackage{tikz}
\usepackage{tikz-cd}
\usepackage{comment}
\usepackage{mathrsfs}
\usepackage[compress,capitalize,nameinlink]{cleveref}

\usepackage{relsize}
\usepackage[bbgreekl]{mathbbol}
\usepackage{amsfonts}
\usepackage[T1]{fontenc}
\DeclareSymbolFontAlphabet{\mathbb}{AMSb}
\DeclareSymbolFontAlphabet{\mathbbl}{bbold}

\numberwithin{equation}{section}
\newtheorem{Satz}{Satz}[section]
\theoremstyle{definition}
\newtheorem{Corollary}[Satz]{Corollary}
\newtheorem{Lemma}[Satz]{Lemma}
\newtheorem{Definition}[Satz]{Definition}
\newtheorem{Remark}[Satz]{Remark}
\newtheorem{Theorem}[Satz]{Theorem}

\newtheorem{proposition}[Satz]{Proposition}

\newtheorem{Situation}[Satz]{Situation}

\usepackage{amssymb}
\usepackage{fullpage}

\DeclareMathOperator{\Hom}{Hom}

\DeclareMathOperator{\Aut}{Aut}
\DeclareMathOperator{\Gal}{Gal}

\DeclareMathOperator{\Lie}{Lie}
\DeclareMathOperator{\Spec}{Spec}

\DeclareMathOperator{\GL}{GL}

\DeclareMathOperator{\Ker}{Ker}

\DeclareMathOperator{\ad}{ad}
\DeclareMathOperator{\eq}{eq}

\DeclareMathOperator{\G}{\mathcal{G}}

\DeclareMathOperator{\Z}{\mathbb{Z}}
\DeclareMathOperator{\colim}{colim}

\DeclareMathOperator{\Pic}{Pic}

\DeclareMathOperator{\Set}{Set}
\DeclareMathOperator{\perf}{perf}

\DeclareMathOperator{\Res}{Res}

\DeclareMathOperator{\red}{red}

\DeclareMathOperator{\Mod}{Mod}

\DeclareMathOperator{\op}{op}
\DeclareMathOperator{\Gr}{Gr}
\DeclareMathOperator{\Witt}{Witt}
\DeclareMathOperator{\Perf}{Perf}

\DeclareMathOperator{\Kos}{Kos}

\DeclareMathOperator{\D}{D}

\DeclareMathOperator{\Stab}{Stab}

\DeclareMathOperator{\Cent}{Cent}

\DeclareMathOperator{\Ind}{Ind}

\DeclareMathOperator{\Cat}{\mathrm{Cat}}

\newcommand{\st}{\mathrm{st}}
\newcommand{\Prst}{\mathrm{Pr}^\st}
\DeclareMathOperator{\PZ}{PZ}
\DeclareMathOperator{\Sat}{Sat}

\DeclareMathOperator{\Int}{Int}
\DeclareMathOperator{\bd}{bd}
\DeclareMathOperator{\CT}{CT}
\DeclareMathOperator{\naive}{naive}
\DeclareMathOperator{\abs}{abs}
\DeclareMathOperator{\corr}{corr}

\newcommand{\et}{\operatorname{\acute{e}t}}

\newcommand{\mbF}{\mathbb{F}}

\newcommand{\mbZ}{\mathbb{Z}}

\newcommand{\mcA}{\mathcal{A}}
\newcommand{\mcB}{\mathcal{B}}
\newcommand{\mcC}{\mathcal{C}}

\newcommand{\mcF}{\mathcal{F}}
\newcommand{\mcG}{\mathcal{G}}
\newcommand{\mcH}{\mathcal{H}}

\newcommand{\mcN}{\mathcal{N}}
\newcommand{\mcO}{\mathcal{O}}
\newcommand{\mcP}{\mathcal{P}}

\newcommand{\mcS}{\mathcal{S}}
\newcommand{\mcT}{\mathcal{T}}
\newcommand{\mcU}{\mathcal{U}}
\newcommand{\mcV}{\mathcal{V}}

\newcommand{\mfa}{\mathfrak{a}}

\newcommand{\id}{\mathrm{id}}

\newcommand{\ami}{\color{purple}}

\author{Sebastian Bartling}
\address{Universität Duisburg-Essen, Fakultät für Mathematik, Thea-Leymann Straße, 45127 Essen, Germany}
\email{sebastian-bartling@hotmail.de}
\author{R{\i}zacan \c{C}ilo\u{g}lu}
\address{Universität Duisburg-Essen, Fakultät für Mathematik, Thea-Leymann Straße, 45127 Essen, Germany}
\email{rizacan.ciloglu@gmail.com}
\title{Parahoric motivic Hecke categories in equal and mixed characteristic}
\title{Parahoric motivic Hecke categories in equal and mixed characteristic}
\date{\today}
\usepackage{silence}
\ActivateWarningFilters[todo]
\begin{document}
\begin{abstract}
    We study constructible $\mathbb{Z}[\tfrac{1}{p}]$-linear \'etale motives
    on parahoric Hecke stacks associated with quasi-split tamely ramified reductive groups.
    We prove a canonical equivalence between their
    equal-characteristic and Witt-vector incarnations that is monoidal with respect to convolution. This extends Bando's comparison
    \cite{bandoComparison}, which concerns split
    reductive groups and constructible \'etale sheaves. The proof combines the study of
    a family of affine Grassmannians for Pappas--Zhu group
    schemes with the use of the motivic nearby cycles functor.
\end{abstract}
\maketitle

\section{Introduction}
Parahoric affine flag varieties, and the Hecke categories built from them,
have parallel incarnations in equal and mixed characteristic. Over a local
function field they arise from power-series loop groups, whereas over a
$p$-adic field the corresponding construction uses ramified Witt vectors.
Their Schubert strata are governed by the same Iwahori--Weyl group and the
same Bruhat order, but this common combinatorics does not by itself compare
the ambient spaces or their sheaf theories. For split reductive groups, Bando
\cite{bandoComparison} has found an ingenious geometric construction bridging this divide by placing the two affine
Grassmannians in a single family in perfect geometry and comparing the
associated categories of \'etale sheaves.

There are at least two related reasons to seek a similar comparison before passing to an
$\ell$-adic realization. A $\Z[\tfrac{1}{p}]$-linear motivic equivalence
controls all $\ell\neq p$-adic realizations at once and remains available for
constructions made at the motivic level. Moreover, motivic Satake
equivalences are now known in equal and mixed characteristic
\cite{richarzscholbachMotivicSatake,casshovesholbachIntegralMotivic,vandenHoveRamifiedMotivicSatake},
and the motivic geometrization of the local Langlands correspondence shows
that working motivically can have arithmetic consequences, such as
independence of $\ell$ for the resulting $L$-parameters
\cite{GeometrizationMotivically}. Therefore, a comparison of motivic sheaves on the two incarnations of Hecke stacks would make it possible to transport motivic results known in equal characteristic directly to the mixed characteristic case.
For example, we expect that our results could be useful to approach a motivic enhancement of Zhu's work \cite{ZhuTameCLLC} on a tame categorical local Langlands correspondence. Namely, Zhu's approach uses Bezrukavnikov's equivalence \cite{bezrukavnikovEquivalence} crucially and therefore one would like to transport a putative motivic version of Bezrukavnikov's equivalence for tamely ramified groups from equal to mixed characteristic.

The purpose of this paper is to establish such a comparison for every
connected reductive group that is quasi-split and splits over a tamely ramified extension and for arbitrary parahoric level.
Neither extension of Bando's result is formal. The ramified case
requires uniform control of Pappas--Zhu group schemes throughout Bando's
family, whereas the extension to motivic sheaves uses the motivic nearby cycles functor without ever relying on the conjectural conservativity of that functor.

Let $F$ be a finite extension of $\mathbb{Q}_p$, with ring of integers
$\mcO_F$, residue field $k_F$, uniformizer
$\pi$ and let $k=\overline{k}_{F}$. Let $G$ be a connected reductive $F$-group that splits over a tamely
ramified extension. Let us choose a facet
$\mcF$ in the Bruhat--Tits building of $G$ over $F$ which gives rise to a parahoric model $\mcP$ of $G$ and a rigidification of $G$.
To these data, one can associate a tamely ramified connected reductive group $G^{\flat}$ over $k_{F}(\!(t)\!)$, a facet $\mcF^{\flat}$ in the Bruhat--Tits building of $G^{\flat}$ over $k_{F}(\!(t)\!)$ and therefore also a parahoric model $\mcP^{\flat}$ of $G^{\flat}$.
One obtains the Hecke stacks
$\Hk{}^{\mathrm{Witt}}$ and $\Hk{}^{\mathrm{eq}}$ in mixed and equal
characteristic and their geometric versions $\Hk{k}^{\mathrm{Witt}}$ resp. $\Hk{k}^{\mathrm{eq}}$. Our main result is the following.
\begin{Theorem}\label{Theorem: Introduction}
    There is a canonical equivalence of stable $\infty$-categories
    $$
    \DA\cons(\Hk{k}^{\mathrm{eq}},\Z[\tfrac{1}{p}])
    \simeq
    \DA\cons(\Hk{k}^{\mathrm{Witt}},\Z[\tfrac{1}{p}])
    $$
    that is monoidal for the convolution structure on both sides.
\end{Theorem}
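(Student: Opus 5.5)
Following Bando's strategy, I would place both Hecke stacks in a single family over a perfect base and compare the two fibers via motivic nearby cycles. Concretely, I would work over the perfection of $\Spec k[\![t]\!]$ (or a suitable perfect valuation ring $A$ with generic point $\eta$ and special point $s$). Here the Witt-vector side appears as the "$t\mapsto \pi$" specialization, via $W_{\mcO_F}(A)$ with the ramified Witt vectors, and the equal-characteristic side appears at the other end. First, using the rigidification of $G$ and the tame splitting field, I construct a Pappas--Zhu type group scheme $\underline{\mcP}$ over a two-dimensional base of the form $\mcO[u]$ (or its perfected analogue). Its specializations recover $\mcP$ and $\mcP^{\flat}$, and it is parahoric in every fiber; this uniformity for all points of Bando's family is the first key input. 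From $\underline{\mcP}$ I form a family of partial affine flag varieties $\mathcal{F}\ell_{\underline{\mcP}}\to S$ and the Hecke stack $\Hk{S}=L^+\underline{\mcP}\backslash L\underline{\mcP}/L^+\underline{\mcP}$. I then check that it is ind-(perfectly) proper, and that it is stratified by Schubert cells indexed by the common Iwahori--Weyl double cosets, with each stratum a constant family of perfected affine-space quotients by the same pro-unipotent group.

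Second, I compare motives. With $i$ the special point and $j$ the generic point, consider
$$\DA\cons(\Hk{k}^{\mathrm{eq}})\xleftarrow{\;i^*\;}\DA\cons(\Hk{S})\xrightarrow{\;j^*\;}\DA\cons(\Hk{\bar\eta}).$$
I aim to show that the full subcategory of $\DA\cons(\Hk{S})$ consisting of objects that are constructible with respect to the Schubert stratification relative to $S$ (equivalently ULA, or "reduced to constant families") maps by both restrictions to full subcategories. Essential surjectivity is automatic once we know the categories are generated by $!$-extensions of constant (Tate-twisted) motives on strata. Full faithfulness reduces to the strata, where it becomes a computation of $\DA$ of $B(L^+\mcP_w)$-type stacks over $S$ versus over a point. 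Homotopy invariance and the pro-unipotence of the stabilizers reduce this to $\DA(S)$ versus $\DA(k)$. This is exactly where motivic nearby cycles $\Psi=i^*j_*$ enters: for these stratified-constant objects, $\Psi$ is computed stratumwise and equals the specialization, so the composite $\DA\cons(\Hk{\bar\eta})\to\DA\cons(\Hk{k})$ is fully faithful on generators. I will not need conservativity of $\Psi$ in general, only on the explicit generating objects. Applying the same argument to the Witt family identifies the geometric generic fiber with $\Hk{k}^{\mathrm{Witt}}$, and composing gives the equivalence.

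Third, for monoidality I use that convolution is built from the proper pushforward along the relative convolution diagram $\Hk{S}\times_S\Hk{S}\leftarrow L^+\backslash L\times^{L^+}L/L^+\to\Hk{S}$, which exists uniformly over $S$. Restriction commutes with proper pushforward and with $*$-pullback, and $\Psi$ commutes with proper pushforward and is lax monoidal (strictly so on ULA objects). The equivalences therefore intertwine the convolution products coherently, as symmetric-monoidal-compatible functors of $\infty$-categories. Canonicity follows because no choices are made beyond the family, which is itself determined by the rigidification.

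I expect the main obstacle to be the first step: verifying that the Pappas--Zhu group schemes specialize to parahorics uniformly over Bando's perfect family. This requires controlling connectedness of fibers and the tame descent in the non-noetherian perfect setting, so that the Schubert stratification is indeed constant over $S$. I would first treat the Iwahori case via a relative Bruhat--Tits building and Weil restriction along the tame cover, and then deduce the general parahoric case by taking fixed points and using the projection from Iwahori to parahoric flag varieties.
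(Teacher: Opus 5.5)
Your first step (parahoric base change of the Pappas--Zhu group scheme at every point of the family, ind-properness, Schubert strata with stabilizers $L^{+}\mcG_{\Omega}$) matches the paper's first ingredient. Bando's family lives over $\Divtil\simeq\GL_{2,k_F}^{\perf}$, with the two Grassmannians as fibres over two $k$-points, not over a single trait.

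\textbf{The gap.} Your second step transplants Bando's \'etale argument verbatim, and it fails for motives. You need $i^{*}$ and $j^{*}$ to be fully faithful on stratified-constant (ULA) motives on $\Hk{S}$, and $\Psi$ to be fully faithful on constant generators. None of this holds in $\DA$, because rigidity over strictly henselian bases only holds for torsion coefficients. Already on the unit stratum this breaks:
\begin{itemize}
\item Take $S$ the perfected strict henselization of a curve at a $k$-point, with coordinate $x$, and $K$ its fraction field. Then $\Hom_{\DA(S)}(\Lambda,\Lambda(1)[1])$ contains $\mcO(S)^{\times}\otimes\Lambda$.
\item The non-torsion unit $1+x$ gives a non-zero $\phi\colon\Lambda\to\Lambda(1)[1]$ with $i^{*}\phi=0$.
\item Since $\Psi(j^{*}E)\simeq i^{*}E$ naturally for ULA $E$, the non-zero morphism $j^{*}\phi$ between constant generators is killed by $\Psi$.
\item For the same reason $\DA\cons(\Hk{\bar\eta})$ is not equivalent to $\DA\cons(\Hk{k})$: for instance $\overline{k(x)}^{\times}\otimes\mathbb{Q}\neq 0=\overline{\mathbb{F}}_{p}^{\times}\otimes\mathbb{Q}$. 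So the geometric generic fibre cannot be the intermediary you compose through.
\item Essential surjectivity is not automatic either. Lifting a recollement datum $B\to K_{0}(A)$ from the special fibre to $S$ needs fullness of $i^{*}$. Also, $\DA\cons(k)$ is not generated by Tate motives.
\item The formula $\Psi=i^{*}j_{*}$ is not Ayoub's nearby cycles. On a trait, $i^{*}j_{*}\Lambda\simeq\Lambda\oplus\Lambda(-1)[-1]$, so it does not even recover specialization.
\end{itemize}

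\textbf{What the paper does instead.} It never asks $i^{*}$, $j^{*}$ or $\Psi$ to be faithful, full or conservative.
\begin{itemize}
\item It uses the path $h_{e}\colon X_{e}\to\Divtilk$ from $(\pi,t)$ to $(-t,\pi)$, whose reflection $r\colon x\mapsto 1-x$ swaps the endpoints.
\item Over $X_{\eta}$ both Bando rings become $W_{\mcO}(R)$, and the two $\xi_{2}$ differ by the unit $\nu^{-e}$. So the tame covers match $\gamma_{0}$-equivariantly, giving an honest isomorphism $\rho_{\eta}\colon\Hk{S_{0,\eta}}\isomto\Hk{S_{1,\eta}}$. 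This is needed because $S_{0,\eta}$ and $S_{1,\eta}$ are different $X_{\eta}$-schemes.
\item The equivalence is built by induction on Schubert strata. On a stratum both special fibres are motives on the same $[k/H_{w}]$, which gives $\theta_{w}$.
\item Strata are glued by recollement. Nearby cycles only supply an equivalence of gluing functors $\Theta_{Z}\circ K_{0}\simeq K_{1}\circ\theta_{w}$, via the ULA objects $z_{X}^{*}u_{X,*}A_{X}$ and $\Psi(j^{*}E)\simeq i^{*}E$. Full faithfulness then comes for free.
\item The resulting functor is not a composite of $i^{*}$, $\rho_{\eta}$ and $\Psi$ on all objects. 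So monoidality does not follow merely from those functors commuting with convolution, which is monoidal, not symmetric monoidal.
\item The paper instead constructs the structure maps on objects with ULA lifts. It then extends them through the pullback squares of mapping spaces that recollement provides.
\end{itemize}
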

Here $\DA\cons$ denotes the category of constructible \'etale motives with
bounded support on the corresponding perfect Hecke stack. These
categories are defined in Section \ref{Section: Hecke stack} from finite-level
quotient stacks using the extension of $h$-motives to perfect prestacks
\cite[Appendix A.1]{vandenHoveSplittingModels}. As the geometric objects in Theorem \ref{Theorem: Introduction} are defined over fields, they agree with the
bounded ULA categories as shown by Preis in
\cite[Proposition 3.2.21]{preisUlaMotives}. The underlying finite-type theory
of \'etale motives is due to Ayoub \cite{AyoubEtaleRealizationENS}.
The equivalence is canonical for the fixed Pappas--Zhu data and is established without recourse to any realization.

As mentioned already, Theorem \ref{Theorem: Introduction} extends the split \'etale comparison of
Bando \cite[Theorem 1.1]{bandoComparison} in both the ramified and the motivic directions. Its
compatibility with \'etale realization gives the following consequence.
\begin{proposition}[\'Etale realization and Satake comparison]
\label{Proposition: Introduction Satake comparison}
    Let $\ell\ne p$ and let $x_{0}$ be a special vertex in the Bruhat--Tits building of $G$ over $\breve{F}$. The $\Z_\ell$-adic realization of
    the equivalence in Theorem \ref{Theorem: Introduction} induces a monoidal equivalence
    \[
        \bigl(\Sat(\Hk{k}^{\mathrm{eq}},\Z_\ell),\star\bigr)
        \simeq
        \bigl(\Sat(\Hk{k}^{\mathrm{Witt}},\Z_\ell),\star\bigr),
    \]
    which is symmetrical monoidal for the symmetry constraints on both sides imported from the Satake equivalences of \cite{ModularRamifiedSatake} and \cite{vandenHoveRamifiedMotivicSatake}.
\end{proposition}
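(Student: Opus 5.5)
We realize the equivalence of Theorem \ref{Theorem: Introduction} with coefficients $\Z_\ell$, restrict it to the special vertex $x_0$, and identify the resulting functor with a fibre functor comparison.

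\textbf{Step 1: realization.} The $\ell$-adic realization is compatible with $!$- and $*$-pullback and pushforward along the finite-level quotient stacks. The equivalence of Theorem \ref{Theorem: Introduction} is built from such functors, namely motivic nearby cycles along Bando's family together with restriction to the two special fibres. Hence it induces an equivalence of constructible $\Z_\ell$-sheaf categories on the Hecke stacks for $x_0$ and its counterpart $x_0^\flat$. This equivalence is monoidal for convolution. It also preserves Schubert stratifications, since both sides are indexed by the same Iwahori--Weyl double cosets, and it is $t$-exact for the perverse $t$-structures. Exactness follows because nearby cycles are perverse $t$-exact and because the restriction to each fibre is an equivalence on the generic and special strata. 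Therefore it restricts to an equivalence of the hearts of ULA/bounded perverse sheaves, $\Sat(\Hk{k}^{\mathrm{eq}},\Z_\ell)\simeq\Sat(\Hk{k}^{\mathrm{Witt}},\Z_\ell)$, and this equivalence is monoidal.

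\textbf{Step 2: compatibility with fibre functors.} The family is proper over the base. Proper base change shows that nearby cycles commute with total cohomology $R\Gamma(\Gr,-)$. Consequently the equivalence intertwines the two fibre functors $H^*$ of \cite{ModularRamifiedSatake} and \cite{vandenHoveRamifiedMotivicSatake}, compatibly with the monoidal structures. Both symmetry constraints are obtained from the commutativity constraint of the graded fibre functor by the usual sign modification, via fusion in the Beilinson--Drinfeld setting. So it suffices to show that the monoidal equivalence is compatible with the commutativity constraint of $H^*$ on each side. Equivalently, we must show that the transported fibre functor is a tensor fibre functor for the imported symmetric structures.

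\textbf{Step 3: symmetry, the main obstacle.} Fusion is not directly available in Bando's family. I would instead use rigidity. Both categories are identified, via Tannakian reconstruction, with representations of the same group scheme $(\widehat{G}^{I})$ over $\Z_\ell$. The monoidal equivalence commutes with the fibre functors, so it is given by a monoidal automorphism of $\mathrm{Rep}$, that is, a bialgebra automorphism compatible with $H^*$. Any two symmetric structures with the same underlying monoidal category and fibre functor, each making the fibre functor symmetric, differ by a central element of order dividing $2$. To pin down this element, I compare both structures on the generators: minuscule and quasi-minuscule objects, or after inverting $\ell$, the Weyl modules. On these generators the constraint is determined by the parity of the cohomological grading, and this parity is preserved because the equivalence respects the grading of $H^*$. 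Finally, $\Z_\ell$-flatness together with faithfulness of $-\otimes\mathbb{Q}_\ell$ on the flat generators lets us pass from $\mathbb{Q}_\ell$ back to $\Z_\ell$.
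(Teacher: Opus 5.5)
Your proposal has a genuine gap in Step 2, and Step 3 takes that gap as its input.

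\textbf{The gap in Step 2.} Proper base change, i.e.\ nearby cycles commuting with proper pushforward along the ind-proper family, only gives an isomorphism of the \emph{underlying} functors $H^{*}\circ\mcB\simeq H^{*}$. The tensor structure on the fibre functor in \cite{ModularRamifiedSatake} and \cite{vandenHoveRamifiedMotivicSatake} is not produced by base change along Bando's family. You flag that there is no fusion in this family for the commutativity constraint, but the same obstruction already affects the monoidal structure of $H^{*}$ itself. So ``consequently \dots compatibly with the monoidal structures'' is unsupported. Your Step 3 then starts from exactly this unproved claim (``the monoidal equivalence commutes with the fibre functors'').

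What is missing is a tool that lives uniformly over $\Divtil$. The paper supplies it: the constant-term functors $\CT_{\mcG^{+},\mcG}$, built on the whole family. They commute with arbitrary base change in $T$ via Braden (Lemma \ref{lem: properties of naive constant term}), are conservative for regular $\lambda$, and detect $\Sat$ (Lemma \ref{Lemma: recoginzing Satake via constant term}). The paper identifies the realized functor with the ramified extension of Bando's functor. Given the constant-term lemmas, it then runs the argument of \cite[Theorem 6.6]{bandoComparison}. That argument both shows $\Sat$ is preserved and identifies the two total-cohomology fibre functors.

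\textbf{Step 3 once the gap is filled.} Once you have $H^{*}_{1}\circ\mcB\simeq H^{*}_{0}$ as monoidal functors, Step 3 is unnecessary. A monoidal functor that is monoidally isomorphic to a symmetric one is itself symmetric. Since $H^{*}_{1}$ is faithful and symmetric for the imported constraint, $\mcB$ must respect the constraints. No rigidity, generators or parity argument is needed.

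Your assertion that two symmetric structures making the same faithful fibre functor symmetric ``differ by a central element of order dividing $2$'' is false. By faithfulness they coincide. A sign only appears when you compare a Koszul-graded target with an ungraded one.

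\textbf{Two smaller points in Step 1.}
\begin{itemize}
\item Realization is neither full nor essentially surjective. So the motivic equivalence does not by itself induce an equivalence of $\Z_\ell$-sheaf categories. You must run the same stratum-wise nearby-cycles and recollement construction with $\Z_\ell$-sheaves and check that it is compatible with realization.
\item Over $\Z_\ell$, the Satake category consists of \emph{flat} perverse sheaves. Besides perverse $t$-exactness you therefore also need compatibility with $-\otimes^{L}\mathbb{F}_{\ell}$.
\end{itemize}
Apart from these points, using perverse $t$-exactness of nearby cycles on ULA objects is a reasonable alternative to the constant-term recognition of perversity.
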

This comparison of Satake categories extends \cite[Theorems 1.5 and
1.6]{bandoComparison} from split to quasi-split tamely ramified groups.
\begin{Remark}
    Let us comment on the group theoretical hypothesis. 
    Our result is proven for Hecke stacks over an algebraically closed field and this forces the group used to construct these stacks to be quasi-split.
    We think it is possible that a finer analysis of our argument could descend our results down to $k_{F}$ and therefore avoid the restriction to quasi-split groups.
    Tame ramification however is a more serious restriction because for wildly ramified groups we do not know how to construct an extension of the $F$-group to a reductive group scheme over $\mcO_{F}[u^{\pm}]$ that can be used to define the Bando Grassmannian (cf.\cite{JoaoAffGrassZ}).
\end{Remark}

\begin{Remark}
    While finalizing this paper we became aware of the paper \cite{YunZhu} by Yun--Zhu in which they prove the analogue of our main Theorem \ref{Theorem: Introduction} for étale sheaves by a different method.
    An extension to étale motives, however, is not discussed in their paper. 
\end{Remark}

We now explain how the geometry of Bando's family and the use of the motivic nearby cycles functor combine to prove Theorem
\ref{Theorem: Introduction}.

\subsection{Method of proof}
Let $\mcG$ be the group scheme constructed by Pappas--Zhu in \cite[Theorem 3.1]{PappasZhuLocalModels} starting from the data of $G$, the facet $\mcF$ in the Bruhat--Tits building $B(G;F)$ and the rigidification of $G$. The scheme $\mcG$ is a smooth affine group scheme
with connected fibers over $\mcO_F[u]$ that interpolates between the parahoric models $\mcP$ and $\mcP^{\flat}$ from above. Bando's construction
\cite{bandoComparison} yields a well-behaved family of affine Grassmannians of $\mcG$
\[
    \Gr^B_{\mcG}\longrightarrow\Divtil.
\]
The space $\Divtil$ parametrizes certain pairs $(\xi_{1},\xi_{2})$ of elements that form a Koszul regular sequence in $W_{\mcO_{F}}(R)[\![t]\!]$ and which cut out $\Spec(R)$ inside $\Spec(W_{\mcO_{F}}(R)[\![t]\!])$.
Taking fibers of $\Gr^B_{\mcG}$ at the distinguished
points $(\pi,t)$ and $(t,\pi)$ gives the equal-characteristic and Witt-vector
affine Grassmannians, respectively. Taking the quotient by $L^+\mcG$ gives a
family
\[
    \Hk{}=[L^+\mcG\backslash\Gr^B_{\mcG}]
    \longrightarrow\Divtil
\]
whose geometric distinguished fibers are the two Hecke stacks in Theorem
\ref{Theorem: Introduction}.

The proof of Theorem \ref{Theorem: Introduction} has two ingredients. The first is a study of the geometry of the Bando affine Grassmannian for Pappas--Zhu group schemes. This requires bounds by finite unions of Schubert varieties that are proper over the base, together with a description of each stratum as a classifying stack of a smooth group over the base.
Representability and ind-properness are proved in Proposition
\ref{Proposition: representability Bando affine Grassmannian} and Lemma
\ref{Lemma: Bando affine Grassmannian ind-projective for Pappas--Zhu group schemes}.
The orbit decomposition and the description of the stabilizers are Lemmas
\ref{Lemma: Iwahori Weyl group and double orbits} and
\ref{lemma: structure of Orbits in Bando affine Grassmannian}.
Let us note here that these constructions (stratification into Schubert strata and description of open strata) and properties (properness and smoothness of open strata) naturally hold \emph{over} $\Divtil$; this explains why the Bando affine Grassmannian is indeed a well-behaved family. 

The second ingredient is the replacement of the étale contractibility of spectra of strictly henselian rings, essential to Bando's arguments, by motivic nearby cycles and its compatibilities with ULA motives.
For this, we consider the path
\[
    X_e=D((1-x)^{2e}+x^{2e})\subset\mathbb A^{1,\perf}_{k},
\]
with the morphism to $\Divtilk$ given by $x\mapsto ([1-x]^{e} \pi - [x]^{e}t, [x]^{e}\pi+[1-x]^{e} t)$. 
Here $e$ is the ramification index of a tame splitting field $\widetilde{F}/F$ of $G$; in particular, the choice of the path depends on the group $G$. The endpoints
$0,1:\Spec k\to X_e$ of the path in $\Divtilk$ are $(\pi,t)$ resp. $(-t,\pi)$\footnote{The $-$ in front of $t$ is a curious artefact of the proof but the fiber over it is still the Witt vector Hecke stack.}, respectively. The
reflection $x\mapsto 1-x$ exchanges the points $0$ and $1$. 
Let $S_{0}$ resp. $S_{1}$ be the perfected strict henselizations of $X_e$ at $0$ resp. $1$ and let us denote by $S_{0,\eta}$ and $S_{1,\eta}$ the base change to the generic point $\eta$ of $X_e$.
Then we employ the above path to prove a generic equivalence 
$$
\rho_{\eta}\colon \DA{\ula}(\Hk{S_{0,\eta}})\simeq \DA{\ula}(\Hk{S_{1,\eta}})
$$
which is monoidal for the convolution product.
This is Proposition \ref{prop: generic equivalence}.
This generic equivalence serves as the stepping stone to prove the desired equivalence in Theorem \ref{Theorem: Introduction} using properties of motivic nearby cycles and ULA objects.
This is Theorem \ref{Theorem: Main}. Let us note however that extra care has to be taken to make our equivalence monoidal (even on the $\infty$-categorical level) and this is worked out in \S \ref{ss: monoidality}.
\subsection{Acknowledgments}
While working on this project, S. Bartling was part of the DFG RTG 2553.
The authors thank Thibaud van den Hove for very helpful comments on a draft of this article.
R{\i}zacan \c{C}ilo\u{g}lu was funded by the DFG project 541511946 and acknowledges support from DFG RTG 2553.
He wishes to thank Can Yaylali and Simon Pepin Lehalleur for fruitful discussion about this work.
\subsection{AI Disclosure}
Generative AI was used to edit this text. 
Furthermore, we let ChatGPT 5.6 Sol proof-read a previous version of our article and it brought an error to our attention. In a second version, Claude Opus 5 pointed out a further error and, in the same response, noted an isomorphism similar to the one eventually appearing in Lemma \ref{lem: comparing Bando rings in the generic equivalence}(a). The authors then realized that Lemma \ref{lem: comparing Bando rings in the generic equivalence} should hold and found the proof of Proposition \ref{prop: generic equivalence} without further AI assistance. The authors take full responsibility for the correctness of the mathematical content of this article.
\subsection*{Notation}
We use the following notation.

	Arithmetic:
\begin{itemize}
	\item $k$ a field of positive characteristic $p$.
	\item $F/\mathbb{Q}_{p}$ a finite extension with ring of integers $\mcO_F$ and residue field $k_{F}$ with $q$ elements.
	Often we will abbreviate $\mcO=\mcO_F$.
	\item $\breve{F}$ the completion of the maximal unramified extension of $F$ inside some fixed algebraic closure $\overline{F}$.
\end{itemize}
	Cohomological:
	\begin{itemize}
		\item $\Lambda$ a commutative ring which is flat over $\mbZ$ and in
        which $p$ is invertible.
        \item For a finite-dimensional Noetherian scheme $Y$, $\DA(Y)$
        denotes the stable $\infty$-category
        $\DA_{\acute{e}t}(Y,\Lambda)$ of \'etale motives considered in
        \cite[Section 1.2]{preisUlaMotives}. The canonical functor
        \[
            \DA_{\acute{e}t}(Y,\Lambda)
            \longrightarrow \mathrm{DM}_{h}(Y,\Lambda)
        \]
        is a symmetric monoidal equivalence compatible with the six
        operations by \cite[Theorem 1.2.15]{preisUlaMotives}. Preis's
        comparison applies, in particular, to schemes finitely presented
        over a finite-dimensional Noetherian base. Over a perfect field, we
        also use the extension of $\mathrm{DM}_{h}$ from finite-type schemes
        to perfectly finitely presented schemes and perfect prestacks
        constructed in
        \cite[Proposition 2.2 and Section 2.2]{vandenHoveRamifiedMotivicSatake}
        and, using Preis's comparison on finite-type schemes, denote the
        resulting categories again by $\DA(Y)$.
        \item For a morphism $f\colon Y\rightarrow T$ of schemes finite type over a field, we write $\DA{\ula}(Y/T)$ for the category of étale motivic sheaves on $Y$ that are ULA with respect to $f\colon Y\rightarrow T$ in the sense of \cite[Definition 3.2.2]{preisUlaMotives}. In particular,
        \[
        \DA\ula(T/T)=\DA\rig(T)
        \]
        are the dualizable objects,
        see \cite[Remark 3.2.4]{preisUlaMotives}.
        For a perfectly finitely presented morphism $f\colon Y\rightarrow T$ of perfect schemes over a perfect field $k$ or a morphism of pre-stacks on perfect $k$-schemes, we will use the same symbols but use the six functor formalism discussed in the previous point to give sense to this.
        Finally, when $k$ is an algebraically closed field, we write $\Psi_f$ for the total motivic nearby cycles functor constructed by Ayoub and extended to the $\infty$-categorical set-up we work in by Preis in\cite[Section 1.4]{preisUlaMotives}; again, we denote by the same symbol the extension to pre-stacks on perfect $k$-schemes.
        For a perfect pre-stack $Y$ over a perfect field $k$, we denote by $\DA_{c}(Y)$ the $\infty$-category of constructible étale motives as constructed by van den Hove \cite[Appendix A.1]{vandenHoveSplittingModels}.
        It follows from \cite[Proposition 3.2.21]{preisUlaMotives} that $\DA_{c}(Y)=\DA{\ula}(Y/k)$.
\end{itemize}

\section{Recollections on the Bando affine Grassmannian}
Let $\Perf_{k_{F}}$ be the category of perfect $k_{F}$-schemes.
As introduced by Bando, let $\Divtil\colon \Perf_{k_{F}}^{\op}\rightarrow \Set$ be the étale sheaf whose value on $\Spec(R)\in \Perf_{k_{F}}$ is given by
$$
\Divtil(R)=\lbrace (\xi_{1},\xi_{2})\in W_{\mcO}(R)[\![t]\!]^{2}\colon \xi_{1}=[a]\pi+[b]t,\xi_{2}=[c]\pi+[d]t, ad-bc\in R^{*} \rbrace.
$$
Note that $\Divtil\simeq \GL_{2,k_{F}}^{\perf}$ via the obvious map.
For a perfect field $k$ that is an extension of $k_{F}$, we will write $\Divtilk$ for the restriction of $\Divtil$ to perfect $k$-algebras. 

Let $(\xi_{1},\xi_{2})\in \Divtil(R).$
Then $\xi_{1},\xi_{2}$ form a Koszul regular sequence in $W_{\mcO}(R)[\![t]\!]$ and we have $W_{\mcO}(R)[\![t]\!]/(\xi_{1},\xi_{2})\simeq R.$
Namely, by definition, we find for $\xi_{1},\xi_{2}\in \Divtil(R)$ an invertible matrix $A\in \GL_{2}(W_{\mcO}(R)[\![t]\!])$ such that $A\cdot (\pi,t)=(\xi_{1},\xi_{2})$ and then for the Koszul complexes we have
$$
\Kos(W_{\mcO}(R)[\![t]\!];\xi_{1},\xi_{2})\simeq \Kos(W_{\mcO}(R)[\![t]\!],\pi,t)
$$
(\cite[Tag 0625]{stacks}) and one checks that $(\pi,t)$ forms a regular sequence in $W_{\mcO}(R)[\![t]\!]$.
Let us write $B^{+}_{\xi_{1},\xi_{2}}(R):=\widehat{(W_{\mcO}(R)[\![t]\!]/(\xi_{1}))}_{(\xi_{2})}\simeq W_{\mcO}(R)[\![t]\!]/(\xi_{1})$ (\cite[Remark 2.3]{bandoComparison}) and $B_{\xi_{1},\xi_{2}}(R):=B^{+}_{\xi_{1},\xi_{2}}(R)[1/\xi_{2}].$
These are $\mcO[\![u]\!]$-algebras via $u\mapsto \xi_{2}.$
Although the rings $B^{+}_{\xi_{1},\xi_{2}}(R),$ $B_{\xi_{1},\xi_{2}}(R)$ resp. the $\mcO[\![u]\!]$-algebra structure depend on the choice of $(\xi_{1},\xi_{2})$ (resp. $\xi_{2}$), in the following we will just write $B^{+}(R)=B^{+}_{\xi_{1},\xi_{2}}(R)$ and $B(R)=B_{\xi_{1},\xi_{2}}(R)$ when no confusion can arise.

Let $\mcG$ be a fiberwise connected, smooth affine group scheme over $\mcO[\![u]\!].$
Then $L^{+}\mcG\rightarrow \Divtil$ and $L\mcG\rightarrow \Divtil$ are defined by $L^{+}\mcG(R)=\mcG(B^{+}(R))$ and $L\mcG(R)=\mcG(B(R)).$
For future use, we also collect the following statement.
\begin{Lemma}\label{Lemma: Quoient by congruence subgroup base changed from a field}
    Let $(L^{+}\mcG)^{\geq m}\subseteq L^{+}\mcG$ be the subgroup sheaf $(L^{+}\mcG)^{\geq m}=\ker(\mcG(B^{+}(R))\rightarrow \mcG(B^{+}(R)/\xi_{2}^{m})).$
    Then there are natural isomorphisms
    $$
    L^{+}\mcG/(L^{+}\mcG)^{\geq 1} \simeq \overline{\mcG} \times_{\Spec(k_{F})} \Divtil
    $$
    and
    $$
    (L^{+}\mcG)^{\geq m}/(L^{+}\mcG)^{\geq m+1}\simeq (\Lie(\overline{\mcG})\times_{\Spec(k_{F})} \Divtil)\lbrace m \rbrace,
    $$
    where $m\geq 1$, $\overline{\mcG}=\mcG\otimes_{\mcO[u]}k_{F}$ and we denote by $\lbrace m \rbrace$ the twist by $\xi_{2}^{m}B^{+}(R)/\xi_{2}^{m+1}B^{+}(R).$
    Here the map $\mcO[u]\rightarrow k_{F}$ is given by $u\mapsto 0$ and by taking the quotient by $(\pi_{F})$.
\end{Lemma}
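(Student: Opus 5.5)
The plan is to compute $B^{+}(R)/\xi_{2}$ explicitly, and then use smoothness of $\mcG$ to identify the successive quotients of the congruence filtration.

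\textbf{Step 1: the residue ring.} From the Koszul discussion above, $B^{+}(R)/\xi_{2}\simeq W_{\mcO}(R)[\![t]\!]/(\xi_{1},\xi_{2})\simeq R$. I will make the $\mcO[\![u]\!]$-algebra structure on $R$ explicit:
\begin{itemize}
\item $u$ goes to the image of $\xi_{2}$, which is $0$;
\item $\pi$ goes to $0$, because $\pi$ lies in the ideal $(\xi_{1},\xi_{2})=(\pi,t)$, the two ideals being equal since $A$ is invertible.
\end{itemize}
So $\mcO[\![u]\!]\to R$ factors through $k_{F}\to R$. This gives a functorial identification
\[
\mcG(B^{+}(R)/\xi_{2})=\overline{\mcG}(R).
\]

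\textbf{Step 2: the quotient $L^{+}\mcG/(L^{+}\mcG)^{\geq 1}$.} It remains to see that $\mcG(B^{+}(R))\to \mcG(R)$ is surjective, at least étale-locally on $R$. Here $B^{+}(R)$ is $\xi_{2}$-adically complete, since it is defined as a completion, and $\mcG$ is smooth. So I lift successively through the square-zero thickenings $B^{+}/\xi_{2}^{m+1}\to B^{+}/\xi_{2}^{m}$, using formal smoothness of $\mcG$ (in fact $\mcG$ is affine, so the lifts exist globally), and pass to the limit. By definition of $(L^{+}\mcG)^{\geq 1}$ this gives the first isomorphism of sheaves over $\Divtil$.

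\textbf{Step 3: the graded pieces, $m\geq1$.} For $m\geq 1$ I use the standard deformation-theoretic description. The ideal $I=\xi_{2}^{m}B^{+}/\xi_{2}^{m+1}B^{+}$ in $B^{+}/\xi_{2}^{m+1}$ squares to zero, since $2m\geq m+1$. Hence
\[
\ker\bigl(\mcG(B^{+}/\xi_{2}^{m+1})\to \mcG(B^{+}/\xi_{2}^{m})\bigr)\simeq \Lie(\mcG)\otimes I .
\]
Because $I$ is killed by $\xi_{2}$, this equals $\Lie(\overline{\mcG})\otimes_{R} I$ with $R$ a $k_{F}$-algebra as in Step 1. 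Surjectivity of $(L^{+}\mcG)^{\geq m}\to$ this kernel again follows from smoothness and completeness, exactly as in Step 2. I also need $I$ to be an invertible $R$-module, so that the twist $\lbrace m\rbrace$ makes sense; this holds because $\xi_{2}$ is a nonzerodivisor on $B^{+}=W_{\mcO}(R)[\![t]\!]/(\xi_{1})$ by Koszul regularity. All maps are functorial in $R$, which gives the natural isomorphisms.

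\textbf{Main obstacle.} I expect the subtle point to be Step 1: checking that the composite $\mcO[u]\to R$ really is reduction to $k_{F}$ with $u\mapsto0$, uniformly over $\Divtil$. This relies on $\pi\in(\xi_{1},\xi_{2})$, which I will deduce from the invertible matrix $A$.
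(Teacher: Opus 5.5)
Your proposal is correct and takes essentially the same approach as the paper. The paper only says the proof is identical to Bando's Lemma 2.7, which is the standard argument you give: identify $B^{+}(R)/\xi_{2}\simeq R$, lift successively using smoothness of $\mcG$ and $\xi_{2}$-adic completeness of $B^{+}(R)$, and identify the square-zero graded kernels with $\Lie(\overline{\mcG})$ twisted by $\xi_{2}^{m}B^{+}(R)/\xi_{2}^{m+1}B^{+}(R)$. Your Step 1 check, that $\mcO[u]\to B^{+}(R)/\xi_{2}$ kills both $u$ and $\pi$ because $(\xi_{1},\xi_{2})=(\pi,t)$, is exactly the extra input needed to pass from Bando's constant split group to the Pappas--Zhu group scheme over $\mcO[u]$.
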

The proof is identical to \cite[Lemma 2.7]{bandoComparison}.
We obtain Bando's affine Grassmannian $\Gr^{B}_{\mcG}=(L\mcG/L^{+}\mcG)_{\et}$ (étale sheafification). It is an étale sheaf on $\Perf_{k_{F}}$ over $\Divtil.$
The next statement is essentially contained in \cite{bandoComparison}.
\begin{proposition}[Bando]\label{Proposition: representability Bando affine Grassmannian}
    $\Gr^{B}_{\mcG}$ is representable by the increasing union along closed immersions of perfect schemes over $\Divtil$ that are perfectly of finite presentation and perfectly quasi-projective.
\end{proposition}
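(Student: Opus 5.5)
The plan is to reduce to the case $\mcG=\GL_{n}$ by a closed embedding, following the strategy of Bando (and Zhu, Bhatt--Scholze for the Witt vector affine Grassmannian).

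\emph{Step 1: choose a closed embedding.} Since $\mcG$ is a smooth affine group scheme over $\mcO[\![u]\!]$, we may choose a faithful representation $\mcG\hookrightarrow \GL_{n,\mcO[\![u]\!]}$ that is a closed immersion with quasi-affine quotient $\GL_{n}/\mcG$. For Pappas--Zhu group schemes this is available over $\mcO[u]$ by \cite{PappasZhuLocalModels}, and in general it follows from the standard argument over a two-dimensional regular base. If needed, one can pass through $\GL_{n}\times\mathbb{G}_m$ so that the quotient is affine.

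\emph{Step 2: treat $\GL_{n}$ via lattices.} For $\mcG=\GL_{n}$, the sheaf $\Gr^{B}_{\GL_n}(R)$ classifies finite projective $B^{+}(R)$-submodules $\Lambda\subset B(R)^{n}$ with $\Lambda[1/\xi_{2}]=B(R)^{n}$. Bounding by $\xi_{2}^{N}B^{+}(R)^{n}\subset\Lambda\subset\xi_{2}^{-N}B^{+}(R)^{n}$ exhausts the sheaf. Each bounded piece embeds as a closed subfunctor of a perfected Grassmannian of $R$-submodules of the finite projective $R$-module $\xi_{2}^{-N}B^{+}(R)^{n}/\xi_{2}^{N}B^{+}(R)^{n}$ that are stable under the nilpotent operator $\xi_{2}$.

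The key input here is that $B^{+}(R)/\xi_{2}^{m}$ is finite projective over $R$ of rank $m$. This should follow from the Koszul-regularity of $(\xi_1,\xi_2)$ and $W_{\mcO}(R)[\![t]\!]/(\xi_1,\xi_2)\simeq R$, by d\'evissage along the filtration by powers of $\xi_2$, exactly as in Lemma \ref{Lemma: Quoient by congruence subgroup base changed from a field}. One also has to check that a submodule of the quotient with projective cokernel lifts to a projective $B^{+}(R)$-lattice; this uses that $B^{+}(R)$ is $\xi_2$-adically complete and $\xi_2$-torsion free. The result is that $\Gr^{B}_{\GL_n}$ is an increasing union of perfectly finitely presented, perfectly projective schemes over $\Divtil$. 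These schemes are projective over $\Divtil\simeq\GL_{2}^{\perf}$, which is affine, so they are quasi-projective.

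\emph{Step 3: deduce the general case.} Show that the induced map $\Gr^{B}_{\mcG}\to\Gr^{B}_{\GL_n}$ is a locally closed immersion, and a closed immersion when $\GL_n/\mcG$ is affine. The standard argument works: a point of $\Gr^B_{\GL_n}$ is a $\GL_n$-torsor $\mathcal{E}$ over $B^{+}(R)$ with a trivialization over $B(R)$. Lifting it to $\mcG$ amounts to the section of $\mathcal{E}/\mcG$ over $B(R)$ extending to $B^{+}(R)$. Since $\GL_n/\mcG$ is affine, this is a condition of the form ``certain elements of $B(R)$ lie in $B^{+}(R)$''. Coefficient-wise in powers of $\xi_2$, these are closed conditions on $R$.

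Two technical points come up here. First, $\mcG$-torsors over $B^{+}(R)$ that are trivial on $B(R)$ should be étale-locally trivial on $R$; this uses smoothness of $\mcG$, Henselianity of $B^{+}(R)$ along $\xi_2$, and Lemma \ref{Lemma: Quoient by congruence subgroup base changed from a field}. Second, the image should be an étale sheaf, so that it matches the sheafification.

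\emph{Main obstacle.} I expect the main obstacle to be Step 2: showing that $B^{+}(R)/\xi_2^m$ is finite projective over $R$ and that the lattice moduli is representable uniformly over all of $\Divtil$. The rings $B^{+}$ vary between power series and Witt-type rings, so the usual Witt vector arguments of Bhatt--Scholze (v-descent of vector bundles) must be adapted. Here I would follow Bando's treatment and use perfectness together with the $\GL_{2}$-translation $A\cdot(\pi,t)=(\xi_1,\xi_2)$, reducing to the coordinates $(\pi,t)$ wherever possible.
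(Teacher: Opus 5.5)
Your architecture is the paper's: a closed embedding $\mcG\hookrightarrow\GL_n$ with quasi-affine quotient from \cite{PappasZhuLocalModels}, the $\GL_n$ case from Bando, and the descent argument of \cite[Proposition 1.2.6]{ZhuIntroAffGrass}. However, your own sketch of Step 2 does not work. In general $B^{+}(R)/\xi_2^{m}$ carries no $R$-algebra structure. At the point $(\xi_1,\xi_2)=(t,\pi)$ one has $B^{+}(R)=W_{\mcO}(R)$ and $\xi_2=\pi$. Once $m$ exceeds the absolute ramification index of $F$, $p\neq 0$ in $W_{\mcO}(R)/\pi^{m}$, so there is no ring map from $R$ to it, let alone one making it finite projective of rank $m$. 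Your d\'evissage along powers of $\xi_2$ only gives a finite filtration whose graded pieces are invertible $R$-modules. An iterated extension of $R$-modules inside $B^{+}(R)$-modules need not be an $R$-module. So the bounded lattice pieces do not sit inside a classical (perfected) Grassmannian over $R$. This is exactly why the Witt vector affine Grassmannian needs the methods of Zhu and Bhatt--Scholze. The $\GL_2$-translation does not help either: $A$ identifies the ideals $(\xi_1,\xi_2)=(\pi,t)$, but $B^{+}(R)=W_{\mcO}(R)[\![t]\!]/(\xi_1)$ depends on $\xi_1$ itself, so nothing reduces to $R[\![t]\!]$. The paper does not reprove this step: it takes the split case, in particular $\GL_n$, from \cite[Theorem 3.26]{bandoComparison}, and you should do the same.

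Second, your remark that one can make the quotient affine (e.g.\ via $\GL_n\times\mathbb{G}_m$) is false whenever the special fibre of $\mcG$ is not reductive, e.g.\ at Iwahori level. Formation of the flat quotient commutes with base change, and a homogeneous space of a reductive group over a field is affine only if the stabilizer is reductive. So Step 3 must use the quasi-affine quotient, as in \cite[Proposition 1.2.6]{ZhuIntroAffGrass}:
\begin{itemize}
\item Trivialize the $\GL_n$-torsor locally on $R$ and embed $\GL_n/\mcG$ as a quasi-compact open in an affine finite type scheme $\overline{X}$.
\item Extending the given point of $\overline{X}(B(R))$ to $B^{+}(R)$ is the closed condition you describe.
\item Landing in $\GL_n/\mcG$ is then an open condition on $\Spec R$. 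Indeed, the preimage of $\overline{X}\setminus(\GL_n/\mcG)$ is a closed subset of $\Spec B^{+}(R)$ avoiding $\Spec B(R)$, hence contained in $V(\xi_2)=\Spec R$.
\end{itemize}
This gives only a locally closed immersion $\Gr^{B}_{\mcG}\to\Gr^{B}_{\GL_n}$. That is precisely why the statement asserts perfect quasi-projectivity; ind-properness is proved separately in the later lemma on ind-properness of $\Gr^{B}_{\mcG}\to\Divtil$.
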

\begin{proof}
    In case $\mcG$ is split, this is due to Bando \cite[Theorem 3.26]{bandoComparison}.
    In general, one may use \cite[Corollary 10.7]{PappasZhuLocalModels} to find a closed immersion $\mcG\hookrightarrow \GL_{n},$ such that the flat quotient $\GL_{n}/\mcG$ is representable by a quasi-affine scheme.
    This implies the desired result arguing as in \cite[Proposition 1.2.6]{ZhuIntroAffGrass}.
\end{proof}
In the rest of this section, we continue to analyze this affine Grassmannian for Pappas--Zhu group schemes $\mcG$ over $\mcO[u]$ as constructed in \cite{PappasZhuLocalModels}.
Our aim is to produce as usual a stratification indexed by elements of an appropriate Iwahori--Weyl group.
\subsection{Recollections on the Pappas--Zhu group scheme}\label{Subsection: Recollections on PZ group schemes}
In this section, we recall the construction due to Pappas--Zhu of extensions of parahoric models of tamely ramified reductive groups over $F$ to smooth fiberwise connected affine group schemes $\mcG$ over $\mcO[u]$.
Furthermore, we add some complements needed in our situation.

We work in the following setting.
\begin{Situation}\label{Situation: tamely ramified group}
Let $G$ be a connected reductive $F$-group.
We assume that there exists a tamely ramified splitting field for $G.$
In our set-up and notation we mostly follow Pappas--Zhu and
a reference for the following facts and unexplained notation is \cite[Section 1.d.2]{PappasZhuLocalModels}.
For the convenience of the reader and to fix notation, let us spell out what we need here:
By Steinberg's theorem \cite[Chapitre III, 2.3]{SerreCohoGalois}, $G_{\breve{F}}$ is quasi-split. We may then make the following additional assumptions:
There exists a finite Galois extension $\widetilde{F}/F$ contained in our fixed algebraic closure $\overline{F}$ splitting $G$. Denote by $F\subseteq F_{0}\subseteq \widetilde{F}$ the maximal unramified subextension with ring of integers $\mcO\subseteq \mcO_{0}\subseteq \widetilde{\mcO}$. 
We require that
\begin{enumerate}
    \item[$\bullet$] $G_{F_{0}}$ is quasi-split,
    \item[$\bullet$] $\Gamma=\Gal(\widetilde{F}/F)=\langle \sigma \rangle \ltimes \langle \gamma_{0} \rangle,$ where $\sigma$ is the $q$-power Frobenius generator of $\Gal(F_{0}/F)$ and $I=\langle \gamma_{0} \rangle\simeq \mathbb{Z}/e$ is the inertia group and $(e,p)=1$ and we have the relation $\sigma \gamma_{0}\sigma^{-1}=\gamma_{0}^{q},$
    \item[$\bullet$] there exists a uniformizer $\widetilde{\pi}\in \widetilde{\mcO}$ with $\widetilde{\pi}^{e}=\pi.$ 
\end{enumerate}
We will also consider the cover $\mcO[u]\rightarrow \mcO_{0}[v]$ given by $u\mapsto v^{e}.$
The group $\Gamma$ acts on $\mcO_{0}[v]$ via $\sigma$ on the coefficients and $\gamma_{0}$ acts by $v^{i}\mapsto \zeta^{i} v^{i},$ where $\zeta=\gamma_{0}(\widetilde{\pi})/\widetilde{\pi}\in \mcO_{0}$ (primitive $e$-th root of unity).
Recall that a \emph{rigidification} of $G$ (\cite[Definition 1.7]{PappasZhuLocalModels}) is the datum of $(G,A,S,P)$ where $A\subset G$ is an auxiliary maximal $F$-split torus, $A\subset S \subset G$ is a maximally $\breve{F}$-split torus defined over $F$ and $P$ is a minimal parabolic, which then contains $M=\Cent_{G}(A)$. 
Let $T=\Cent_{G}(S).$ 
This is a maximal torus of $G$ defined over $F$ since $G_{\breve{F}}$ is quasi-split. 
It splits over $\widetilde{F}$.
We will write $M=\Cent_{G}(A)$.

Let us furthermore fix some notation concerning split resp. quasi-split forms of $G$.
Let $(H,T_{H},B_{H},\varepsilon)$ be a split pinned reductive group over $\mathbb{Z}$ and assume $G\otimes_{F}\widetilde{F}\simeq H\otimes_{\mathbb{Z}}\widetilde{F}$.
We adjust this isomorphism such that it identifies the maximal torus $T$ of $G$ with the maximal split torus $T_{H}\otimes_{\mathbb{Z}}\widetilde{F}$ given by the pinning.
In this situation, we obtain an action of $\Gamma$ on the based root-datum $\Sigma_{H}=(X^{*}(T_{H}),\Delta,X_{*}(T_{H}),\Delta^{\vee})$.

This gives rise to a pinned quasi-split form $(G^{*},T^{*},B^{*},\varepsilon^{*})$ over $F$ of $H$; and therefore also of $G$ over $F$.
Concretely, we have $G^{*}=(\Res^{\widetilde{F}}_{F}(H_{\widetilde{F}}))^{\Gamma}$.
The group $G^{*}$ is an inner twist of $G$ over $F$ given by a class in $H^{1}(\Gamma, G^{*}_{\ad}(\widetilde{F}))$.
As assumed above, $G_{F_{0}}$ is quasi-split, so that we can realize this inner twist of $G$ over $F_{0}$ already.
By \cite[Corollary 1.10]{PappasZhuLocalModels}, there exists a unique class $[c^{\rig}]\in H^{1}(\widehat{\mathbb{Z}},N^{\prime *}(\breve{F}))$ whose image in $H^{1}(F, G^{*}_{\ad})$ gives the inner twist from above.
Let us write $\Int(g)\in M^{\prime *}(\breve{F})$ for the value of this cocycle at $1$.
Here $M^{\prime *}$ denotes the Levi subgroup of $G^{*}_{\ad}$ that corresponds to $M^{*}\subset G^{*}$ and $N^{\prime *}=N_{M^{\prime *}}(T^{*}_{\ad})$.
We will choose an inner isomorphism $\psi\colon G_{\breve{F}}\simeq G^{*}_{\breve{F}}$ such that the image of $\Int(g)$ under $M^{\prime *}(\breve{F})\rightarrow M^{*}_{\ad}(\breve{F})$ is described as in \cite[Proposition 1.14]{PappasZhuLocalModels}.
Then we obtain the description $G(R)=G^{*}(\breve{F}\otimes_{F}R)^{\widehat{\mathbb{Z}}}$, where the action of the topological generator $1\in \widehat{\mathbb{Z}}$ is given by $\Int(g)\cdot \sigma$ and $R$ is an $F$-algebra.
\end{Situation}

Now let us discuss some preliminaries concerning Bruhat--Tits buildings.

Let $K$ be a discretely valued henselian valuation field with valuation ring $K^{\circ}$ and perfect residue field.
Whenever we consider a field like $K,$ let us implicitly also choose an identification $v(K^{*})=\mathbb{Z}$ (in other words; we will also choose a uniformizer).
Let $B(H;K)$ be the (non-extended) Bruhat--Tits building of $H$ over $K$.
We have the apartment $\mcA(H,T_{H};K)$.
Under our assumptions, for any two discretely valued henselian valuation fields with perfect residue fields $K,K^{\prime}$ we have canonical identifications
\begin{equation}\label{eq: identification apartments split group}
    \mcA(H,T_{H};K)=\mcA(H,T_{H};K^{\prime}).
\end{equation}
For $k\in \Perf_{k_{F}}$ a field, $(\xi_{1},\xi_{2})\in \Divtil(k),$ the ring $B(k)$ is a discretely valued complete valuation field with fixed uniformizer $\xi_{2}\in B(k)$ and valuation ring $B^{+}(k)$ (see \cite[Lemma 2.6]{bandoComparison}).
We may then consider bounded convex subsets $\Omega\subset \mcA(H,T_{H};B(k))$ and transfer them under (\ref{eq: identification apartments split group}) for different choices of $(\xi_{1},\xi_{2})$.

Let $(\underline{G}^{*},\underline{T}^{*},\underline{B}^{*},\underline{\varepsilon}^{*})$ be the pinned quasi-split group over $\mcO[u^{\pm}]$ constructed by Pappas--Zhu in \cite[Section 2.c.1]{PappasZhuLocalModels}.
Concretely, it is given by $\underline{G}^{*}=(\Res^{\mcO_{0}[v^{\pm}]}_{\mcO[u^{\pm}]}H_{\mcO_{0}[v^{\pm}]})^{\Gamma}$ with the diagonal action $\tau(\gamma)\otimes \gamma,$ where $\tau\colon \Gamma\rightarrow \Aut_{\mcO}(H)$ comes from the action on the index root datum for $G.$

Assume that $\underline{G}=\underline{G}^{*}$ is quasi-split; it is split over the extension $\mcO_{0}[v^{\pm}]$ of $\mcO[u^{\pm}].$
Let $\underline{S}^{*}\subseteq \underline{T}^{*}$ be a maximal $\breve{\mcO}[u^{\pm}]$-split subtorus and $\underline{A}^{*}\subseteq \underline{T}^{*}$ be a maximal $\mcO[u^{\pm}]$-split subtorus.
Then $\Cent_{\underline{G}^{*}}(\underline{S}^{*})=\underline{T}^{*}$.
Let $C\in \Perf_{k_{F}}$ be algebraically closed and take $(\xi_{1},\xi_{2})\in \Divtil(C)$.
Note for later use that $B^{+}(C)$ is then an $\breve{\mcO}$-algebra.
We may consider the maximal $B(C)$-split torus $\underline{S}^{*}_{B(C)}=\underline{S}^{*}\otimes_{\mcO[u]}B(C).$
For different choices $(\xi_{1},\xi_{2}),(\xi^{\prime}_{1},\xi^{\prime}_{2})\in \Divtil(C)$, we obtain from (\ref{eq: identification apartments split group}) identifications of apartments in the respective buildings
\begin{equation}\label{eq: identification apartments quasi split groups}
\mcA(\underline{G}^{*}_{B_{(\xi_{1},\xi_{2})}(C)},\underline{S}^{*}_{B_{(\xi_{1},\xi_{2})}(C)};B_{(\xi_{1},\xi_{2})}(C))=\mcA(\underline{G}^{*}_{B_{(\xi^{\prime}_{1},\xi^{\prime}_{2})}(C)},\underline{S}^{*}_{B_{(\xi^{\prime}_{1},\xi^{\prime}_{2})}(C)};B_{(\xi^{\prime}_{1},\xi^{\prime}_{2})}(C)).
\end{equation}
This follows from \cite[Theorem 3.17]{PrasadTameDescentBT}; see also \cite[Remarque 3.4.2(2)]{JoaoAffGrassZ} for a more general statement.
Therefore, as before, we may transfer under (\ref{eq: identification apartments quasi split groups}) convex bounded subsets $\Omega$ contained in the apartment $\mcA(\underline{G}^{*}_{B_{(\xi_{1},\xi_{2})}(C)},\underline{S}^{*}_{B_{(\xi_{1},\xi_{2})}(C)};B_{(\xi_{1},\xi_{2})}(C))$
among different choices of $(\xi_{1},\xi_{2})$.

Finally, let $(G,A,S,P)$ be a rigidification of $G$.
We then have the maximal torus $T=\Cent_{G}(S)$ in $G.$
This data extends to $(\underline{G},\underline{A},\underline{S},\underline{P},\underline{T})$ by \cite[Section 2.c.4]{PappasZhuLocalModels}.
Let us note here that what we denoted by $G^{\flat}$ in the introduction is given by $G^{\flat}=\underline{G}\otimes_{\mcO[u^{\pm}]}k_{F}[u^{\pm}]$.
By construction, $\underline{G}$ is an inner twist of $\underline{G}^{*}$ given by a class $[\underline{c}^{\rig}]\in H^{1}(\widehat{\mathbb{Z}},\underline{N}^{\prime *}(\breve{\mcO}[u^{\pm}]))$. 
As before, let us denote by $\Int(\bold{g})$ the element in $\underline{M}^{\prime *}(\breve{\mcO}[u^{\pm}])$ which is the value at $1$ of this cocycle.

For $(\xi_{1},\xi_{2}),(\xi^{\prime}_{1},\xi^{\prime}_{2})\in \Divtil(k_{F}),$ we obtain identifications induced by (\ref{eq: identification apartments quasi split groups})
\begin{align*}\label{eq: identification apartments groups}
    \mcA(\underline{G}_{B_{(\xi_{1},\xi_{2})}(k_{F})},\underline{A}_{B_{(\xi_{1},\xi_{2})}(k_{F})}; B_{(\xi_{1},\xi_{2})}(k_{F})) & = \mcA(\underline{G}_{B_{(\xi_{1},\xi_{2})}(\overline{k_{F}})},\underline{S}_{B_{(\xi_{1},\xi_{2})}(\overline{k_{F}})}; B_{(\xi_{1},\xi_{2})}(\overline{k_{F}}))^{\sigma} \\
     & = \mcA(\underline{G}^{*}_{B_{(\xi_{1},\xi_{2})}(\overline{k_{F}})},\underline{S}^{*}_{B_{(\xi_{1},\xi_{2})}(\overline{k_{F}})}; B_{(\xi_{1},\xi_{2})}(\overline{k_{F}}))^{\Int(\bold{g})\cdot \sigma} \\
     & = \mcA(\underline{G}^{*}_{B_{(\xi_{1}^{\prime},\xi_{2}^{\prime})}(\overline{k_{F}})},\underline{S}^{*}_{B_{(\xi_{1}^{\prime},\xi_{2}^{\prime})}(\overline{k_{F}})}; B_{(\xi_{1}^{\prime},\xi_{2}^{\prime})}(\overline{k_{F}}))^{\Int(\bold{g})\cdot \sigma} \\
     & = \mcA(\underline{G}_{B_{(\xi_{1}^{\prime},\xi_{2}^{\prime})}(\overline{k_{F}})},\underline{S}_{B_{(\xi_{1}^{\prime},\xi_{2}^{\prime})}(\overline{k_{F}})}; B_{(\xi_{1}^{\prime},\xi_{2}^{\prime})}(\overline{k_{F}}))^{\sigma}\\
     & \subset \mcB(\underline{G}_{B_{(\xi_{1}^{\prime},\xi_{2}^{\prime})}(k_{F})};B_{(\xi_{1}^{\prime},\xi_{2}^{\prime})}(k_{F})).
\end{align*}
\begin{Theorem}[Pappas--Zhu, \cite{PappasZhuLocalModels}]\label{Theorem: PZ}
    Let $(G,A,S,P)$ be as before a rigidification of $G$ and let $\Omega\subset\mcA(G,A;F)$ be a bounded convex subset.
    There exists a smooth, affine group scheme $\mcG_{\Omega}\rightarrow \Spec(\mcO[u])$ with connected fibers, such that  
    \begin{enumerate}
    \item[$(a)$] $\G_{\Omega}\otimes_{\mcO[u]}\mcO[u^{\pm}]=\underline{G},$
    \item[$(b)$] for any $(\xi_{1},\xi_{2})\in \Divtil(k_{F})$, $\mcG_{\Omega}\otimes_{\mcO[u]}B^{+}_{(\xi_{1},\xi_{2})}(k_{F})=\mcP_{\Omega},$ where $\mcP_{\Omega}$ is the Bruhat--Tits group scheme (\cite[Theorem 8.3.13]{KalethaPrasadBruhatTits}) over $B^{+}_{(\xi_{1},\xi_{2})}(k_{F})$ associated to the transfer of $\Omega$ to a subset of  $\mcB(\underline{G}_{B_{(\xi_{1},\xi_{2})}(k_{F})};B_{(\xi_{1},\xi_{2})}(k_{F}))$ as constructed above.
\end{enumerate}
\end{Theorem}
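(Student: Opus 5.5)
The plan is to follow Pappas--Zhu's construction in \cite[Section 4]{PappasZhuLocalModels} and to check that their base change property holds for every point $(\xi_{1},\xi_{2})\in\Divtil(k_{F})$, not only for the two specializations $u\mapsto\pi$ and $u\mapsto t$ that they treat.

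First I reduce to the quasi-split case. The inner form $\underline{G}$ is obtained from $\underline{G}^{*}$ by étale descent along $\breve{\mcO}[u]/\mcO[u]$, using the cocycle $\Int(\bold{g})\cdot\sigma$. By \cite[Proposition 1.14]{PappasZhuLocalModels} this cocycle normalizes the relevant facet. So once I have a $\Gamma$-stable model $\mcG^{*}_{\Omega}$ over $\breve{\mcO}[u]$ for $\underline{G}^{*}$, on which the twisted Frobenius acts, I get $\mcG_{\Omega}$ by descent. Smoothness, affineness and connected fibers descend, and so does the identification in (b), because the chain of apartment identifications displayed just above is compatible with $\Int(\bold{g})\cdot\sigma$.

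For quasi-split $\underline{G}^{*}$, I construct $\mcG^{*}_{\Omega}$ by the usual schematic closure. I start with the split group $H$ over $\mcO_{0}[v]$. Using the pinning, I define the root-subgroup models $\mcU_{a,\Omega}$ as $u$-adic lattices: for each relative root $a$ they are $\Gamma$-invariants of Weil restrictions of $v^{\lceil\cdot\rceil}$-scaled $\mathbb{G}_{a}$'s along $\mcO_{0}[v]/\mcO[u]$. For the torus I take the connected Néron-type model $\mcT$, namely the connected component of $(\Res H\text{-torus})^{\Gamma}$; here $(e,p)=1$ makes the invariants smooth. I then take $\mcG^{*}_{\Omega}$ to be the group smoothening/closure generated by the big cell $\prod\mcU_{a,\Omega}\times\mcT\times\prod\mcU_{-a,\Omega}$ inside $\Res(H)$. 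Smoothness and connectedness of fibers follow from the big cell being an open dense subscheme, via Bruhat--Tits schematic-root-datum arguments over the two-dimensional regular base $\mcO[u]$, as in \cite[Theorem 4.1]{PappasZhuLocalModels}. Affineness is proved by exhibiting the group as a closed subgroup scheme of $\Res(H)$ after normalization.

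For (b), base change along $\mcO[u]\to B^{+}_{(\xi_{1},\xi_{2})}(k_{F})$, $u\mapsto\xi_{2}$, is a map into a complete DVR with uniformizer $\xi_{2}$ \cite[Lemma 2.6]{bandoComparison}. It sends the tame cover $u=v^{e}$ to the tame extension of that DVR obtained by adjoining $\xi_{2}^{1/e}$. Taking $\Gamma$-invariants commutes with this flat base change because $|I|=e$ is invertible. So the base-changed big cell is exactly the Bruhat--Tits big cell for the transferred facet; this uses Prasad's tame descent \cite[Theorem 3.17]{PrasadTameDescentBT}, which identifies the invariant apartment with the one over $B(k_{F})$. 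Since both groups are smooth, affine, with connected fibers, and share generic fiber and big cell, they coincide by the characterization in \cite[Theorem 8.3.13]{KalethaPrasadBruhatTits}.

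I expect the main obstacle to be the uniformity in (b): showing that the valuation filtrations on root groups computed over $\mcO[u]$ specialize to the correct Bruhat--Tits filtrations at every point $(\xi_{1},\xi_{2})$, including non-standard ones. To handle this, I would first check it at $(\pi,t)$ and $(t,\pi)$, where \cite{PappasZhuLocalModels} does it. Then I would note that the statement depends only on the $\mcO[u]$-algebra $B^{+}(k_{F})$ being a complete DVR with uniformizer the image of $u$ and residue field $k_{F}$, which holds uniformly.
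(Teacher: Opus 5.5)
\paragraph{The gap: part (b) for the quasi-split group.} This is the only part of the statement not already in \cite{PappasZhuLocalModels}, and the decisive step is asserted rather than proved.

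After base change along $u\mapsto\xi_{2}$, what you actually have over $R=B^{+}_{(\xi_{1},\xi_{2})}$ is the neutral component of $(\Res_{\widetilde{R}/R}\mcH_{\Omega,\widetilde{R}})^{\Gamma}$. Here $\widetilde{R}$ is the tame cover, and by the split case $\mcH_{\Omega,\widetilde{R}}$ is the Bruhat--Tits group of $H$ over $\widetilde{R}$. You must show that this neutral component is the connected parahoric $\mcP_{\Omega}$, whose points over the strict henselization are the Kottwitz-kernel stabilizer $\underline{G}(L)^{0}_{\Omega}$. That is a tame-descent statement for group schemes. It involves three matches:
\begin{itemize}
\item the torus factor against the connected N\'eron model;
\item the $\Gamma$-invariant root-group lattices against the Bruhat--Tits filtrations of the relative root groups;
\item the neutral component against exactly the parahoric.
\end{itemize}
You only assert this (``the base-changed big cell is exactly the Bruhat--Tits big cell''). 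The reference you invoke, \cite[Theorem 3.17]{PrasadTameDescentBT}, identifies buildings and apartments, not group schemes.

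Your principle that the statement ``depends only on $B^{+}(k_{F})$ being a complete DVR with uniformizer the image of $u$'' is exactly how the paper treats the split case. It works there because $\mcG_{\Omega}$ is glued from a schematic root datum over $\mcO[u]$, so \cite[Section 1.2.13, 1.2.14]{BruhatTitsII} applies after any such base change. In the quasi-split case, Pappas--Zhu's group is a neutral component of fixed points, so nothing formal transfers.

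The paper supplies the missing step over $R=B^{+}(C)$ with $C$ algebraically closed:
\begin{enumerate}
\item It compares with the stabilizer scheme $\mcG^{\dagger}_{\Omega,R}$ of \cite[Theorem 8.3.2]{KalethaPrasadBruhatTits}.
\item It gets finite index of $R$-points from the split case, \cite[Proposition 7.7.1]{KalethaPrasadBruhatTits}, passage to $\gamma_{0}$-invariants, and strict henselianity.
\item It obtains a homomorphism to $\mcG^{\dagger}_{\Omega,R}$ via \cite[Corollary 2.10.11]{KalethaPrasadBruhatTits}.
\item It identifies the points with $(\mcG^{\dagger}_{\Omega,R})^{\circ}(R)=\mcP_{\Omega}(R)$ using \cite[Lemma A.4.26]{KalethaPrasadBruhatTits}; this is where the algebraically closed residue field is needed.
\item It concludes by full faithfulness, and only then descends from $\overline{k}_{F}$ to $k_{F}$ by \cite[Section 1.7.6]{BruhatTitsII}.
\end{enumerate}
Your argument works directly over $k_{F}$ and skips all of this. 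Either give such an argument or cite a genuine tame-descent theorem for parahoric group schemes.

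\paragraph{The existence part and smaller points.} Your reconstruction of $\mcG_{\Omega}$ is not Pappas--Zhu's. They take the neutral component of $(\Res_{\mcO_{0}[v]/\mcO[u]}\mcH_{\Omega})^{\Gamma}$, not a quasi-split schematic root datum over $\mcO[u]$.

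Your affineness argument fails. A non-hyperspecial parahoric model is not a closed subgroup scheme of $\Res(H)$: the Iwahori model is a dilatation of the reductive model along a Borel of the special fiber, and the map is not a monomorphism there. For root groups with $\lceil f(a)\rceil<0$ there is not even a map into the integral model. Moreover, an open subgroup such as a neutral component of an affine group over the two-dimensional base $\mcO[u]$ need not be affine in general; this is precisely the delicate part of \cite{PappasZhuLocalModels}. Since existence, smoothness, affineness, connected fibers and (a) are their theorem, cite them and concentrate on (b).

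Two minor points:
\begin{itemize}
\item Formation of fixed points commutes with every base change formally; tameness is what gives smoothness of the fixed points, not the base-change compatibility.
\item $\breve{\mcO}[u]/\mcO[u]$ is not \'etale, so the reduction from the general to the quasi-split case should go through a finite unramified extension or \cite[Section 1.7.6]{BruhatTitsII}, as in the paper.
\end{itemize}
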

\begin{Remark}
    As we will see in the proof, a similar base change property also holds for bounded convex subsets $$\Omega \subset \mcA(\underline{G}_{B_{(\xi_{1}^{\circ},\xi_{2}^{\circ})}(C)},\underline{S}_{B_{(\xi_{1}^{\circ},\xi_{2}^{\circ})}(C)}; B_{(\xi_{1}^{\circ},\xi_{2}^{\circ})}(C)),$$
    where $C\in \Perf_{k_{F}}$ is algebraically closed and $(\xi_{1}^{\circ},\xi_{2}^{\circ})\in \Divtil(C)$. 
\end{Remark}
\begin{proof}
This is essentially contained in \cite[Theorem 3.1]{PappasZhuLocalModels} although they state the base-change compatibilities only for $(\pi,t)$ and $(t,\pi)$ and they only work with points in the Bruhat--Tits building.
Let us explain why it holds in the generality claimed in (b).
 
First consider the case when $G=H_{F}$ is split.   
The pinning $(B_{H},\varepsilon)$ of $(H,T_{H})$ gives a Chevalley valuation on the root datum $\Phi=\Phi(H,T_{H})$ and therefore a hyperspecial point $x_{0}\in \mcA(H,T_{H};K)$ where $K$ is any discretely valued henselian valuation field with perfect residue field and fixed choice of uniformizer as above. 
In \cite[Section 3.b.2]{PappasZhuLocalModels} Pappas--Zhu construct a uniquely determined smooth group scheme $\mcG_{x_{0},\Omega}$ over $\mcO[u]$.
To orient the reader, let us give a quick recollection of the construction and refer to \cite[Section 3.b.2]{PappasZhuLocalModels} for more details and references:
The hyperspecial point $x_{0}\in \mcA(H,T_{H};K)$ defines a valuation on the root groups $U_{a}(K)$ and we can then consider the filtration $\lbrace U_{a}(K)_{x_{0},r} \rbrace_{r\in R}$ of $U_{a}(K)$.
The convex subset $\Omega\subset \mcA(H,T_{H};K)$ defines a \emph{concave} function
$$
f(a):=f_{\Omega}(a):=\inf \lbrace \lambda\in R\colon a(x-x_{0})+\lambda\geq 0 \forall x\in \Omega \rbrace.
$$
Consider the subgroup $H_{x_{0},\Omega}(K)\subset H(K)$ generated by all $U_{a}(K)_{x_{0},f(a)}$ and by $T_{H}(\mcO_{K})$.
Then we have for the Bruhat--Tits group scheme $\mcP_{\Omega}$ over $\Spec(\mcO_{K})$ that
$$
\mcP_{\Omega}(\mcO_{K})=H(K)_{x_{0},\Omega}\subset H(K).
$$

Now consider the additive group scheme $u^{\lceil f(a) \rceil}U_{a}\otimes_{\mbZ}\mcO[u]=\Spec(\mcO[u,u^{-\lceil f(a) \rceil}]x)$ and consider $\mcT=T_{H}\otimes_{\mbZ}\mcO[u]$. These group schemes define a schematic root datum over $\mcO[u]$ and by birational glueing of the group law we obtain a smooth group scheme $\mcG_{x_{0},\Omega}$ over $\mcO[u]$ with a fiberwise dense open subscheme given by
$$
\mcV_{x_{0},\Omega}=\prod_{a\in \Phi^{-}}\mcU_{a,x_{0},\Omega}\times \mcT \prod_{a\in \Phi^{+}}\mcU_{a,x_{0},\Omega}.
$$
By \cite[Section 1.2.13,1.2.14]{BruhatTitsII} the group scheme $\mcG_{x_{0},\Omega}$ is uniquely determined by the schematic root datum.
Recall that the Bruhat--Tits group scheme $\mcP_{\Omega}$ over $\Spec(\mcO_{K})$ is similarly constructed via birational glueing of the group law from the base change of the schematic root datum we constructed above along $\mcO[u]\rightarrow \mcO_{K}$, $u\mapsto \pi$.
Since Bruhat--Tits work in the generality needed here, we may use the uniqueness assertion in \cite[Section 1.2.13,1.2.14]{BruhatTitsII}, to deduce that
$$
\mcG_{x_{0},\Omega}\otimes_{\mcO[u]}\mcO_{K}\simeq \mcP_{\Omega,K}.
$$
If we specialize this discussion to $K=B_{(\xi_{1},\xi_{2})}(k)$ where $k\in \Perf_{k_{F}}$ is some perfect field, we deduce property (b) claimed in the Theorem. 
Property (a) is ensured by construction and in \cite[Section 3.b.2]{PappasZhuLocalModels} it is in particular shown that $\mcG_{x_{0},\Omega}=:\mcG_{\Omega}$ is affine (which is the difficult part) with connected fibers. This concludes the explanation of properties we need here in the split case.

Now let us assume that $G=G^{*}$ is quasi-split.
In \cite[Section 3.b.2, II),b)]{PappasZhuLocalModels}, we find the construction of a smooth affine group scheme with connected fibers $\mcG_{\Omega}\rightarrow \Spec(\mcO[u]).$ 
It is constructed from the split case: if $\Omega\subset \mcA(\underline{G}_{F},\underline{A}_{F};F)=\mcA(H_{\widetilde{F}},T_{H_{\widetilde{F}}};\widetilde{F})^{\Gamma}\subset \mcA(H_{\widetilde{F}},T_{H_{\widetilde{F}}};\widetilde{F})$ is bounded convex, we obtain by the split case that was dealt with before an affine smooth group scheme with connected fibers $\mcH_{\Omega}\rightarrow \Spec(\mcO_{0}[v])$.
By naturality of the construction, this group scheme carries a $\Gamma$-action and using this one constructs in a series of steps $\mcG_{\Omega}$.
Namely, let us \emph{only in this proof} write $\mcG_{\Omega}^{\prime}=(\Res^{\mcO_{0}[v]}_{\mcO[u]}\mcH_{\Omega})^{\Gamma}$\footnote{Later we will write $\mcG^{\prime}=\mcG_{\mcF^{\prime}}$ for the Pappas--Zhu group scheme associated to a facet $\mcF^{\prime}$; we hope that this does not lead to confusions later in this article.} and let $\mcG_{\Omega}=(\mcG_{\Omega}^{\prime})^{\circ}$ be the ('absolute') identity component which is shown by Pappas--Zhu in the proof of \cite[Theorem 3.1]{PappasZhuLocalModels} to be fiberwise connected. 
Sometimes we will write $\mcG_{\Omega}^{\PZ}$ resp. $\mcG_{\Omega}^{\prime \PZ}$ for these group schemes to distinguish them from the Bruhat--Tits group schemes.

We claim that $\mcG_{\Omega}$ satisfies the base change property with respect to $\mcO[u]\rightarrow B^{+}(k_{F})$ (resp. with respect to $\mcO[u]\rightarrow B^{+}(C)$, where $C\in \Perf_{k_{F}}$ is algebraically closed) as stated in (b) above.
Observe that given $\Spec(k)\rightarrow \Divtil$, with $k$ an $\mcO_{0}/\pi$-algebra, we obtain a finite free degree $e$ morphism $B^{+}(k)\rightarrow \widetilde{B^{+}}(k),$ where $\widetilde{B^{+}}(k)=B^{+}(k)[X]/(X^{e}-\xi_{2})$ is an $\mcO_{0}$-algebra that is a complete discrete valuation ring which comes along with a chosen uniformizer $\widetilde{\xi}_{2}\in \widetilde{B^{+}}(k)$ with the property that $\widetilde{\xi}_{2}^{e}=\xi_{2}.$
One may define a $\Gamma=\langle \sigma \rangle \ltimes \langle \gamma_{0} \rangle$-action on $\widetilde{B^{+}}(k)$ by letting $\sigma$ act via its natural action on $B^{+}(k)$ and $\gamma_{0}(\widetilde{\xi}_{2}):=\zeta \widetilde{\xi}_{2}$, where $\zeta\in \mcO_{0}$ is the primitive $e$-th root of unity given by $\gamma_{0}(\widetilde{\pi})/\widetilde{\pi}$ cf. Situation \ref{Situation: tamely ramified group} above.
This action has the property that $\widetilde{B^{+}}(k)^{\gamma_{0}}=B^{+}(k)$ and for the pair of divisors $(\pi,t)$ resp. $(t,\pi)$ this specializes to the natural action.
Furthermore, there exists an $\mcO[u]$-algebra homomorphism $\mcO_{0}[v]\rightarrow \widetilde{B^{+}}(k)$ given by $v\mapsto \widetilde{\xi}_{2}.$
In particular, $\underline{G}_{\widetilde{B}(k)}$ is split and isomorphic to $H\otimes_{\mathbb{Z}}\widetilde{B}(k).$

Let us assume that $k=C$ is algebraically closed and take any $(\xi_{1},\xi_{2})\in \Divtil(C)$.
Write $B^{+}(C):=B^{+}_{(\xi_{1},\xi_{2})}(C)$ and $B(C):=B_{(\xi_{1},\xi_{2})}(C)$.
As explained above in (\ref{eq: identification apartments quasi split groups}), we may consider $\Omega\subset \mcA(\underline{G}_{F},\underline{A}_{F};F)$ also as a bounded convex subset $\Omega\subset \mcB(\underline{G}_{B(C)};B(C))$ inside the apartment $\mcA(\underline{G}_{B(C)},\underline{S}_{B(C)};B(C)).$
By Bruhat--Tits \cite[Theorem 8.3.13]{KalethaPrasadBruhatTits}, we have the smooth affine group scheme with connected fibers $\mcP_{\Omega}\rightarrow \Spec(B^{+}(C))$ such that $\mcP_{\Omega}(B^{+}(C))=\underline{G}(B(C))^{0}_{\Omega}$ (pointwise stabilizer of $\Omega$ under $G(B(C))^{0}$ which is the kernel of the Kottwitz homomorphism).
Our claim is that $\mcG_{\Omega}\otimes_{\mcO[u]}B^{+}(C)\simeq \mcP_{\Omega}.$

We now show the claim.
Let us write $L=B(C),\widetilde{L}=\widetilde{B}(C),$ $R=B^{+}(C),\widetilde{R}=\widetilde{B^{+}}(C).$
By \cite[Theorem 8.3.2]{KalethaPrasadBruhatTits}, there exists a smooth affine $R$-group scheme $\mcG^{\dagger}_{\Omega,R},$ whose $R$-valued points agree with $\underline{G}(L)^{\dagger}_{\Omega}$ which is the stabilizer of $\Omega$ in $\underline{G}(L)^{1}$ (see \cite[Notation 2.6.15]{KalethaPrasadBruhatTits} for this notation). Furthermore, by the same Theorem, the relative identity component of $\mcG^{\dagger}_{\Omega,R}$ agrees with the connected Bruhat--Tits group scheme $\mcP_{\Omega}$ over $R$ such that $\mcP_{\Omega}(R)=\underline{G}(L)^{\circ}_{\Omega}$.

By the split case we dealt with before and by \cite[Proposition 7.7.1]{KalethaPrasadBruhatTits}, we see that $\mcH^{\prime,\PZ}_{\Omega}(\widetilde{R})\subseteq \mcH^{\dagger}_{\Omega}(\widetilde{R})$ is of finite index.
Passing to invariants under $\gamma_{0},$ we obtain that likewise $\mcG^{\prime,\PZ}_{\Omega}(R)\subseteq \mcG^{\prime}(L)^{\dagger}_{\Omega}$ is of finite index, where $\mcG^{\prime}=(\Res^{\mcO_{0}[v]}_{\mcO[u]}H_{\mcO_{0}[v]})^{\Gamma}$.
Since $\mcG^{\prime,\PZ}_{\Omega,R}$ and $\mcG^{\dagger}_{\Omega,R}$ have the same generic fiber $\underline{G}_{B(C)},$ we obtain by \cite[Corollary 2.10.11]{KalethaPrasadBruhatTits} an $R$-group scheme homomorphism $\mcG^{\prime, \PZ}_{\Omega,R}\rightarrow \mcG^{\dagger}_{\Omega,R}$ and we have $\mcG_{\Omega}^{\PZ}\subseteq \mcG^{\prime, \PZ}_{\Omega,R}$ is a smooth affine $R$-subgroup scheme with connected fibers.
Note that $\mcG_{\Omega,R}^{\PZ}(R)=(\mcG_{\Omega,R}^{\prime,\PZ})^{\circ}(R)\subseteq \mcG_{\Omega}^{\prime, \PZ}(R)$ is of finite index: this follows because $R$ is strictly henselian and therefore the quotient $\mcG_{\Omega}^{\prime,\PZ}(R)/\mcG_{\Omega,R}^{\PZ}(R)$ injects into the set of connected components $\pi_{0}(\mcG^{\prime,\PZ}_{\Omega,R}),$ which is finite.
Therefore, we see that $\mcG^{\PZ}_{\Omega}\rightarrow \mcG^{\dagger}_{\Omega,R}$ is a morphism of smooth $R$-group schemes, where the source has connected fibers and which has the property that $\mcG^{\PZ}_{\Omega}(R)\subseteq \mcG^{\dagger}_{\Omega,R}(R)=\mcG^{\prime}(L)^{\dagger}_{\Omega}$ is an open subgroup of finite index. 
Since by assumption we work with an algebraically closed residue field $C,$ we may use \cite[Lemma A.4.26]{KalethaPrasadBruhatTits} which precisely implies that in this situation we have that $\mcG^{\PZ}_{\Omega}(R)=(\mcG^{\dagger}_{\Omega,R})^{\circ}(R)$ while we have by definition that $(\mcG^{\dagger}_{\Omega,R})^{\circ}(R)=\mcP_{\Omega}(R)$.
We may then use the fully faithfulness result from \cite[Corollary 2.10.11]{KalethaPrasadBruhatTits} to conclude that
$$
\mcG_{\Omega}^{\PZ}\otimes_{\mcO[u]}B^{+}(C)\simeq \mcP_{\Omega},
$$
as claimed in part (b).
Using \cite[Corollary 2.10.11]{KalethaPrasadBruhatTits} again, this implies the desired claim in the case when $C$ is algebraically closed and the case of $k_{F}$ is deduced from this by \cite[Section 1.7.6]{BruhatTitsII}.

Now let us assume that $G$ is no longer necessarily quasi-split. 
Again, by Steinberg's theorem, we know that $G_{\breve{F}}$ is quasi-split and the arguments given by Pappas--Zhu in \cite[Section 3.b.2, (II), (c)]{PappasZhuLocalModels} construct a smooth affine group scheme $\mcG_{\Omega}$ over $\mcO[u]$ which then over $\breve{\mcO}[u]$ satisfies the base change compatibilities with respect to maps $\breve{\mcO}[u]\rightarrow B^{+}(\overline{k}_{F}).$
Using \cite[Section 1.7.6]{BruhatTitsII} again, we deduce then the desired base change compatibility over $\mcO[u].$
\end{proof}

Recall that we have chosen a rigidification $(G,A,S,P)$ of $G$ as in \cite[Definition 1.7]{PappasZhuLocalModels} and recall further the maximal torus $T=\Cent_{G}(S)$ in $G.$
Recall further that this data extends to $(\underline{G},\underline{A},\underline{S},\underline{P},\underline{T})$.
By the previous construction, Theorem \ref{Theorem: PZ}, there exist smooth affine group schemes $\mcS \subset \mcT\subset \mcG$ over $\mcO[u]$ with the property that for all $\mcO[u]\rightarrow B^{+}(C),$ the base change $\mcT\otimes_{\mcO[u]}B^{+}(C)$ resp. $\mcS\otimes_{\mcO[u]}B^{+}(C)$ identifies with the connected Néron model $\mcT_{B^{+}(C)}$ resp. $\mcS_{B^{+}(C)}$ of $\underline{T}_{B(C)}$ resp. $\underline{S}_{B(C)}$. 
\begin{Lemma}\label{Lemma: Iwahori Weyl group independent of choice}
    Let $C\in \Perf_{k_{F}}$ be an algebraically closed field and $(\xi_{1},\xi_{2}),(\xi^{\prime}_{1},\xi^{\prime}_{2})\in \Divtil(C)$.
    Then there exists an identification
    $$
    \widetilde{W}_{(\xi_{1},\xi_{2})}(C)\simeq \widetilde{W}_{(\xi^{\prime}_{1},\xi^{\prime}_{2})}(C),
    $$
    where
    $\widetilde{W}_{(\xi_{1},\xi_{2})}(C)=N_{\underline{G}}(\underline{T})(B_{(\xi_{1},\xi_{2})}(C))/\mcT(B^{+}_{(\xi_{1},\xi_{2})}(C))$
    resp. similarly for $(\xi^{\prime}_{1},\xi^{\prime}_{2})$.
\end{Lemma}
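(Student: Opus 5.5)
Set $L=B_{(\xi_{1},\xi_{2})}(C)$, $R=B^{+}_{(\xi_{1},\xi_{2})}(C)$, and use primes for the corresponding objects attached to $(\xi'_{1},\xi'_{2})$. The plan is to identify both Iwahori--Weyl groups with one group that does not depend on the choice of divisor pair. That group is built from the combinatorial data of $\underline{G}$ over $\mcO[u^{\pm}]$: its relative Weyl group and the coinvariants of the cocharacter lattice under inertia.

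First, since $C$ is algebraically closed and $L$ is a complete discretely valued field with residue field $C$, the standard Kottwitz/Haines--Rapoport description gives a split exact sequence
$$
1\rightarrow \underline{T}(L)/\mcT(R)\rightarrow \widetilde{W}_{(\xi_{1},\xi_{2})}(C)\rightarrow W_{0}(L)\rightarrow 1 .
$$
Here $W_{0}(L)=N_{\underline{G}}(\underline{T})(L)/\underline{T}(L)$. The first term is identified with $X_{*}(\underline{T})_{I}$ via the Kottwitz map; this uses that $\mcT_{R}$ is the connected Néron model, as recalled before the lemma. Concretely, $\underline{T}$ splits over the tame cover $\widetilde{L}=L[X]/(X^{e}-\xi_{2})$, and the inertia of $\widetilde{L}/L$ is $\langle\gamma_{0}\rangle$, acting on $X_{*}(T_{H})$ through the fixed action $\tau$. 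Neither the cover nor this action depends on $(\xi_{1},\xi_{2})$, so $X_{*}(\underline{T})_{I}=X_{*}(T_{H})_{\langle\gamma_0\rangle}$ is the same abstract group for both pairs.

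Next, I claim $W_{0}(L)$ is the Weyl group of the relative root system of $(\underline{G}_{L},\underline{S}_{L})$. Here $\underline{S}_{L}$ is maximal $L$-split, since $L$ has algebraically closed residue field and the cover is totally ramified of degree $e$. This relative root system is read off from the $\Gamma$-action on the pinned root datum of $H$, again independent of the pair. To get a canonical isomorphism rather than just an abstract one, I would realize the splitting: the normalizer $N_{\underline{G}}(\underline{T})$ is a group scheme over $\mcO[u^{\pm}]$, and the Tits lifts of simple reflections, taken over $\mcO_{0}[v^{\pm}]$ and made $\Gamma$-equivariant as in Pappas--Zhu, give sections defined over $\mcO[u^{\pm}]$. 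These map to both $L$ and $L'$, so the finite part is compatible.

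For the translation part, I would use the elements $\lambda(\widetilde{\xi}_{2})$, normed down from $\widetilde{L}$, exactly as one defines $t_{\lambda}$ over $F$; these are compatible with the $\mcO[u^{\pm}]$-structure via $u\mapsto\xi_{2}$ or $u\mapsto \xi'_{2}$.

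An alternative and cleaner route is the affine-apartment description. $\widetilde{W}$ acts on the apartment, with the affine Weyl group equal to the group generated by reflections in the affine root hyperplanes, together with the stabilizer $\Omega$ of an alcove, which is $\cong \pi_{1}(\underline{G})_{I}$. Then \eqref{eq: identification apartments quasi split groups} identifies the apartments together with their affine root hyperplanes (Prasad's tame descent), so the affine root systems coincide. Both $W_{\mathrm{aff}}$ and $\pi_1(\underline{G})_I$ coincide as well, and I would define the identification as the one compatible with the actions on the identified apartments.

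The main obstacle is this last compatibility: checking that the identification of apartments from \cite[Theorem 3.17]{PrasadTameDescentBT} respects the affine root hyperplanes and the $\widetilde{W}$-action, rather than merely identifying affine spaces. I would verify it on $\widetilde{L}$, where everything is split and hyperplanes are given by the Chevalley valuation with uniformizer $\widetilde{\xi}_{2}$, and then take $\gamma_{0}$-fixed points.
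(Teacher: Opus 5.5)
Your argument is correct in outline, but it is organized differently from the paper's. For the bare existence statement your first paragraph already suffices: both sides are abstractly $X_*(\underline{T})_I\rtimes W(H,T_H)^{\gamma_0}$. The global lifts are what make the identification canonical, which the paper needs later.

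\textbf{The paper's route.} It never passes through this abstract model. It takes the single global group $N_{\underline{G}}(\underline{T})(\breve{\mcO}[u^{\pm}])/\mcT(\breve{\mcO}[u])$, maps it to $\widetilde{W}_{(\xi_1,\xi_2)}(C)$ via $u\mapsto\xi_2$, and shows this map is an isomorphism for every pair by the snake lemma. The finite Weyl part is handled by the vanishing of $H^1(\langle\gamma_0\rangle,T_H(\breve{\mcO}[v^{\pm}]))$ and $\Pic(\breve{\mcO}[v^{\pm}])=0$, which identify the global quotient with $W(H,T_H)^{\gamma_0}$. The torus part is referred to Louren\c{c}o's argument.

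\textbf{Your route.} You use the Kottwitz isomorphism and the Haines--Rapoport splitting separately at each point. The global elements (Tits lifts $n_w$ and the norms $t_\lambda=\prod_i\gamma_0^i(\lambda(v))$) serve only to rigidify the identification. This avoids computing the global group, in particular the injectivity $\underline{T}(\breve{\mcO}[u^{\pm}])\cap\mcT(B^+(C))=\mcT(\breve{\mcO}[u])$ hidden in the torus step.

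The price is that you must check $(\lambda,w)\mapsto t_\lambda n_w$ is a homomorphism modulo $\mcT(B^+(C))$ at each point. This reduces to three facts: $\kappa(t_\lambda)=[\lambda]$; $W_0$-equivariance of $\kappa$; and $n_{s_O}^2=(2\rho_O^\vee)(-1)$ being a $\gamma_0$-invariant cocharacter evaluated at a unit, hence in $\ker\kappa$. You should make these explicit. The paper's route instead gives all specializations at once from one source group and directly yields the section $w\mapsto\dot w$ used afterwards; your lifts produce such a section as well.

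\textbf{Two corrections.} First, the lifts must live over $\breve{\mcO}[u^{\pm}]$, not $\mcO[u^{\pm}]$. Over $\mcO[u^{\pm}]$, $\underline{G}$ is only an inner form of $\underline{G}^*$, and imposing $\sigma$-invariance only lifts $W^{\tau(\Gamma)}$, which can be smaller than $W_0=W^{\tau(\gamma_0)}$. Second, in the apartment variant, compatibility with the actions on the identified apartments cannot by itself determine the identification. Torsion in $X_*(\underline{T})_I$, and in the non-extended apartment also the central part, acts trivially. So the Kottwitz identification $\Omega\cong\pi_1(\underline{G})_I$ is genuinely needed there, not just a convenience.
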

\begin{proof}
    This is standard, see e.g. \cite[Proposition 3.4.1]{JoaoAffGrassZ} for a similar argument.
    Consider the group $N_{\underline{G}}(\underline{
    T})(\breve{\mcO}[u^{\pm}])/\mcT(\breve{\mcO}[u])$ and we obtain maps to $\widetilde{W}_{(\xi_{1},\xi_{2})}(C)$ resp. $\widetilde{W}_{(\xi^{\prime}_{1},\xi^{\prime}_{2})}(C)$ via sending $u\mapsto \xi_{2}$ resp. $u\mapsto \xi_{2}^{\prime}$.
    To verify that these are isomorphisms, we may by the snake lemma reduce to the analogous question concerning the maps $N_{\underline{G}}(\underline{T})(\breve{\mcO}[u^{\pm}])/\underline{T}(\breve{\mcO}[u^{\pm}])\rightarrow N_{\underline{G}}(\underline{T})(B_{(\xi_{1},\xi_{2})}(C))/\underline{T}(B_{(\xi_{1},\xi_{2})}(C))$ given by $u\mapsto \xi_{2}$ (resp. similarly for $u\mapsto \xi_{2}^{\prime}$) and $\underline{T}(\breve{\mcO}[u^{\pm}])/\mcT(\breve{\mcO}[u])\rightarrow \underline{T}(B_{(\xi_{1},\xi_{2})}(C))/\mcT(B^{+}_{(\xi_{1},\xi_{2})}(C))$ given again by $u\mapsto \xi_{2}$ (resp. similarly for $u\mapsto \xi_{2}^{\prime}$).
    
    Let us explain why the first map is an isomorphism. For the second map, one may argue as in \cite[Proposition 3.4.1]{JoaoAffGrassZ}.
    We claim that
    $$
    H^{1}(\langle \gamma_{0} \rangle,T_{H}(\breve{\mcO}[v^{\pm}]))=0.
    $$
    Recall here that we are considering the \emph{diagonal} action of $\gamma_{0}$ on $T_{H}(\breve{\mcO}[v^{\pm}])$ where it acts on $T_H$ via $\tau\colon \Gamma\rightarrow \Aut(H)$ coming from action on the index root datum of $G$ and where $\gamma_{0}$ acts naturally on  $\breve{\mcO}[v^{\pm}]$.
    In other words, this cohomology group classifies étale torsors on $\Spec(\breve{\mcO}[u^{\pm}])$ under the torus $(\Res^{\breve{\mcO}[v^{\pm}]}_{\breve{\mcO}[u^{\pm}]}T_{H})^{\gamma_{0}}$.
    
    Granting this vanishing,
    this implies that 
    \begin{align*}
        N_{\underline{G}}(\underline{T})(\breve{\mcO}[u^{\pm}])/\underline{T}(\breve{\mcO}[u^{\pm}]) & = (N_{\underline{H}}(T_{H})(\breve{\mcO}[v^{\pm}])/T_{H}(\breve{\mcO}[v^{\pm}]))^{\gamma_{0}} \\
                    & = W(H,T_{H})^{\gamma_{0}}.
    \end{align*}
    Here we used in the last equality that $\Pic(\breve{\mcO}[v^{\pm}])=0$.
    Therefore the maps are isomorphisms, as claimed. 
    
    Now let us explain the vanishing.
    Recall first that for any $(\xi_{1},\xi_{2})\in \Divtil(C)$, the ring $B_{\xi_{1},\xi_{2}}(C)$ is a complete discretely valued field with algebraically closed residue field, so that the first étale cohomology of $\Spec(B_{\xi_{1},\xi_{2}}(C))$ with coefficients in any reductive group vanishes.
    Therefore, $$H^{1}_{\et}(\Spec(B_{\xi_{1},\xi_{2}}(C)),(\Res^{\breve{\mcO}[v^{\pm}]}_{\breve{\mcO}[u^{\pm}]}T_{H})^{\gamma_{0}})=0.$$
    But by direct inspection, one sees that for any choice of $(\xi_{1},\xi_{2})\in \Divtil(C)$, there is an isomorphism
    $$
    H^{1}(\langle \gamma_{0} \rangle,T_{H}(\breve{\mcO}[v^{\pm}]))\simeq H^{1}(\langle \gamma_{0} \rangle, T_{H}(\widetilde{B}_{(\xi_{1},\xi_{2})}(C))),
    $$
    so that the vanishing holds, as desired.
\end{proof}
Let us write $\widetilde{W}$ for the constant value of $\widetilde{W}_{(\xi_{1},\xi_{2})}(C)$.
For later use, let us remark that there exists a set-theoretic map 
$$
\widetilde{W}\rightarrow N_{\underline{G}}(\underline{T})(\breve{\mcO}(\!(u)\!)),
$$
denoted by $w\mapsto \dot{w}$,
which after base-change along $\breve{\mcO}(\!(u)\!)\rightarrow B_{(\xi_{1},\xi_{2})}(C)$, $u\mapsto \xi_{2}$, gives a section to the natural map $N_{\underline{G}}(\underline{T})(B_{(\xi_{1},\xi_{2})}(C))\rightarrow \widetilde{W}$. 
The existence of such a map can be justified as in \cite[Proposition 3.3]{EndoscopyMetaplectic} or follows from the proof we have given for Lemma \ref{Lemma: Iwahori Weyl group independent of choice}.

For facets $\mcF, \mcF^{\prime} \subset \mcA(G,A;F),$ recall that there are associated subgroups of the Iwahori--Weyl group $\widetilde{W}_{\mcF^{\prime}},\widetilde{W}_{\mcF}\subseteq \widetilde{W}$.
Recall that these are defined as follows: if $\mcP_{\mcF}$ resp. $\mcP_{\mcF^{\prime}}$ are the corresponding Bruhat--Tits group schemes over $\mcO$, then 
$$
\widetilde{W}_{\mcF}=(N_{G}(T)(\breve{F})\cap \mcP_{\mcF}(\breve{\mcO}))/\mcT(\breve{\mcO})
$$
and similarly for $\widetilde{W}_{\mcF^{\prime}}$.
If we transfer $\mcF,\mcF^{\prime}$ under the identification induced by (\ref{eq: identification apartments quasi split groups}), then the analogue of Lemma \ref{Lemma: Iwahori Weyl group independent of choice} holds true for $\widetilde{W}_{\mcF}$ and $\widetilde{W}_{\mcF^{\prime}}$ respectively.
In the following, we will always assume that $\mcF$, $\mcF^{\prime}$ are contained in the closure of a common alcove.
Recall that in that case, the quasi-Coxeter structure on $\widetilde{W}$ induces a length function on the double quotient $\widetilde{W}_{\mcF^{\prime}} \backslash \widetilde{W} / \widetilde{W}_{\mcF}$. This length function in turn determines an order and this order is called the Bruhat-order (see \cite[Lemma 1.6 ff]{RicharzJAlgebra}). As the notation already suggests, this length function is independent of the choice of $(\xi_{1},\xi_{2})\in \Divtil(C)$ used to define the appearing groups. 

\subsection{Stratification of the Bando affine Grassmannian}
We keep the notation from Situation \ref{Situation: tamely ramified group} above. Recall that we write $k:=\overline{k}_{F}$.
Let $\mcF, \mcF^{\prime} \subset \mcA(G,A;F)$ be facets contained in the closure of a common alcove. 
By the previous \S \ref{Subsection: Recollections on PZ group schemes}, we have the Pappas--Zhu group schemes $\mcG_{\mcF}=:\mcG,\mcG_{\mcF^{\prime}}=:\mcG^{\prime}$ over $\mcO[u].$
Consider $\Gr^{B}_{\mcG}$ and the left action $L\underline{G}\times \Gr^{B}_{\mcG}\rightarrow \Gr^{B}_{\mcG}$.
This induces a left action of $L^{+}\mcG^{\prime}\subseteq L\underline{G}$ on $\Gr^{B}_{\mcG}$.
\begin{Lemma}\label{Lemma: Iwahori Weyl group and double orbits}
    Let $C\in \Perf_{k_{F}}$ be algebraically closed and fix $(\xi_{1},\xi_{2})\in \Divtil(C)$. 
    There exists a bijection
    $$
    \widetilde{W}_{\mcF^{\prime}}\backslash \widetilde{W} / \widetilde{W}_{\mcF}\longrightarrow L^{+}\mcG^{\prime}(C)\backslash L\underline{G}(C)/L^{+}\mcG(C)=L^{+}\mcG^{\prime}(C)\backslash \Gr^{B}_{\mcG}(C).
    $$
\end{Lemma}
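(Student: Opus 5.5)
The plan is to reduce to the classical Bruhat decomposition for parahoric subgroups of a quasi-split reductive group over a complete discretely valued field with algebraically closed residue field. Fix $(\xi_{1},\xi_{2})\in \Divtil(C)$ and write $L=B(C)$, $R=B^{+}(C)$. By \cite[Lemma 2.6]{bandoComparison}, $L$ is a complete discretely valued field with valuation ring $R$ and residue field $C$. By Theorem \ref{Theorem: PZ}(b), in the version for algebraically closed $C$ from the Remark following it, we have $\mcG\otimes_{\mcO[u]}R\simeq \mcP_{\mcF}$ and $\mcG^{\prime}\otimes_{\mcO[u]}R\simeq \mcP_{\mcF^{\prime}}$. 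Here $\mcF,\mcF^{\prime}$ are transferred into $\mcA(\underline{G}_{L},\underline{S}_{L};L)$ via (\ref{eq: identification apartments quasi split groups}), and they remain in the closure of a common alcove because the identification is an isomorphism of affine apartments compatible with the affine root systems. Hence $L^{+}\mcG^{\prime}(C)=\mcP_{\mcF^{\prime}}(R)$ and $L^{+}\mcG(C)=\mcP_{\mcF}(R)$ are the connected parahoric subgroups of $\underline{G}(L)$. Since $\underline{G}_{L}$ is quasi-split (it is an inner form that splits over $\widetilde{L}$ and $C$ is algebraically closed, so Steinberg applies), the Bruhat decomposition for parahorics, e.g.\ \cite[Proposition 8]{RicharzJAlgebra} or Haines--Rapoport, gives a bijection
$$
\widetilde{W}_{\mcF^{\prime}}(L)\backslash \widetilde{W}(L)/\widetilde{W}_{\mcF}(L)\isomto \mcP_{\mcF^{\prime}}(R)\backslash \underline{G}(L)/\mcP_{\mcF}(R),\qquad w\mapsto \mcP_{\mcF^{\prime}}(R)\dot{w}\mcP_{\mcF}(R).
$$
Here $\widetilde{W}(L)=N_{\underline{G}}(\underline{T})(L)/\mcT(R)$ and $\widetilde{W}_{\mcF}(L)=(N(L)\cap \mcP_{\mcF}(R))/\mcT(R)$. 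We then compose with the identifications $\widetilde{W}(L)\simeq \widetilde{W}$ of Lemma \ref{Lemma: Iwahori Weyl group independent of choice} and with its analogue for $\widetilde{W}_{\mcF},\widetilde{W}_{\mcF^{\prime}}$ recorded after that lemma. Concretely, the bijection sends $w$ to the class of the image of $\dot{w}\in N_{\underline{G}}(\underline{T})(\breve{\mcO}(\!(u)\!))$ under $u\mapsto \xi_{2}$.

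The second step is the equality $L\underline{G}(C)/L^{+}\mcG(C)=\Gr^{B}_{\mcG}(C)$. Since $\Gr^{B}_{\mcG}$ is the étale sheafification of $L\mcG/L^{+}\mcG$, it suffices that every étale $L^{+}\mcG$-torsor over $\Spec C$ is trivial. Over the algebraically closed field $C$ every étale cover has a section, so this is automatic. Moreover $L\mcG(C)=\mcG(B(C))=\underline{G}(L)$ by Theorem \ref{Theorem: PZ}(a), so $L\mcG(C)=L\underline{G}(C)$.

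I expect the main obstacle to be the compatibility of the subgroups $\widetilde{W}_{\mcF}$ under change of $(\xi_{1},\xi_{2})$. One must check that the lifts $\dot{w}$ specialise into $\mcP_{\mcF}(R)$ exactly when $w\in \widetilde{W}_{\mcF}$, independently of the point. I would argue through the action on apartments. The group $\widetilde{W}$ acts on $\mcA$ by affine transformations through $\nu\colon \widetilde{W}\to \mathrm{Aff}(\mcA)$, and $\widetilde{W}_{\mcF}$ is the stabiliser of $\mcF$ in the kernel of the Kottwitz map. Both the action and the Kottwitz map are compatible with the identification (\ref{eq: identification apartments quasi split groups}), since both are computed from $\dot{w}$ over $\breve{\mcO}(\!(u)\!)$: the translation part comes from $\underline{T}(\breve{\mcO}[u^{\pm}])/\mcT(\breve{\mcO}[u])$ and the finite part from $W(H,T_{H})^{\gamma_{0}}$, as in the proof of Lemma \ref{Lemma: Iwahori Weyl group independent of choice}. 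Once this is settled, the bijection is well defined and independent of the chosen lift.
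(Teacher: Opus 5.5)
Your proposal is correct and follows the paper's route. Theorem \ref{Theorem: PZ} identifies $L^{+}\mcG(C)$ and $L^{+}\mcG^{\prime}(C)$ with the parahoric subgroups of $\underline{G}(B(C))$, and the Bruhat decomposition for parahorics then gives the bijection. The paper cites this decomposition as \cite[Appendix, Proposition 8]{PappasRapoportTwistedLoopGroups}, which is the Haines--Rapoport result you have in mind, not \cite{RicharzJAlgebra}. Your remarks on sheafification at $\Spec C$ and on transporting $\widetilde{W}_{\mcF},\widetilde{W}_{\mcF^{\prime}}$ across different $(\xi_{1},\xi_{2})$ just make explicit what the paper takes for granted.
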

\begin{proof}
    Given Theorem \ref{Theorem: PZ}, this follows from \cite[Appendix, Proposition 8]{PappasRapoportTwistedLoopGroups}( see also \cite[Chapter 5]{KalethaPrasadBruhatTits}).
\end{proof}
Given $w\in \widetilde{W}$, we have $\dot{w}\in N_{\underline{G}}(\underline{T})(\breve{\mcO}(\!(u)\!))$ and we obtain a section $\dot{w}\in L\underline{G}(\Divtilk)$.
\begin{Definition}[Schubert cells and Schubert varieties]
    Let $w\in \widetilde{W}$ and consider $\dot{w}\in N_{\underline{G}}(\underline{T})(\breve{\mcO}(\!(u)\!))$ and let $k_{w}/k_{F}$ be the reflex field.\footnote{By Lemma \ref{Lemma: Iwahori Weyl group independent of choice} we know that $\widetilde{W}$ identifies with the Iwahori--Weyl group defined over $F$, so that it receives an action of $\Gal(\breve{F}/F)$ and then the \emph{reflex field} of $w$ is the one defined by the stabilizer of $w$ in $\Gal(\breve{F}/F)$.}
    
    The Schubert cell $\Gr^{B,w}_{\mcG}:={}^{\mcF^{\prime}}{\!}\Gr^{B,w}_{\mcG}\subset \Gr^{B}_{\mcG}$ associated to $w$ is the étale descent to $k_{w}$ of the $L^{+}\mcG^{\prime}\restriction_{\Perf_{\overline{k}_{F}}}$-orbit of $\dot{w}$.
    The Schubert variety associated to $w$ is the corresponding Zariski closure $\Gr^{B,\leq w}_{\mcG}=\overline{\Gr^{B,w}_{\mcG}}\subset \Gr^{B}_{\mcG}$.
\end{Definition}
The \emph{Hecke stack} is defined by
$$
\Hk{\mcG,\mcG^{\prime}}:=[L^{+}\mcG^{\prime}\backslash \Gr_{\mcG}^{B}]
$$
Although the Hecke stack depends on the pair $(\mcG,\mcG^{\prime})$, let us write in the following for simplicity $\Hk{}=\Hk{\mcG,\mcG^{\prime}}$ with the groups understood from the context.
For $w\in \widetilde{W}_{\mcF^{\prime}}\backslash \widetilde{W} / \widetilde{W}_{\mcF}$ we obtain substacks
$$
\Hk{}^{w}\subseteq \Hk{}^{\leq w}\subset \Hk{}
$$
given by $\Hk{}^{w}=[L^{+}\mcG^{\prime}\backslash \Gr_{\mcG}^{B,w}]$ and $\Hk{}^{\leq w}=[L^{+}\mcG^{\prime}\backslash \Gr_{\mcG}^{B,\leq w}]$.
\begin{Lemma}\label{lemma: structure of Orbits in Bando affine Grassmannian}
    Let $w\in \widetilde{W}_{\mcF^{\prime}}\backslash \widetilde{W} / \widetilde{W}_{\mcF}$.
    \begin{enumerate}
        \item[(a)] We have that $\Hk{}^{w}\simeq [\Divtil\restriction_{\Perf_{k_{w}}}/ \Stab_{L^{+}\mcG^{\prime}}(w)]$, where $\Stab_{L^{+}\mcG^{\prime}}(w)\subseteq L^{+}\mcG^{\prime}$ is the closed subgroup stabilizing $\dot{w}\in L\underline{G}(\Divtil\restriction_{\Perf_{\overline{k}_{F}}}),$
        \item[(b)] $\Stab_{L^{+}\mcG^{\prime}}(w)$ is isomorphic to $L^{+}\mcG_{\Omega},$ where $\Omega=\overline{\mcF^{\prime}\cup w\cdot \mcF}$ (convex hull). Therefore $$\Gr^{B,w}_{\mcG}\simeq L^{+}\mcG^{\prime}/ \Stab_{L^{+}\mcG^{\prime}}(w)$$ is a perfectly smooth scheme, perfectly of finite presentation over $\Divtil \restriction_{\Perf_{k_{w}}},$
        \item[(c)] $\Hk{}^{\leq w}=\coprod_{v\leq w} \Hk{}^{v}$ as sets, where $'\leq '$ denotes the Bruhat-order. 
    \end{enumerate}
\end{Lemma}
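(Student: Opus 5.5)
We prove the three parts in order, following the pattern of \cite[Lemma 3.30 ff]{bandoComparison} and the classical argument for affine flag varieties in \cite{RicharzJAlgebra}. The new input is that all group-theoretic statements hold uniformly over $\Divtil$, which we extract from Theorem \ref{Theorem: PZ} applied to the bounded convex subset $\Omega=\overline{\mcF^{\prime}\cup w\cdot\mcF}$ of the apartment.

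For (a), the section $\dot w\in L\underline{G}(\Divtil\restriction_{\Perf_{\overline{k}_F}})$ defines a map $L^{+}\mcG^{\prime}\to\Gr^{B}_{\mcG}$, $g\mapsto g\dot w\cdot e$, over $\Divtil$. By definition, $\Gr^{B,w}_{\mcG}$ is the étale descent to $k_{w}$ of the image of this map. The orbit map therefore induces a monomorphism of étale sheaves
\[
L^{+}\mcG^{\prime}/\Stab_{L^{+}\mcG^{\prime}}(w)\hookrightarrow\Gr^{B}_{\mcG},
\]
whose image is the orbit, surjective étale locally. Passing to quotients by $L^{+}\mcG^{\prime}$ then gives $[\Divtil/\Stab]\simeq\Hk{}^{w}$. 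Descent to $k_{w}$ works because $\dot w$ is Galois-stable up to $\widetilde{W}_{\mcF^{\prime}},\widetilde{W}_{\mcF}$.

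For (b), the stabilizer of $\dot w$ in $L^{+}\mcG^{\prime}$ is $L^{+}\mcG^{\prime}\cap \dot w L^{+}\mcG\dot w^{-1}$. Both factors are loop groups of smooth affine $\mcO[u]$-models of $\underline{G}$: the second is $L^{+}$ of the conjugate model $\dot w\mcG\dot w^{-1}=\mcG_{w\mcF}$, where conjugation by $\dot w\in N_{\underline{G}}(\underline{T})(\breve{\mcO}(\!(u)\!))$ translates the schematic root datum. I would show that $\mcG_{\Omega}$ is the schematic closure/intersection. Concretely, on $R$-points for $C$ algebraically closed and any $(\xi_1,\xi_2)\in\Divtil(C)$, Theorem \ref{Theorem: PZ}(b) together with Bruhat--Tits theory gives
\[
\mcP_{\mcF^{\prime}}(B^+)\cap\mcP_{w\mcF}(B^+)=\mcP_{\Omega}(B^+),
\]
as connected stabilizers of $\Omega$, since connected pointwise stabilizers intersect to the stabilizer of the convex hull. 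To upgrade this to an isomorphism of group functors $L^{+}\mcG_{\Omega}\simeq\Stab$ over $\Divtil$, I would use that $\mcG_{\Omega}\to\mcG^{\prime}$ and $\mcG_{\Omega}\to\dot w\mcG\dot w^{-1}$ extend the identity on the generic fiber. The monomorphism $L^{+}\mcG_{\Omega}\to\Stab$ is then an isomorphism once we check it on geometric points and use the big-cell decomposition $\mcV_{x_0,\Omega}$ for a general perfect $R$. \emph{This is the main obstacle}: the comparison for non-field $R$. I would handle it by working with the open cell and with the root subgroups $u^{\lceil f(a)\rceil}U_a$, where the concave functions satisfy $f_{\Omega}=\max(f_{\mcF^{\prime}},f_{w\mcF})$ directly.

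Once (b) holds, Lemma \ref{Lemma: Quoient by congruence subgroup base changed from a field} shows that both $L^{+}\mcG^{\prime}$ and $L^{+}\mcG_{\Omega}$ contain $(L^{+})^{\geq m}$ for $m\gg0$ with perfectly smooth quotients; these are successive extensions of $\overline{\mcG}\times\Divtil$ by vector groups. The quotient $\Gr^{B,w}_{\mcG}$ is therefore a quotient of perfectly smooth finite-type group schemes over $\Divtil\restriction_{\Perf_{k_w}}$, hence perfectly smooth and perfectly of finite presentation.

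For (c), Lemma \ref{Lemma: Iwahori Weyl group and double orbits} gives pointwise that $\Gr^{B}_{\mcG}$ is the disjoint union of the cells. It remains to show that on each geometric fiber over $(\xi_1,\xi_2)\in\Divtil(C)$, the closure of $\Gr^{B,w}$ meets exactly the $\Gr^{B,v}$ with $v\leq w$. On a fiber this is the classical closure relation for parahoric affine flag varieties \cite{RicharzJAlgebra}, which holds for the loop group over the local field $B(C)$ because the parahorics match by Theorem \ref{Theorem: PZ}. Because each cell is smooth over the base by (b), the relative closure commutes with taking fibers at the level of points: use Demazure-type resolutions built from $L^{+}\mcG_{\mcF_s}$ for the simple affine reflections, which are proper over $\Divtil$ and have image the union of the $\Gr^{B,v}$, $v\leq w$, fiberwise. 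This identifies $\Hk{}^{\leq w}$ set-theoretically as claimed.
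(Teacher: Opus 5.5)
\textbf{Verdict: there is a genuine gap in (b).} Your arguments for (a) and (c) are fine. For (c), the Demazure-resolution argument supplies detail that the paper leaves to ``the closure relations are verified immediately''.

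Your geometric-point computation in (b) is also correct. For $C$ algebraically closed and any $(\xi_1,\xi_2)\in\Divtil(C)$, write parahorics as pointwise stabilizers inside the Kottwitz kernel $\underline{G}(B(C))^{0}$. This gives $\mcP_{\mcF'}(B^+(C))\cap\dot w\,\mcP_{\mcF}(B^+(C))\,\dot w^{-1}=\mcP_{\Omega}(B^+(C))$.

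The gap is the step you flag yourself: passing to arbitrary perfect $R$. The method you propose for it does not work as stated, for two reasons.
\begin{enumerate}
\item The big cell $\mcV_{x_0,\Omega}$ with root groups $u^{\lceil f(a)\rceil}U_a$ is the schematic root datum of the \emph{split} construction. For quasi-split non-split $G$, which is the case actually new in this lemma, the Pappas--Zhu group is the identity component of $(\Res^{\mcO_0[v]}_{\mcO[u]}\mcH_{\Omega})^{\Gamma}$. You give no open-cell description of it over $\mcO[u]$.
\item Even for split groups, $R$-points of $\Stab_{L^+\mcG'}(w)$ need not lie in the big cell. Already for $w=1$ and $\mcF=\mcF'$ hyperspecial, $\Stab=L^+\mcG$ contains representatives of nontrivial finite Weyl group elements, and these lie outside the big cell. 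So the identity $f_\Omega=\max(f_{\mcF'},f_{w\mcF})$ on root groups controls only the open cell. You would need an extra generation or \'etale-local translation argument, which you do not supply.
\end{enumerate}

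\textbf{The missing idea.} No such argument is needed, because all test rings are perfect and hence reduced. The paper notes that $B^+(R)$ is $\xi_2$-torsion free with $B^+(R)/\xi_2\simeq R$, and $B^+(C)$ is a discrete valuation ring with uniformizer $\xi_2$. An induction on $n$ then shows the following: if $x\in B(R)$ satisfies $\xi_2^n x\in B^+(R)$, and the image of $x$ lies in $B^+(C)$ for every geometric point $R\to C$, then $x\in B^+(R)$.

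Since $\mcG_\Omega$ is affine and flat over $\mcO[u]$ with generic fiber $\underline{G}$, an element $g\in\underline G(B(R))$ lies in $L^+\mcG_\Omega(R)$ if and only if $g^*$ maps $\mathcal{O}(\mcG_\Omega)$ into $B^+(R)$. Hence $L^+\mcG_\Omega=\Stab_{L^+\mcG'}(w)$ follows directly from your geometric-point identity, uniformly for split and quasi-split groups. Equivalently, a closed immersion of perfect schemes that is bijective on geometric points is an isomorphism.

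\textbf{Comparison with the paper.} With this repair your route is a genuine alternative. The paper reduces to $\overline{k}_F$ and cites \cite[Lemma 4.3.7]{RicharzScholbachIntersectionMotives} for split groups. It then treats the quasi-split case by taking $\gamma_0$-invariants of the split identity over $\widetilde{B^+}(R)$ and passing to relative identity components. Your route avoids both the citation and the identity-component step. The price is that you need Theorem \ref{Theorem: PZ}(b) at every geometric point of $\Divtil$, which the proof of that theorem does provide.
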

\begin{proof}
    Item (a) follows by definition.
    Now we turn to item (b).
    Note that $\Stab_{L^{+}\mcG^{\prime}}(w)=L^{+}\mcG^{\prime}\cap \dot{w}L^{+}\mcG\dot{w}^{-1}$ and we have a closed immersion of perfectly pro-algebraic groups $L^{+}\mcG_{\Omega}\rightarrow \Stab_{L^{+}\mcG^{\prime}}(w)$ defined over $\Spec(k_{w}).$
    To show that this closed immersion is indeed an isomorphism, we may replace $k_{w}$ by $\overline{k}_{w}=\overline{k}_{F}$. In that case for $R\in \Perf_{k}$, the rings $B(R)$ resp. $B^{+}(R)$ are $\breve{\mcO}$-algebras and we may therefore assume that $G$ is quasi-split. 
    The case when $G$ is split is contained in \cite[Lemma 4.3.7]{RicharzScholbachIntersectionMotives}.
    Now let us go back to the notation in \S \ref{Subsection: Recollections on PZ group schemes}.
    By the split case, we have $\mcH_{\Omega}(\widetilde{B}^{+}(R))^{\gamma_{0}}=\Stab_{L^{+}\mcH^{\prime}}(w)(\widetilde{B}^{+}(R))^{\gamma_{0}}$ and passing to the relative identity component, which is a functorial procedure, we deduce the desired result. 
    Finally, item (c) follows from Lemma \ref{Lemma: Iwahori Weyl group and double orbits}; the closure relations are verified immediately.
\end{proof}

Finally, as a preparation for the constant term functor, we need to discuss connected components of $\Gr^{B}_{\mcG}$.
As usual, this reduces to a question about $L\underline{G}$.

Let $X_{*}(\underline{T})=\Hom_{\mcO_{0}[v^{\pm}]}(\mathbb{G}_{m,\mcO_{0}[v^{\pm}]},\underline{T}_{\mcO_{0}[v^{\pm}]})(=X_{*}(T_{H}))$.
This is a finite free $\mathbb{Z}$-module with an action by $I=\langle \gamma_{0} \rangle$.
Recall that we have the coroot lattice $\Phi^{\vee}_{\abs}=\Phi(H,T_{H})^{\vee}\subset X_{*}(\underline{T})$.
This lattice is stable under $\gamma_{0}$.
Consider the \emph{algebraic fundamental group} $\pi_{1}(\underline{G})=X_{*}(\underline{T})/\mathbb{Z}\cdot \Phi^{\vee}_{\abs}$.
Let us construct as usual a locally constant function
$$
\kappa_{\underline{G}}\colon |L\underline{G}|\rightarrow \pi_{1}(\underline{G})_{I}.
$$
We may argue as in \cite[Section 2.a.2]{PappasRapoportTwistedLoopGroups} to reduce the construction of the Kottwitz map for $C\in \Perf_{k_{F}}$ algebraically closed
$$
\kappa_{\underline{G}}\colon \underline{G}(B(C))\rightarrow \pi_{1}(\underline{G})_{I}
$$
to the case when $G=\mathbb{G}_{m}$, where it is simply given by the $\xi_{2}$-adic valuation.
This then defines the desired map $\kappa_{\underline{G}}$ on the underlying topological space $ |L\underline{G}|$ of $L\underline{G}$.
\begin{Lemma}\label{Lemma: connected components of Bando affine Grassmannian}
	The map $
	\kappa_{\underline{G}}\colon |L\underline{G}|\rightarrow \pi_{1}(\underline{G})_{I}
	$ is locally constant. 
\end{Lemma}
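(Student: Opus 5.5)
We prove local constancy by following the reduction used to define $\kappa_{\underline{G}}$: first reduce to tori, then to $\mathbb{G}_m$ after a tame cover, where the statement becomes that the $\xi_{2}$-adic valuation of a unit of $B(R)$ is locally constant on $\Spec(R)$.

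\textbf{Reduction to $\breve{\mcO}$ and to tori.} Local constancy is étale local on $\Divtil$, so we may base change to $\Divtilk$ with $k=\overline{k}_F$. The rings $B(R)$ are then $\breve{\mcO}$-algebras, and as in the proof of Lemma \ref{lemma: structure of Orbits in Bando affine Grassmannian} we may assume $\underline{G}=\underline{G}^{*}$ is quasi-split. Following \cite[Section 2.a.2]{PappasRapoportTwistedLoopGroups}, choose a z-extension
\[
1\to \underline{Z}\to \underline{G}'\to \underline{G}\to 1
\]
over $\mcO[u^{\pm}]$ (after the tame cover $\mcO_0[v^{\pm}]$ this is the usual construction) with $\underline{G}'_{\mathrm{der}}$ simply connected and $\underline{Z}$ an induced torus. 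Then
\[
\pi_1(\underline{G}')_I\to \pi_1(\underline{G})_I
\]
is surjective and $\kappa$ is compatible with it. Since $H^1$ of an induced torus vanishes étale locally, $L\underline{G}'\to L\underline{G}$ is surjective as étale sheaves, and it is an open map on topological spaces. Hence it suffices to treat $\underline{G}'$. For $\underline{G}'$, the map $\kappa$ factors through $L\underline{D}$, where $\underline{D}=\underline{G}'/\underline{G}'_{\mathrm{der}}$ is a torus and $\pi_1(\underline{G}')=X_*(\underline{D})$. So we reduce to $\underline{G}=\underline{D}$, a torus split over $\mcO_0[v^{\pm}]$.

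\textbf{Tori.} Embed $\underline{D}$ into an induced torus $\Res^{\mcO_0[v^{\pm}]}_{\mcO[u^{\pm}]}\mathbb{G}_m^n$ with torus cokernel. Coinvariants behave well enough that $\kappa$ on $\underline{D}$ is the restriction of $\kappa$ on the induced torus composed with an injection on coinvariants modulo torsion. The torsion part is handled by noting that it is locally constant once the free part is; alternatively, use a flasque resolution. For the induced torus, $L\Res\,\mathbb{G}_m(R)=\widetilde{B}(R)^{*}$, where $\widetilde{B}(R)=B(R)[X]/(X^{e}-\xi_2)$, and $\kappa$ is the $\widetilde{\xi}_2$-adic valuation. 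This is now the key claim: for $f\in \widetilde{B}(R)^{*}$, the function $x\mapsto v_{\widetilde{\xi}_2}(f(x))$ on $\Spec R$ is locally constant.

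\textbf{Key claim (main obstacle).} Reduce to $e=1$ by taking norms and using that $\widetilde{B}^{+}(R)$ is finite free over $B^{+}(R)$. Write $f=\xi_2^{-N}g$ with $g\in B^{+}(R)=W_{\mcO}(R)[\![t]\!]/(\xi_1)$, and similarly $f^{-1}=\xi_2^{-N}h$, so that $gh=\xi_2^{2N}$. This mirrors Bando's treatment of $\mathbb{G}_m$ in \cite{bandoComparison} and the classical argument for $L\mathbb{G}_m$ using $R[\![t]\!]$. Using $B^{+}(R)/\xi_2^{m}$, which is an iterated extension of $R$-modules by Lemma \ref{Lemma: Quoient by congruence subgroup base changed from a field}, write
\[
g \bmod \xi_2^{2N+1}=\sum_{i=0}^{2N} a_i\,\xi_2^{i}
\]
with $a_i\in R$ via Teichmüller-type lifts, and similarly for $h$. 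The identity $gh=\xi_2^{2N}$ then imposes the usual conditions: $\sum_{i+j=k}a_ib_j=\delta_{k,2N}$ up to the twist. These conditions force $\Spec R$ to decompose as the disjoint union of the loci $\{a_0=\dots=a_{i-1}=0,\ a_i\in R^{*}\}$, which are open and closed since $R$ is perfect, hence reduced. The main difficulty is making this coefficient extraction rigorous in the mixed setting where $\xi_2$ is a combination of $\pi$ and $t$; the multiplication on $B^{+}(R)/\xi_2^{m}$ is not literally that of $R[\xi_2]/\xi_2^{m}$. I would handle this by filtering by powers of $\xi_2$ and arguing only with leading terms, because the valuation at a point $x$ is the least $i$ such that the image of $g$ in $\xi_2^{i}B^{+}/\xi_2^{i+1}\otimes k(x)$ is nonzero. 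The only input needed is nilpotence of the loci where leading coefficients vanish, which follows from $gh=\xi_2^{2N}$ and reducedness.
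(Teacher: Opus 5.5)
The torus step fails as written, and this is a genuine gap. Embedding $\underline{D}\hookrightarrow P$ into an induced torus only shows that $\iota_*\circ\kappa_{\underline{D}}=\kappa_P\circ\iota$ is locally constant, where $\iota_*\colon X_*(\underline{D})_I\to X_*(P)_I$. Two facts make this insufficient. First, $X_*(P)_I$ is torsion free, being the coinvariants of a permutation module. Second, $\ker\iota_*$ is the image of $H_1(I,X_*(P/\underline{D}))$. So $\iota_*$ kills the whole torsion subgroup of $X_*(\underline{D})_I$, and the torsion component of $\kappa_{\underline{D}}$ is never seen. Your remark that the torsion part ``is locally constant once the free part is'' has no content: a map to a finitely generated abelian group that is locally constant modulo torsion can be arbitrary on the torsion. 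This case cannot be avoided in the tamely ramified setting. Take the ramified unitary group $U_3$ with $p\neq 2$. Its derived group is already simply connected, and $\underline{D}=U_1$ is the norm-one torus, with $X_*(\underline{D})=\mathbb{Z}$ and $\gamma_0$ acting by $-1$. Hence $\pi_1(\underline{G})_I=\mathbb{Z}/2$, while $\iota_*=0$ for every embedding of $U_1$ into an induced torus. So your argument says nothing about the only nontrivial part of $\kappa$ in this basic example.

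To reduce a torus to $\mathbb{G}_m$ you have to go the other way. Use a surjection $P\twoheadrightarrow\underline{D}$ from an induced torus with toral kernel $F$, for instance the flasque resolution you mention only as an aside, so that $X_*(P)_I\to X_*(\underline{D})_I$ is onto. You then need to show that $LP\to L\underline{D}$ is surjective étale locally on the test scheme $\Spec R$. Equivalently, the $F$-torsors over $\Spec B(R)$ arising as fibres must become trivial after an étale cover of $\Spec R$. At an algebraically closed point this is the vanishing of $H^1(B(C),F)$, but in families it needs an argument, and that is the content actually missing. The analogous lifting for the induced kernel in your $z$-extension step is also only asserted. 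Once you have such lifting, you can descend local constancy along étale covers of test schemes, so you do not need openness of $|L\underline{G}'|\to|L\underline{G}|$.

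The rest is fine. Passing to $\underline{G}'$ with simply connected derived group and pulling back $\kappa_{\underline{D}}$ along $|L\underline{G}'|\to|L\underline{D}|$ legitimately bypasses the paper's use of connectedness of $L\underline{G}_{\mathrm{sc}}$ via affine root groups. Your key claim for $\mathbb{G}_m$ becomes clean if phrased symmetrically. Suppose $gh=\xi_2^{M}$ in $B^+(R)$. The locus $\{v(g)\geq i\}$ is closed by induction on $i$, using that $R$ is reduced, that $B^+$ is $\xi_2$-torsion free with $B^+(R)/\xi_2=R$, and base change to the reduced closed subscheme. Its complement $\{v(h)\geq M-i+1\}$ is closed for the same reason.
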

\begin{proof}
	This reduces to the case where $\underline{G}=\underline{T}$ with $T$ a torus or when $G$ is simply connected.
	In the simply connected case, we claim that $|L\underline{G}|$ is connected. But in this case, we may argue using affine root-groups to see that $L_{C}\underline{G}$, where $C\in \Perf_{k_{F}}$ is algebraically closed, is indeed connected (cf. \cite[Lemma 4.2]{AGLR}).
	In the case of a torus, we are furthermore reduced to $T=\mathbb{G}_{m}$ and then it finally follows from the fact that the $\xi_{2}$-adic valuation of an element $x\in
	B(R)^{*}$ is Zariski-locally constant on $|\Spec(R)|$.
\end{proof}
For $C\in \Perf_{k_{F}}$ an algebraically closed field, the Kottwitz map $\kappa_{\underline{G}}$ is trivial on $L^{+}\mcG(C)\subseteq L\underline{G}(C)$ by the base change compatibility in Theorem \ref{Theorem: PZ} and by \cite[Corollary 11.5.4]{KalethaPrasadBruhatTits}.
The following definition therefore makes sense.
\begin{Definition}\label{def: Kottwitz map on Bando aff Grass}
    The Kottwitz map on the Bando affine Grassmannian is
    $$
    \kappa_{\underline{G}}\colon |\Gr^{B}_{\mcG}|\longrightarrow \pi_{1}(\underline{G})_{I}
    $$
    induced by the map $\kappa_{\underline{G}}\colon |L\underline{G}|\rightarrow \pi_{1}(\underline{G})_{I}$.
\end{Definition}
Note that the Kottwitz map on the Bando affine Grassmannian is still locally constant.

We furthermore record the following property of the Bando affine Grassmannian which will be needed later on.
\begin{Lemma}\label{Lemma: Bando affine Grassmannian ind-projective for Pappas--Zhu group schemes}
    $\Gr^{B}_{\mcG}\rightarrow \Divtil$ over $\Perf_{k_{F}}$ is ind-perfectly proper.
\end{Lemma}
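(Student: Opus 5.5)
We reduce properness to the valuative criterion together with the representability of Proposition \ref{Proposition: representability Bando affine Grassmannian}. That proposition writes $\Gr^{B}_{\mcG}$ as an increasing union of perfectly finitely presented, perfectly quasi-projective closed subschemes over $\Divtil$. It therefore suffices to exhibit a cofinal system of closed subschemes that are perfectly proper over $\Divtil$. Our candidates are finite unions of the Schubert varieties $\Gr^{B,\leq w}_{\mcG}$ for $\mcF^{\prime}=\mcF$, or better for an Iwahori $\mcF^{\prime}$ in the closure of a common alcove.

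The first step is a reduction to a simpler group scheme. By the construction in Theorem \ref{Theorem: PZ}, $\mcG$ is the identity component of $(\Res^{\mcO_{0}[v]}_{\mcO[u]}\mcH)^{\Gamma}$. After base change to $\breve{\mcO}$, which does not affect properness by fpqc descent along $\Divtilk\to\Divtil$, we may assume $G$ is quasi-split. We then compare with the split situation. There is a map $\Gr^{B}_{\mcG^{\prime\PZ}}\to \Gr^{B}_{\Res\mcH}$, and $\Gr^{B}_{\Res \mcH}$ should identify with the Bando Grassmannian of the split group $\mcH$ over the degree $e$ cover $\widetilde{B}^{+}$, with $\xi_2$ replaced by $\widetilde{\xi}_2$. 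The latter is ind-proper by Bando \cite[Theorem 3.26]{bandoComparison}, after enlarging $\mcH_\Omega$ to the reductive model $H$ via the usual argument that a parahoric affine flag variety fibres properly over the hyperspecial Grassmannian with fibres partial flag varieties of $\overline{\mcH}$. The $\gamma_0$-fixed points of an ind-proper ind-scheme form a closed sub-ind-scheme, hence are ind-proper, since $e$ is prime to $p$. It then remains to show two things: that $\Gr^{B}_{\mcG^{\prime\PZ}}\to(\Gr^{B}_{\Res\mcH})^{\gamma_0}$ is a closed immersion, or at least finite, and that $\Gr^B_{\mcG}\to\Gr^B_{\mcG^{\prime\PZ}}$ is finite étale-ish on components. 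The second map compares quotients by $L^{+}\mcG\subseteq L^{+}\mcG^{\prime\PZ}$ of finite index in the sense of Theorem \ref{Theorem: PZ}(b), so it is a finite étale cover fibrewise, with fibres $\pi_{0}$ of the special fibre.

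A cleaner alternative avoids these comparisons. Choose the closed immersion $\mcG\hookrightarrow\GL_n$ of \cite[Corollary 10.7]{PappasZhuLocalModels} used in Proposition \ref{Proposition: representability Bando affine Grassmannian}, and check the valuative criterion directly on Schubert varieties. Concretely, we prove that each $\Gr^{B,\leq w}_{\mcG}$ is perfectly proper over $\Divtilk$ using the valuative criterion for a perfect valuation ring $V$ with fraction field $K$. Given a $K$-point of $\Gr^{B,\leq w}_{\mcG}$ over a $V$-point of $\Divtil$, we must extend it over $V$. By Lemma \ref{Lemma: Iwahori Weyl group and double orbits} and Lemma \ref{lemma: structure of Orbits in Bando affine Grassmannian}(c), the $K$-point lies, after extending $K$ to be algebraically closed, in some orbit $L^+\mcG(K)\dot v$ with $v\leq w$. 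One then uses a Demazure/Bott–Samelson resolution. Let $w=s_1\cdots s_r\tau$, with each $s_i$ a simple affine reflection. For each simple affine reflection $s$, $L^{+}\mcG_{\mcF_s}/L^{+}\mcG_{\mathcal{I}}$ is the perfection of $\mathbb{P}^1$ over $\Divtil$; this follows from Lemma \ref{Lemma: Quoient by congruence subgroup base changed from a field}, because $\overline{\mcG_{\mcF_s}}/\overline{\mcG_{\mathcal I}}$ is a $\mathbb{P}^1$ uniformly over $\Divtil$, the special fibres being independent of $(\xi_1,\xi_2)$. The convolution space $L^+\mcG_{\mcF_{s_1}}\times^{L^+\mcG_{\mathcal I}}\cdots\times^{L^+\mcG_{\mathcal I}}L^+\mcG_{\mcF_{s_r}}\dot\tau/L^+\mcG$ is therefore an iterated $\mathbb{P}^{1,\perf}$-bundle, hence proper over $\Divtil$. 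Its image under multiplication is closed and equals $\Gr^{B,\leq w}_{\mcG}$, since it contains the open orbit and is contained in the closure by (c). So $\Gr^{B,\leq w}_{\mcG}$ is proper, as the image of a proper space in a separated one; separatedness follows from the ind-quasi-projectivity in Proposition \ref{Proposition: representability Bando affine Grassmannian}.

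Finally, we show that these Schubert varieties exhaust $\Gr^B_{\mcG}$. Any perfectly finitely presented closed subscheme $Z$ of the ind-presentation is quasi-compact over $\Divtil$, so it meets only finitely many strata by the locally constant Kottwitz map (Lemma \ref{Lemma: connected components of Bando affine Grassmannian}) together with a boundedness argument. It is therefore contained in a finite union of the $\Gr^{B,\leq w}_{\mcG}$. The main obstacle I expect is this boundedness over the whole base $\Divtil$, not just pointwise. A quasi-compact $Z$ maps into finitely many $\GL_n$-Schubert cells, bounded by the valuations of the matrix entries in $\xi_2$, after embedding via $\mcG\hookrightarrow\GL_n$. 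Each $\GL_n$-Schubert variety intersects only finitely many $\mcG$-orbits, by the Iwahori–Weyl comparison of Lemma \ref{Lemma: Iwahori Weyl group independent of choice}, which is uniform in $(\xi_1,\xi_2)$.
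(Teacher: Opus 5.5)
Your first route is essentially the paper's, but it leaves open the step that carries the content. You say $\Gr^{B}_{\mcG^{\prime\PZ}}\to(\Gr^{B}_{\Res\mcH})^{\gamma_0}$ should be a closed immersion or finite, but you give no argument. The paper makes this work by first reducing to the Iwahori level, using the surjection $\Gr^{B}_{\mcG_y}\to\Gr^{B}_{\mcG_x}$, where $y$ is the barycenter of a $\gamma_0$-stable alcove whose closure contains $x$.
\begin{itemize}
\item In the split case, $\Gr^{B}_{\mcG_y}$ is then an $(H/B)^{\perf}$-bundle over Bando's ind-projective $\Gr^{B}_{\mcG_{x_0}}$; this comes from the dilatation along a Borel at $u=0$.
\item In the tame case, $\overline{\mcH}^{\red}$ is a split torus, so the kernel of $L^{+}\mcH\to\overline{\mcH}^{\red}$ is perfectly pro-unipotent. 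This is the input that lets the rest of Pappas--Zhu's argument go through.
\end{itemize}

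Your second route, via Demazure resolutions, is genuinely different and looks viable. It proves properness of each Schubert variety directly and uniformly over $\Divtil$, because $L^{+}\mcG_{\mcF_s}/L^{+}\mcG_{\mathcal I}\simeq\mathbb{P}^{1,\perf}\times\Divtil$ is constant. It needs neither Bando's hyperspecial theorem nor the fixed-point comparison, and it gives the closure relations of Lemma \ref{lemma: structure of Orbits in Bando affine Grassmannian}(c) along the way. The price is twofold.

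\emph{First}, you need $L^{+}\mcG_{\mathcal I}$ to be the full preimage of a Borel-type subgroup $\overline{P}\subset\overline{\mcG}_{\mcF_s}$ on all perfect $R$, not only on geometric points. This is the analogue of the dilatation description the paper uses in its split case. For tame groups it has to be extracted from the $\gamma_0$-invariant construction.

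\emph{Second}, exhaustion by Schubert varieties must be proved separately. The paper instead inherits it from Bando's presentation of $\Gr^{B}_{\mcG_{x_0}}$ through the bundle and the surjection.
\begin{itemize}
\item The Kottwitz map only controls connected components, not strata.
\item The uniform finiteness you attribute to the Iwahori--Weyl comparison needs an actual argument. The $\xi_2$-adic valuations of matrix entries are not themselves uniform in $(\xi_1,\xi_2)$: coefficients in $\pi\breve{\mcO}$ vanish at $(\pi,t)$ but not at $(t,\pi)$.
\item One way to get it: fix $\iota\colon\mcG\hookrightarrow\GL_n$ and write $\dot w=t^{\lambda}\dot w_f$ with $\dot w_f$ in a fixed finite set. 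The entries of $\iota(\dot w_f)^{\pm1}$ have valuations bounded below uniformly in $(\xi_1,\xi_2)$, by their Laurent expansions in $u$.
\item So if $\iota(\dot w)$ lies in the bounded piece of lattices between $\xi_2^{\pm N}$ times the standard one, the valuations of the eigenvalues of $\iota(t^{\lambda})$ are bounded. These are essentially the pairings of $\lambda$ with the weights of the faithful representation $\iota|_{\underline{T}}$, which do not depend on $(\xi_1,\xi_2)$.
\item Since perfect schemes are reduced, the resulting set-theoretic containment of a quasi-compact $Z$ in finitely many Schubert varieties is then scheme-theoretic.
\end{itemize}
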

\begin{proof}
    We adapt the proof given by Pappas--Zhu \cite[Proposition 5.3]{PappasZhuLocalModels} to our situation.
    
    Let us first suppose that $\underline{G}=H\otimes_{\mathbb{Z}}\mcO[u^{\pm}]$ is split.
    Let $\mfa$ be an alcove in $\mcA(G,T;F)$ such that $\mcF\subseteq \overline{\mfa}$ (closure) and take $x\in \mcF$.
    If $y$ is in the interior of $\mfa$, then by the proof of \cite[Theorem 3.1]{PappasZhuLocalModels}, we obtain a smooth affine group scheme $\mcG_{y}$ associated to $y$ and a morphism $\mcG_{y}\rightarrow \mcG_{x}$ such that $\mcG_{y}\otimes_{\mcO[u]}\mcO[u^{\pm}]=\mcG_{x}\otimes_{\mcO[u]}\mcO[u^{\pm}]$.
    This implies that $\Gr^{B}_{\mcG_{y}}\rightarrow \Gr^{B}_{\mcG_{x}}$ is surjective and it suffices to show that $\Gr^{B}_{\mcG_{y}}$ is ind-perfectly proper.
    There exists a hyperspecial point $x_{0}\in \overline{\mfa}$.
    Then we have that $\mcG_{x_{0}}=H\otimes_{\mathbb{Z}}\mcO[u]$ and we have a morphism $\mcG_{y}\rightarrow \mcG_{x_{0}}$ realizing $\mcG_{y}$ as the dilatation of $H\otimes_{\mathbb{Z}}\mcO[u]$ along a Borel $B$ in the fiber $u=0$.
    By the functor of points description of the dilatation, and since the $B^{+}(R)$ are $\xi_{2}$-torsionfree and $B^{+}(R)/\xi_{2}\simeq R$, we obtain that the fpqc sheaf on perfect schemes over $\Divtil$ associated to $R\mapsto \mcG_{x_{0}}(B^{+}(R))/\mcG_{y}(B^{+}(R))$ is isomorphic to the perfectly projective $\Divtil$-scheme $(H_{k_{F}}/B_{k_{F}})^{\perf}\times_{\Spec(k_{F})}\Divtil$.
    Since it is known by Bando \cite[Theorem 3.26]{bandoComparison} that $\Gr^{B}_{\mcG_{x_{0}}}$ is ind-perfectly projective, we deduce the desired result for $\Gr_{\mcG_{y}}^{B}$.
    
    Now consider the general case.
    We may reduce to $\underline{G}$ is quasi-split and split over $\mcO[v^{\pm}]$ and $\mcO=\mcO_{0}$.
    Recall that $x\in \mcA(G,A;F)=\mcA(H_{\widetilde{F}},T_{H,\widetilde{F}};\widetilde{F})^{\gamma_{0}}$ and $\mcG_{x}$ is the relative identity component of $(\Res^{\mcO[v]}_{\mcO[u]}\mcH_{x})^{\gamma_{0}}$.
    By the last part of the proof of \cite[Theorem 3.1]{PappasZhuLocalModels}, we find a $\gamma_{0}$-invariant alcove $\mfa\subset \mcA(H_{\widetilde{F}},T_{H,\widetilde{F}};\widetilde{F})$ such that $x\in \overline{\mfa}$.
    Let $y\in \mfa$ be the barycenter which is then fixed by $\gamma_{0}$.
    Then consider $\mcH_{y}=:\mcH$ (Iwahori). Then the maximal reductive quotient $\overline{\mcH}^{\red}$ is a split torus over $\mcO$.
    Using Lemma \ref{Lemma: Quoient by congruence subgroup base changed from a field}, we see that the kernel of $\mcH(B^{+}(R))\rightarrow \overline{\mcH}^{\red}(R)$ is an affine perfectly pro-unipotent group scheme. 
    From here, the rest of the proof of \cite[Proposition 5.3]{PappasZhuLocalModels} adapts readily.
\end{proof}
\section{Hecke stack}\label{Section: Hecke stack}
We work over $k=\overline{k}_{F}$.
Let $G$ be a tamely ramified group over $F$ as in Situation \ref{Situation: tamely ramified group} but we additionally assume that $G=G^{*}$ is quasi-split since over $\breve{\mcO}[u^{\pm}]$ the reductive group scheme $\underline{G}$ constructed by Pappas--Zhu is quasi-split in any case. 
We keep the notation from Situation \ref{Situation: tamely ramified group} and in particular $e$ is the ramification index of the tamely ramified splitting field $\widetilde{F}$ of $G$. Let $(G,A,S,P)$ be a rigidification
of $G$. Fix a pair of facets $(\mcF,\mcF^{\prime})$ of $\mcA(G,A;F)$ contained in the closure of a common alcove, and let
$\mcG$ and $\mcG^\prime$ be the corresponding Pappas--Zhu parahoric group
schemes. Recall that the associated Hecke stack over $\Divtil$ is the quotient
\[
    \Hk{}=\left[L^{+}\mcG^{\prime}\backslash\Gr^{B}_{\mcG}\right].
\]
We study universally locally acyclic motives on $\Hk{}$. The definition and
the basic properties below parallel
\cite[Section VI.6]{farguesscholzeGeometrization}.

\subsection{Bounded and universally locally acyclic motives on the Hecke stack}\label{subsection: bounded and ula motives on Hecke stacks}

Let $T$ be a perfect $\Divtilk$-scheme and set
$\Hk{T}=\Hk{}\times_{\Divtilk}T$. We use the extension of $\DA(-)$ to perfect
prestacks recalled in the Notation section. Thus, $\DA(\Hk{T})$ is the category
of étale motives on the quotient prestack $\Hk{T}$. Denote the quotient map by
\[
    q_T:\Gr^{B}_{\mcG,T}\longrightarrow\Hk{T}.
\]

By Proposition \ref{Proposition: representability Bando affine Grassmannian}
and Lemma
\ref{Lemma: Bando affine Grassmannian ind-projective for Pappas--Zhu group schemes},
we may choose a filtered presentation
\[
    \Gr^{B}_{\mcG}=\underset{i\in I}\colim X_i
\]
by $L^+\mcG^\prime$-stable finite unions of Schubert varieties. Each $X_i$
is perfectly projective and perfectly of finite presentation over $\Divtilk$,
and the transition maps are closed immersions. For a perfect
$\Divtilk$-scheme $T$, put
\[
    X_{i,T}=X_i\times_{\Divtilk}T,\qquad
    \Hk{T,i}=[L^+\mcG^\prime_T\backslash X_{i,T}].
\]
For each $i$, the action on $X_i$ factors through
$L^{(m_i)}\mcG^\prime=L^+\mcG^\prime/(L^+\mcG^\prime)^{\geq m_i}$ for some
$m_i$. Lemma
\ref{Lemma: Quoient by congruence subgroup base changed from a field} and
homotopy invariance give
\[
    \DA(\Hk{T,i})\simeq
    \DA([L^{(m_i)}\mcG^\prime_T\backslash X_{i,T}]).
\]
The right-hand side is independent of the sufficiently large choice of
$m_i$.

\begin{Definition}\label{definition:boundedhecke}
	An object $M\in\DA(\Hk{T})$ is \emph{bounded} if it is supported on
	$\Hk{T,i}$ for some $i\in I$. Equivalently, $q_T^*M$ is supported on a
	finite union of Schubert cells. We denote the full subcategory of bounded
	motives by $\DA(\Hk{T})^{\bd}$.
\end{Definition}
\begin{Definition}\label{definition:ulahecke}
	We define $\DA\ula(\Hk{T})$ to be the full subcategory of
	$\DA(\Hk{T})^{\bd}$ consisting of those objects $M$ for which $q_T^*M$
	is universally locally acyclic over $T$.
\end{Definition}
\begin{Remark}
	Although Definition \ref{definition:ulahecke} appears to depend on the
	ordering of $(\mcF,\mcF^\prime)$, it is independent of this ordering by an
	argument similar to
	\cite[Proposition VI.6.2]{farguesscholzeGeometrization}.
\end{Remark}
For $w\in \widetilde{W}_{\mcF^{\prime}}\backslash \widetilde{W} /
\widetilde{W}_{\mcF}$, let
\begin{equation*}
	\Hk{}^{w} =\left[L^{+}\mcG^\prime\backslash\Gr^{B,w}_{\mcG}\right],\quad
	\Hk{}^{\leq w}=
	\left[L^{+}\mcG^{\prime}\backslash\Gr^{B,\leq w}_{\mcG}\right]
\end{equation*}
be the corresponding substacks of $\Hk{}$. For a perfect $\Divtilk$-scheme
$T$, let $\Hk{T}^w$ and $\Hk{T}^{\leq w}$ denote their base changes to $T$,
and define $\DA\ula(\Hk{T}^w)$ and $\DA\ula(\Hk{T}^{\leq w})$ as in
Definition \ref{definition:ulahecke}.

\begin{Lemma}\label{lemma:stalkwise}
	Let $T$ be a perfect $\Divtilk$-scheme. An object
	$$M\in \DA^{\bd}(\Hk{T})$$
	is universally locally acyclic over $T$ if and only if for all
	$w\in\widetilde{W}_{\mcF^{\prime}}\backslash
	\widetilde{W}/\widetilde{W}_{\mcF}$, the pullback
	$[w]^*M\in\DA(T)$ along the section $[w]:T\rightarrow \Hk{T}$ is
	dualizable.
\end{Lemma}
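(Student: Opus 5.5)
We follow the proof of \cite[Proposition VI.6.5]{farguesscholzeGeometrization}: reduce along the stratification to single strata, where the orbit description of Lemma \ref{lemma: structure of Orbits in Bando affine Grassmannian} makes ULA-ness equivalent to dualizability of one stalk. Since $M$ is bounded, $q_T^*M$ is supported on some $X_{i,T}$, a finite union of Schubert varieties. By Lemma \ref{lemma: structure of Orbits in Bando affine Grassmannian}(c) this is stratified by finitely many locally closed cells $\Gr^{B,w}_{\mcG,T}$, which are perfectly smooth over $T$ by part (b). We also use that the condition ``ULA over $T$'' for $q_T^*M$ can be tested after pullback to the smooth atlases $[L^{(m_i)}\mcG'_T\backslash X_{i,T}]$, since smooth pullback preserves and reflects ULA-ness for motives \cite{preisUlaMotives}.

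For the direction ($\Rightarrow$), note that $[w]\colon T\to\Hk{T}$ factors as $T\to\Gr^{B,w}_{\mcG,T}\to \Hk{T}$, with the first map a section of a smooth morphism. Pullback along the lift $\dot w\colon T\to \Gr^B_{\mcG,T}$ preserves ULA objects over $T$, because it is a base change along a morphism over $T$, and ULA objects over $T$ on $T$ itself are exactly the dualizable ones, $\DA\ula(T/T)=\DA\rig(T)$. For ($\Leftarrow$), we argue by induction on the number of strata in the support, using open–closed localization triangles $j_!j^*M\to M\to i_*i^*M$ with $j$ the inclusion of an open stratum $\Hk{T}^w$ of the support. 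It suffices to show two things: first, that $j^*M$ is ULA whenever its stalk at $[w]$ is dualizable; second, that $j_!$ and $i_*$ preserve ULA-ness over $T$.

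For the first point, Lemma \ref{lemma: structure of Orbits in Bando affine Grassmannian}(a),(b) gives $\Hk{T}^w\simeq[T/\Stab_{L^+\mcG'}(w)_T]$, with stabilizer $L^+\mcG_\Omega$. By Lemma \ref{Lemma: Quoient by congruence subgroup base changed from a field} this group is, up to a pro-unipotent part that is invisible by homotopy invariance, a perfectly smooth group over $T$ with connected fibres. Pulling back along the smooth cover $T\to\Hk{T}^w$ then identifies $\DA(\Hk{T}^w)$ fully faithfully with a subcategory of $\DA(T)$; for connected fibres this is a full subcategory, the equivariance being a property and not extra data. Under this identification ULA-ness over $T$ becomes dualizability in $\DA(T)$. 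For the second point, $i_*$ is proper pushforward and preserves ULA-ness. For $j_!$ we use that the closure $\Gr^{B,\le w}_{\mcG}$ is perfectly proper over $\Divtil$ by Lemma \ref{Lemma: Bando affine Grassmannian ind-projective for Pappas--Zhu group schemes}. Also, $j_!$ of a ULA object on the open stratum is ULA, because $j_!=\bar\jmath_*\circ$ (extension by zero within the closure) and the boundary inclusion is compatible with the stratification.

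The main obstacle is this last assertion. In \cite{farguesscholzeGeometrization} it is handled by an argument on Demazure-type resolutions, i.e.\ convolution Bott--Samelson maps, which are proper and smooth over the base. I would first reproduce this: write $j_!$ of a constant motive as a proper pushforward from a Demazure resolution $\widetilde X_w\to \Gr^{B,\le w}_{\mcG}$ over $\Divtil$, built from the Iwahori Pappas--Zhu group schemes, together with $!$-extensions along relative normal crossings boundary divisors. That boundary is smooth over $T$, so ULA-ness follows. The fallback is the generic ULA criterion of \cite{preisUlaMotives}: stratification by cells smooth over $T$, with locally constant dualizable restrictions, implies ULA. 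This requires knowing that the stratification is ``Whitney-like'' over $T$, which again comes from the Demazure resolutions.
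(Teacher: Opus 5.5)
Your route is the paper's: its proof just invokes \cite[Proposition VI.6.5]{farguesscholzeGeometrization}. That argument has three ingredients: induction along the Schubert stratification, the identification of ULA objects on a single stratum $\Hk{T}^w\simeq[T/P_{w,T}]$ with those having dualizable stalk $[w]^*M$, and the fact that extension by zero from a stratum preserves ULA-ness.

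\textbf{The ($\Rightarrow$) direction fails as written.} Pullback along $\dot w\colon T\to\Gr^B_{\mcG,T}$ is not a base change in $T$. ULA-ness is stable under base change along $T'\to T$ and under smooth pullback over $T$, but not under pullback along arbitrary $T$-morphisms such as sections. Here is a counterexample. Take $T=\mathbb{A}^1_k$, $Y=\mathbb{A}^1_k\times T$, and let $j$ be the inclusion of the complement of the diagonal. The $T$-automorphism $(x,t)\mapsto(x-t,t)$ turns $j_!\mathbbm{1}$ into $j^0_!\mathbbm{1}\boxtimes\mathbbm{1}_T$, where $j^0\colon\mathbb{A}^1_k\smallsetminus\{0\}\hookrightarrow\mathbb{A}^1_k$, and this object is ULA over $T$. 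Yet its pullback along $t\mapsto(0,t)$ is the extension by zero of $\mathbbm{1}$ from $T\smallsetminus\{0\}$, which is not dualizable.

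So equivariance must be used, and the available way is the same recollement induction you run for ($\Leftarrow$). Let $u$ be an open stratum of the support with closed complement $z$. Then $u^*M$ is ULA, hence $u_!u^*M$ is ULA \emph{by the key input}, hence so are $z_*z^*M$ and $z^*M$. Iterating shows every restriction $M|_{\Hk{T}^w}$ is ULA, and only then does the single-stratum criterion give dualizability of $[w]^*M$. Both directions therefore hinge on ``$u_!$ preserves ULA objects'', not only ($\Leftarrow$).

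\textbf{The full-faithfulness claim is false.} You assert that pullback along $T\to\Hk{T}^w$ embeds $\DA(\Hk{T}^w)$ fully faithfully into $\DA(T)$, i.e.\ that equivariance is a property. This fails for derived categories of motives. Up to the pro-unipotent part, $\Hk{T}^w$ is $[T/(H_w\times_kT)]$, with $H_w$ connected but in general containing a torus, whose classifying stack has non-trivial motivic cohomology (Chern classes). Concretely, for the atlas $\pi\colon T\to[T/(H_w\times_kT)]$, the object $\pi_*\mathbbm{1}$ is ULA, since its pullback to $T$ is the relative motive of $H_w\times_kT$. But it is not pulled back from $T$.

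The single-stratum criterion itself survives, because it only uses the definition via the atlas and smooth locality of ULA-ness. Your Demazure step, however, only computes $u_!$ of \emph{constant} motives. You additionally need that $\DA\ula(\Hk{T}^w)$ is generated under shifts, finite colimits and retracts by objects $h_T^{w*}B$, with $B\in\DA\rig(T)$ and $h^w_T\colon\Hk{T}^w\to T$ the structure map. For those objects the projection formula gives $u_!h_T^{w*}B\simeq u_!\mathbbm{1}\otimes h_T^*B$. This generation statement needs its own argument, using the connectedness of $H_w$.
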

\begin{proof}
The proof of \cite[Proposition VI.6.5]{farguesscholzeGeometrization} applies
without change.
\end{proof}
\begin{Corollary}\label{corollary:closed}
Let $T$ be a perfect $\Divtilk$-scheme, and let $j$ be the locally closed
immersion of a Schubert stratum. The category $\DA\ula(\Hk{T})$ is stable
under Verdier duality, tensor products, internal homs, and
$j_!j^*,j_*j^*,j_!j^!,j_*j^!$. Moreover, these operations commute with
pullbacks in $T$.
\end{Corollary}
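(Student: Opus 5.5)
The plan follows \cite[Proposition VI.6.5 and Corollary VI.6.6]{farguesscholzeGeometrization}, with Lemma \ref{lemma:stalkwise} as the key input. By Lemma \ref{lemma:stalkwise}, membership of a bounded $M$ in $\DA\ula(\Hk{T})$ is detected by dualizability of the stalks $[w]^*M\in\DA(T)$. Every claim therefore reduces to two things: a computation of the stalks of the relevant operations, and the closure properties of $\DA\rig(T)$.

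\textbf{Stability under the six operations.} First, boundedness is clearly preserved by all listed operations, since they do not enlarge supports beyond a fixed $\Hk{T,i}$.

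For $j_!j^*$ and $j_*j^*$ with $j\colon\Hk{T}^w\hookrightarrow\Hk{T}$, I would use Lemma \ref{lemma: structure of Orbits in Bando affine Grassmannian}(a),(b). These give $\Hk{T}^w\simeq[T/\Stab(w)_T]$ with $\Stab(w)\simeq L^+\mcG_\Omega$. By Lemma \ref{Lemma: Quoient by congruence subgroup base changed from a field} and homotopy invariance, the kernel of $L^+\mcG_\Omega\to\overline{\mcG_\Omega}$ is pro-unipotent. Hence $\DA(\Hk{T}^w)$ embeds into motives on $[T/\overline{\mcG_\Omega}^{\perf}_T]$, and the ULA objects there are those whose pullback to $T$ is dualizable.

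For $j_!$ the stalks are explicit: $[w]^*j_!N=[w]^*N$ and other stalks vanish. For $j_*$ I would argue as in Fargues--Scholze. Choose a filtered presentation, restrict to $\Hk{T,i}$, and reduce to $q_T^*$ on the finite-type quotient $[L^{(m_i)}\mcG'_T\backslash X_{i,T}]$. There $\Gr^{B,w}_{\mcG,T}\to T$ is perfectly smooth by Lemma \ref{lemma: structure of Orbits in Bando affine Grassmannian}(b), and $X_{i,T}\to T$ is perfectly proper by Lemma \ref{Lemma: Bando affine Grassmannian ind-projective for Pappas--Zhu group schemes}.

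ULA objects for motives (\cite[Section 3.2]{preisUlaMotives}) are stable under $f_*$ for $f$ proper over the base and under $j_*$, $j^!$ when the strata are smooth over $T$. To see this, factor $j_*$ via the Demazure-type resolution or via the closure in $X_{i,T}$. The cleanest route is the characterization of ULA objects via $\Lambda$-linear dualizability in the category of cohomological correspondences over $T$. Preis shows that this characterization is stable under the operations along $T$-proper maps and $T$-smooth maps. For the stratum inclusion I would use that $\Gr^{B,\le w}_{\mcG,T}\setminus\Gr^{B,w}_{\mcG,T}$ is a union of strata, and induct on $w$ with localization triangles.

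The cases $j_!j^!$ and $j_*j^!$ follow by Verdier duality, once duality is handled.

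\textbf{Verdier duality, tensor, internal hom.} Verdier duality relative to $T$ preserves ULA objects on $X_{i,T}$ by \cite{preisUlaMotives}; on the stack one transports it using smoothness of the quotient map at finite level. Tensor products are handled stalkwise: $[w]^*(M\otimes N)=[w]^*M\otimes[w]^*N$, and $\DA\rig(T)$ is closed under $\otimes$. Internal homs follow from $\underline{\Hom}(M,N)=\mathbb{D}(M\otimes\mathbb{D}N)$ for ULA $M$.

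\textbf{Base change and main obstacle.} Commutation with pullback $g\colon T'\to T$ holds trivially for $j^*$ and $j_!$. For $j_*$, $j^!$ and $\mathbb{D}$, it is the standard property that ULA objects satisfy base change for these functors, again from \cite{preisUlaMotives}.

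The main obstacle is the $j_*$ (equivalently $j^!$) case. There one must genuinely show that $j_*$ of a ULA object on a stratum is ULA, which needs smoothness of the stratum and properness of its closure over $T$ uniformly in $T$. This is exactly what Lemmas \ref{lemma: structure of Orbits in Bando affine Grassmannian} and \ref{Lemma: Bando affine Grassmannian ind-projective for Pappas--Zhu group schemes} provide over $\Divtil$.
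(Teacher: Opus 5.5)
\textbf{Gap in the $j_*j^*$ step.} Your overall plan is the paper's, and your treatment of $j_!j^*$, tensor products, Verdier duality and internal Hom is fine. The paper's proof is just Lemma~\ref{lemma:stalkwise} followed by the argument of \cite[Corollary VI.6.6]{farguesscholzeGeometrization}. The gap is in the step you flag as the main obstacle, $j_*j^*$. Your $j_!j^!$ case also depends on it, since you derive it by duality from $j_*j^*$.

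The principle you invoke there is false: ULA objects need not be stable under $j_*$ and $j^!$ just because the strata are smooth and the closure is proper over $T$. Take $X=\mathbb{P}^1\times\mathbb{A}^1\to T=\mathbb{A}^1$ and $j\colon U=X\setminus\{(0,0)\}\hookrightarrow X$. Here $U$ is smooth over $T$ and $X$ is proper over $T$. Yet $j_*\Lambda$ is not ULA. Already after \'etale realization, its stalk at $(0,0)$ is $\Lambda\oplus\Lambda(-2)[-3]$, whereas for a ULA object it would have to agree with the nearby cycles of the generic fibre, which are $\Lambda$.

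Neither mechanism you suggest closes the gap. Proper pushforward and smooth pullback do not cover $j_*$ along an open immersion. An induction on $w$ via $j_!N\to j_*N\to i_*i^*j_*N$ only shows that $j_*N$ is ULA if and only if the boundary stalks $[v]^*j_*N$ for $v<w$ are dualizable. Those stalks are exactly what is unknown, and the induction hypothesis says nothing about them.

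\textbf{The missing idea: Verdier duality.} Given Lemma~\ref{lemma:stalkwise}, duality makes this case formal. Let $N$ be ULA on the stratum $\Hk{T}^w\simeq[T/P_{w,T}]$.
\begin{enumerate}
\item The dual $\mathbb{D}N$ is again ULA. At finite level $[w]\colon T\to\Hk{T}^w$ is smooth, so $[w]^*\mathbb{D}N$ is a shift and twist of the dual of $[w]^*N$.
\item Hence $j_!\mathbb{D}N$ is ULA by the stalkwise criterion.
\item Since $\mathbb{D}$ preserves ULA objects, the object
\[
j_*N\simeq j_*\mathbb{D}\mathbb{D}N\simeq\mathbb{D}(j_!\mathbb{D}N)
\]
is ULA.
\item Similarly, $j^!M\simeq\mathbb{D}j^*\mathbb{D}M$ for ULA $M$.
\end{enumerate}
The same identities give base change for $j_*$ and $j^!$ on ULA objects. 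It follows from base change for $j_!$ and $j^*$, and for $\mathbb{D}$ on ULA objects. This is not a general property of ULA objects, contrary to what you assert: in the example above, $j_*\Lambda$ does not commute with restriction to $t=0$. With this, every claim follows from Lemma~\ref{lemma:stalkwise}. No Demazure resolution, and no geometric input beyond what already went into that lemma, is needed.
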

\begin{proof}
	By Lemma \ref{lemma:stalkwise}, the proof of
	\cite[Corollary VI.6.6]{farguesscholzeGeometrization} applies without
	change.
\end{proof}
The transition functors given by pushforward along the closed immersions
identify the bounded category with
\[
\DA(\Hk{T})^{\bd}\cong
\underset{i\in I}\colim\DA(\Hk{T,i}).
\]
Restricting to ULA motives gives a canonical equivalence
\begin{equation}\label{eq:filteredcolim}
\DA\ula(\Hk{T})\cong
\underset{i\in I}\colim\DA\ula(\Hk{T,i}).
\end{equation}
\subsection{Convolution}\label{subsection: Convolution}
For the remainder of this subsection, assume that $\mcG'=\mcG$. 
Our main Theorem \ref{Theorem: Main} claims our equivalence to be monoidal with respect to convolution on the full $\infty$-category. It is subtle to show that convolution really satisfies all the higher coherence needed to make it monoidal on the $\infty$-category; for étale sheaves on $p$-adic Hecke stacks there is a very nice and detailed explanation in \cite[Section 4.2]{padicCentralFunctor}.
In the context relevant for us, the necessary results have recently been obtained by Chowdhury in \cite[Section 2.3]{Chiro}.

Now let us introduce the convolution product in our setting.
For a
perfect $\Divtilk$-scheme $T$, let
\[
    \Hk{\mcG,T}^{(2)}
    =\left[L^+\mcG_T\backslash
      L\underline G_T\mathop{\times}^{L^+\mcG_T}
      \Gr^B_{\mcG,T}\right]
\]
be the convolution Hecke stack. Multiplication and the two projections give
the convolution correspondence
\[
    \Hk{\mcG,T}
    \xleftarrow{\ m_T\ }
    \Hk{\mcG,T}^{(2)}
    \xrightarrow{\ p_T\ }
    \Hk{\mcG,T}\times_T\Hk{\mcG,T}.
\]
For $M,N\in\DA\ula(\Hk{\mcG,T})$, put
\[
    M\star_TN=m_{T!}p_T^*(M\boxtimes_TN).
\]
On any pair of finite Schubert bounds, the morphism $m_T$ is perfectly
proper and its image is contained in a finite union of Schubert varieties.
Hence this formula defines a product on bounded motives.

Write
\[
    \mathcal A_T=\DA\rig(T),\qquad
    \mathcal C_T=\DA\ula(\Hk{\mcG,T}).
\]
Recall that pullback along the structure morphism
$h_T:\Hk{\mcG,T}\to T$ equips $\DA(\Hk{\mcG,T})$ with a
$\DA(T)$-module structure. This restricts to an action
\[
    \mathcal A_T\otimes\mathcal C_T\longrightarrow\mathcal C_T.
\]
On objects, it is given by $A\cdot M=h_T^*A\otimes M$.

\begin{Lemma}\label{lemma:convolutionaction}
    Convolution defines an exact $\mathcal A_T$-bilinear bifunctor
    \[
        \star_T:\mathcal C_T\times\mathcal C_T\longrightarrow\mathcal C_T.
    \]
    Equivalently, it induces an exact $\mathcal A_T$-linear functor
    \[
        \mathcal C_T\underset{\mathcal A_T}{\otimes}\mathcal C_T
        \longrightarrow\mathcal C_T.
    \]
    Furthermore, convolution defines a monoidal structure on $\mcC_{T}$.
\end{Lemma}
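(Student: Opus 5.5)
The plan has three steps: show that $\star_T$ preserves ULA objects, establish $\mcA_T$-bilinearity via the projection formula, and import the higher coherences from Chowdhury's framework.

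\textbf{Preservation of ULA objects.} I reduce to finite level. Fix bounds $\Hk{T,i}$ and $\Hk{T,j}$ supporting $M$ and $N$. The convolution correspondence restricts to the twisted product $\Hk{T,i}\tilde\times\Hk{T,j}$. This twisted product is the quotient of $X_{i,T}\tilde\times X_{j,T}$ by $L^{(m)}\mcG_T$ for $m$ large, by Lemma \ref{Lemma: Quoient by congruence subgroup base changed from a field} and homotopy invariance.

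On the Grassmannian level, $X_{i}\tilde\times X_j\to X_i$ is étale locally on the base a product $X_i\times_T X_j$. This uses that $L^+\mcG\to L^{(m)}\mcG$ admits sections étale locally, since $L^{(m)}\mcG$ is perfectly smooth by Lemma \ref{Lemma: Quoient by congruence subgroup base changed from a field}. Hence $p_T^*(M\boxtimes_T N)$ is ULA over $T$, because exterior products of ULA objects are ULA (Preis, \cite{preisUlaMotives}). Next, $m_T$ is perfectly proper on these bounds by Lemma \ref{Lemma: Bando affine Grassmannian ind-projective for Pappas--Zhu group schemes}, and its image lies in finitely many Schubert varieties. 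Proper pushforward preserves ULA-ness over $T$, again by Preis. So $M\star_TN$ is bounded and ULA.

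Exactness in each variable is clear, since every functor involved is exact. Alternatively, one can verify ULA-ness via Lemma \ref{lemma:stalkwise}, by computing the stalks $[w]^*(M\star_TN)$ through proper base change as pushforwards of dualizable objects along perfectly proper perfectly smooth-stratified maps. This is a good cross-check, but I would take the Preis route.

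\textbf{$\mcA_T$-bilinearity.} This follows from the projection formula for $m_{T!}$ combined with $h_T\circ m_T=h_T^{(2)}$ and compatibility of $\boxtimes_T$ with the $\DA(T)$-action. Together these give natural equivalences
\[
(A\cdot M)\star_T N\simeq A\cdot(M\star_TN)\simeq M\star_T(A\cdot N).
\]
To pass from a bilinear bifunctor to a functor out of $\mcC_T\otimes_{\mcA_T}\mcC_T$, I would promote these equivalences to a coherent structure. The cleanest way is to note that $\mcC_T=\colim_i\DA\ula(\Hk{T,i})$ by \eqref{eq:filteredcolim}, and that all functors arise from a correspondence formalism of $\DA(T)$-linear six functors. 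The $\DA(T)$-linearity of $f^*$, $f_!$ and $\boxtimes_T$ is part of the six-functor formalism on perfect prestacks over $T$.

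\textbf{Monoidal structure.} This is the main obstacle: the associativity and higher coherences on the full $\infty$-category. Here I would invoke Chowdhury \cite[Section 2.3]{Chiro}. His construction produces an $E_1$-algebra structure on $\DA(\Hk{\mcG,T})^{\bd}$ from the Segal object
\[
[n]\mapsto [L^+\mcG_T\backslash L\underline G_T\times^{L^+\mcG_T}\cdots\times^{L^+\mcG_T} L\underline G_T/L^+\mcG_T],
\]
which is the Čech nerve of $[*/L^+\mcG_T]\to[*/L\underline G_T]$. The monoidal structure then comes from the functor out of the $(\infty,2)$-category of correspondences, with $!$-pushforward along the perfectly ind-proper multiplication maps.

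The hypotheses needed there are ind-properness of $m$ on bounded pieces and representability. These are supplied by Proposition \ref{Proposition: representability Bando affine Grassmannian} and Lemma \ref{Lemma: Bando affine Grassmannian ind-projective for Pappas--Zhu group schemes}, working over the base $T$ rather than a point. Finally, the first step shows that the full subcategory $\mcC_T$ is closed under the multiplication and contains the unit (the pushforward of the unit along $[e]:T\to\Hk{\mcG,T}$, whose stalks are $0$ or $\Lambda$, hence dualizable by Lemma \ref{lemma:stalkwise}). Therefore $\mcC_T$ inherits an $\mcA_T$-linear monoidal structure.
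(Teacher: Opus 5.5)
Your proposal is correct and follows essentially the same route as the paper. ULA objects are preserved because relative exterior products, pullback along $p_T$, and proper pushforward along $m_T$ all preserve them (Preis); $\mcA_T$-bilinearity comes from the projection formula together with the linearity of $\boxtimes_T$ and $p_T^*$; and the coherent monoidal structure is imported from \cite[Section 2.3]{Chiro}.

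There are two minor differences. First, the paper handles $p_T^*$ by observing that on finite Schubert supports $p_T$ is an $L^{(N)}\mcG_T$-torsor, hence smooth, rather than using \'etale-local triviality of the twisted product. Second, the unit should be the pushforward of $1$ from the closed substack $[T/L^+\mcG_T]\subset\Hk{\mcG,T}$, not along the section $T\to\Hk{\mcG,T}$; the latter is not the convolution unit, although your stalk computation is correct for the true unit.
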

\begin{proof}
    Since ULA motives are dualizable for cohomological correspondences,
    relative external products preserve them by
    \cite[Definition 3.2.2]{preisUlaMotives}. On finite
    Schubert supports, all actions factor through
    \[
        L^{(N)}\mcG_T
        =L^+\mcG_T/(L^+\mcG_T)^{\geq N}
    \]
    for $N$ sufficiently large. On these supports, $p_T$ is represented by an
    $L^{(N)}\mcG_T$-torsor and is therefore smooth. Proper
    pushforward and smooth pullback preserve ULA motives by
    \cite[Lemma 3.2.8]{preisUlaMotives}. It follows that convolution restricts
    to $\mathcal C_T$.

    The relative external product is $\mathcal A_T$-bilinear, pullback along
    $p_T$ is $\mathcal A_T$-linear, and the projection formula equips
    $m_{T!}$ with an $\mathcal A_T$-linear structure. These are structures
    on functors of module categories, with their standard associativity and
    unit coherences. Their composite is therefore $\mathcal A_T$-bilinear.
    The second assertion follows from the universal property of the relative
    tensor product.
    Finally, the claim that convolution then defines a monoidal structure on $\mcC_{T}$ is explained in \cite[Section 2.3]{Chiro}.
\end{proof}
The following standard lemma follows from proper base change and compatibility of external products with pullback.
\begin{Lemma}\label{lemma:convolutionbasechange}
    Let $f:T'\to T$ be a morphism of perfect schemes over $\Divtilk$, and let
    $\widetilde f:\Hk{\mcG,T'}\to\Hk{\mcG,T}$ be the induced morphism. For
    $M,N\in\mathcal C_T$, there is a canonical equivalence
    \[
        \widetilde f^*(M\star_TN)
        \simeq
        \widetilde f^*M\star_{T'}\widetilde f^*N.
    \]
\end{Lemma}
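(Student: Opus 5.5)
The plan is to observe that every ingredient in $M\star_TN=m_{T!}p_T^*(M\boxtimes_TN)$ is compatible with base change along $f$. We then compose the resulting equivalences, checking on finite Schubert bounds where everything is representable by finite-type quotient stacks.

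First, the convolution diagram itself is compatible with base change. The convolution Hecke stack, $m_T$ and $p_T$ are all defined over $\Divtilk$, so
\[
\Hk{\mcG,T'}^{(2)}\simeq\Hk{\mcG,T}^{(2)}\times_T T',
\]
with $m_{T'},p_{T'}$ the base changes of $m_T,p_T$. Write $\widetilde f^{(2)}$ for the induced map on convolution stacks and $\widetilde f\times\widetilde f$ for the map on $\Hk{\mcG,T'}\times_{T'}\Hk{\mcG,T'}$. Both squares
\[
(m_{T'},m_T,\widetilde f^{(2)},\widetilde f),\qquad (p_{T'},p_T,\widetilde f^{(2)},\widetilde f\times\widetilde f)
\]
are cartesian.

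We then chain three equivalences:
\[
\widetilde f^*m_{T!}\simeq m_{T'!}\widetilde f^{(2)*},\qquad
\widetilde f^{(2)*}p_T^*\simeq p_{T'}^*(\widetilde f\times\widetilde f)^*,\qquad
(\widetilde f\times\widetilde f)^*(M\boxtimes_TN)\simeq \widetilde f^*M\boxtimes_{T'}\widetilde f^*N.
\]
The first is proper base change. The second is just functoriality of $*$-pullback, since $p_T\circ\widetilde f^{(2)}=(\widetilde f\times\widetilde f)\circ p_{T'}$. The third holds because $M\boxtimes_TN=\mathrm{pr}_1^*M\otimes\mathrm{pr}_2^*N$ and $*$-pullback is symmetric monoidal and functorial. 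Composing them gives the claimed equivalence canonically.

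The only point needing care is proper base change on stacks, which I expect to be the main obstacle. I would reduce to a finite-type situation as follows. $M$ and $N$ are bounded, so they are supported on some $\Hk{T,i}$, and by the discussion preceding Lemma \ref{lemma:convolutionaction}, $m_T$ restricted to the corresponding finite piece of the convolution stack lands in some $\Hk{T,j}$. All group actions there factor through $L^{(N)}\mcG$ for $N$ large. Using the equivalence $\DA(\Hk{T,i})\simeq\DA([L^{(m_i)}\mcG_T\backslash X_{i,T}])$, we work with quotients of perfectly finitely presented schemes by perfectly smooth groups. On these, $m_T$ is induced by a $L^{(N)}\mcG$-equivariant perfectly proper map. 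Proper base change for $\DA$ of such quotient prestacks then follows from the scheme case by smooth descent along the Čech nerve of the atlas, using the extension of $\DA$ to perfect prestacks recalled in the Notation.

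Finally, the resulting equivalences are compatible with the transition functors in \eqref{eq:filteredcolim}, which are closed pushforwards and hence commute with $\widetilde f^*$ by base change. The equivalence is therefore independent of the chosen bound and canonical.
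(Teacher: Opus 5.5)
Your proposal is correct and follows the same route as the paper, which justifies the lemma in one line: proper base change for $m_T$, together with the compatibility of relative external products and pullback along $p_T$ with $*$-pullback along $f$. Your reduction to finite Schubert bounds and finite-type quotient stacks, and the check of compatibility with the transition functors, spell out details the paper leaves implicit.
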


\subsection{Generic equivalence}
We work with the pair $(\mcG^\prime,\mcG)$ and assume
$\mcG=\mcG^{\prime}$ only for convolution.
Let $X_e=D((1-x)^{2e}+x^{2e})\subseteq \mathbb{A}^{1,\perf}_{k}$.
Let us define the path
$$
h_{e}\colon X_{e}\rightarrow \GL_{2,k}^{\perf}\simeq \Divtilk
$$
given by
$$
x\longmapsto
    \begin{pmatrix}
        (1-x)^{e}&  -x^{e}\\
        x^{e}&(1-x)^{e}
    \end{pmatrix}.
$$
The localization performed to define $X_e$ ensures that this is a well-defined morphism.
In the following let us for simplicity just write $X:=X_e$ and $h:=h_e$.
Note that the points $0$ and $1$ in $\mathbb{A}^{1,\perf}_{k}$ actually lie in $X$ and that $h(0)=(\pi,t)$ and $h(1)=(-t,\pi)$.
Furthermore, note that the reflection isomorphism $r\colon \mbZ[x]\rightarrow \mbZ[x]$ given by $x\mapsto 1-x$ induces a reflection isomorphism $r\colon X\rightarrow X$.
This isomorphism has the property that $r(0)=1$.
Let $\eta\in X$ be the generic point of $\mathbb{A}^{1,\perf}_{k}$.
Let us write $X_{\eta}=\Spec(\mcO_{X_{\eta}})$ for the spectrum of the residue field at $\eta$.
Then we have that $\mcO_{X_{\eta}}=k(x)^{\perf}$, i.e. the perfection of the field of rational functions in $x$ over $k$.

Let $S_{a}$ be the perfection of the strict henselization of $D((1-x)^{2e}+x^{2e})\subset \mathbb{A}^{1}_{k}$ at $a\in \lbrace 0,1\rbrace$.
We have morphisms $S_{a}\rightarrow X$ and we denote by $S_{a,\eta}=S_{a}\times_{X}X_{\eta}$.
With this notation, the following isomorphism holds
$$
S_{0,\eta}\simeq S_{1,\eta}\times_{X_{\eta,r}}X_{\eta}.
$$
\begin{proposition}[Generic equivalence]\label{prop: generic equivalence}
    There is an isomorphism
    \[
        \rho_\eta:\Hk{S_{0,\eta}}\isomto\Hk{S_{1,\eta}}.
    \]
    Moreover, the induced equivalence
    $$ (\rho_\eta^{-1})^*:\DA\ula(\Hk{S_{0,\eta}})
    \isomto\DA\ula(\Hk{S_{1,\eta}}) $$
    is monoidal for convolution.
\end{proposition}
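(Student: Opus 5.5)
The plan is to build $\rho_\eta$ from an isomorphism of the relevant Bando rings that is compatible, up to a controlled rescaling of $u$, with the $\mcO[\![u]\!]$-algebra structures. I then transport the Pappas--Zhu group schemes, loop groups, Grassmannians and Hecke stacks along it.

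Write $x$ for the coordinate on $S_{0,\eta}$, and use the identification $S_{0,\eta}\simeq S_{1,\eta}\times_{X_\eta,r}X_\eta$. The two families of divisors are then $h(x)$ and $h(1-x)$. So for a perfect $\mcO_{S_{0,\eta}}$-algebra $R$ I must compare
\[
B^+_{h(x)}(R)=W_{\mcO}(R)[\![t]\!]/\bigl([1-x]^e\pi-[x]^et\bigr),
\qquad
B^+_{h(1-x)}(R)=W_{\mcO}(R)[\![t]\!]/\bigl([x]^e\pi+[1-x]^et\bigr).
\]
These carry $\xi_2=[x]^e\pi+[1-x]^et$ and $\xi_2'=[1-x]^e\pi+[x]^et$ respectively.

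\textbf{Ring comparison.} Over $\eta$ both $x$ and $1-x$ are units. In the first ring $\pi$ equals $[x/(1-x)]^e\,t$, and in the second $\pi$ equals $-[(1-x)/x]^e\,t$. Both rings are thus $W_\mcO(R)[\![t]\!]$ modulo an element of the form $\pi-\lambda^e t$ with $\lambda$ a Teichmüller unit. The substitution $t\mapsto \mu^e t$, with $\mu$ a Teichmüller unit chosen so that the two relations match, gives a $W_\mcO(R)$-algebra isomorphism. Here $\mu$ is built from $[x]$, $[1-x]$ and an $e$-th root of $-1$; the latter exists in $k$ since $(e,p)=1$.

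Next I compute the image of $\xi_2$. In the first ring $\xi_2\equiv t\cdot[(x^{2e}+(1-x)^{2e})/(1-x)^e]$. An analogous formula holds for $\xi_2'$ in the second ring, with the same factor $x^{2e}+(1-x)^{2e}$; this is exactly why $X_e$ is the complement of this polynomial. The goal is a statement of the form
\[
\phi(\xi_2)=[c]^e\,\xi_2' \quad\text{for a unit } c\in R .
\]
If the ratio is not literally an $e$-th power, I would use that $e$ is prime to $p$ and that $S_{1,\eta}$ lives over a strictly henselian base. I would then adjoin an $e$-th root, or absorb the discrepancy by a finite étale base change, which does not affect the statement generically. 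I expect this bookkeeping (signs, and the $e$-th root) to be the main obstacle.

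\textbf{Transport of group schemes.} The automorphism $\alpha_c\colon \mcO[u]\to\mcO[u]$, $u\mapsto c^eu$, lifts to $\mcO_0[v]\to\mcO_0[v]$, $v\mapsto cv$. This lift commutes with $\gamma_0$, which scales $v$. Since we work over $k$ algebraically closed and may replace $\mcO$ by $\breve\mcO$, the $\sigma$-part causes no trouble; this is where the quasi-split reduction of Section 3 is used.
\begin{itemize}
\item Hence $\alpha_c$ preserves $\underline G$ and, in the split case, the schematic root datum $u^{\lceil f(a)\rceil}U_a$. By the uniqueness in \cite[1.2.13--14]{BruhatTitsII} and the construction recalled in the proof of Theorem \ref{Theorem: PZ}, one gets $\alpha_c^*\mcG_\Omega\simeq\mcG_\Omega$ for $\Omega=\mcF,\mcF'$.
\item Taking invariants and identity components handles the quasi-split case.
\item Consequently $\mcG(B^+_{h(x)}(R))\simeq\mcG(B^+_{h(1-x)}(R))$ and $\underline G(B_{h(x)}(R))\simeq\underline G(B_{h(1-x)}(R))$, functorially in $R$.
\end{itemize}
This yields isomorphisms $L^+\mcG'_{S_{0,\eta}}\simeq L^+\mcG'_{S_{1,\eta}}$ and $L\underline G_{S_{0,\eta}}\simeq L\underline G_{S_{1,\eta}}$, and after étale sheafification $\Gr^B_{\mcG,S_{0,\eta}}\simeq \Gr^B_{\mcG,S_{1,\eta}}$, equivariantly. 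Passing to quotients gives $\rho_\eta$.

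\textbf{Monoidality.} The isomorphism $\rho_\eta$ is induced by an isomorphism of the pairs $(L\underline G, L^+\mcG)$ over the identification of bases. It therefore extends compatibly to the convolution correspondence $\Hk{}\leftarrow\Hk{}^{(2)}\to\Hk{}\times\Hk{}$. Pullback along isomorphisms commutes with $m_!$, $p^*$ and $\boxtimes$, and preserves bounded ULA objects by Lemma \ref{lemma:stalkwise}.

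For the $\infty$-categorical coherence, I would note that the construction in \cite[Section 2.3]{Chiro} is functorial in isomorphisms of the input datum (group, loop group, base). So $(\rho_\eta^{-1})^*$ upgrades to a monoidal functor, exactly as in Lemma \ref{lemma:convolutionbasechange} but for an isomorphism of Hecke data rather than a base change.
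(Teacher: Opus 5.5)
Your proposal is correct and follows essentially the paper's route. You identify the two Bando rings over $X_\eta$; the paper does this by identifying both with $W_{\mcO}(R)$, sending $t$ to a unit multiple of $\pi$, which gives the same isomorphism as your substitution $t\mapsto\mu^e t$. You then observe that $\overline{\xi}_2(x)$ and $\overline{\xi}_2(1-x)$ differ by the $e$-th power of a Teichm\"uller unit, and lift the rescaling to $v\mapsto cv$ (the paper's $U\mapsto\nu^{-1}U$), which is $\gamma_0$-equivariant and preserves the split Pappas--Zhu group by Bruhat--Tits uniqueness. From there you pass to loop groups, Hecke stacks and convolution correspondences, exactly as the paper does.

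The bookkeeping you were worried about works out cleanly. The correct relation is $\xi_1(1-x)=[x]^e\pi-[1-x]^e t$, so in the second ring $\pi=[(1-x)/x]^e t$ with no sign, and no $e$-th root of $-1$ is needed. One then gets exactly $\overline{\xi}_2(x)=([1-x]/[x])^e\,\overline{\xi}_2(1-x)$. Your fallback via a finite \'etale base change is therefore never needed, which is fortunate, since it would not give an isomorphism over $S_{0,\eta}$ itself.
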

Let $\Hk{X_{\eta}}^{r}=\Hk{X_{\eta}}\times_{X_{\eta},r}X_{\eta}$. Proposition \ref{prop: generic
equivalence} follows from the following statement:
\begin{Lemma}\label{lem: key geometric statement about switch Hecke}
    There is an isomorphism
    $$
    \Hk{X_{\eta}}^{r}\simeq \Hk{X_{\eta}}
    $$
    over $X_{\eta}$. Moreover, this isomorphism is induced by identifications between the
    respective (positive) loop groups.
\end{Lemma}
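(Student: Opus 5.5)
The plan is to compare the two families of Bando rings directly. Over $X_\eta$ both of them become the ramified Witt vectors $W_{\mcO}(R)$. The two $\mcO[\![u]\!]$-algebra structures on them then differ only by rescaling $u$ by the $e$-th power of a unit. The Pappas--Zhu group scheme is insensitive to such a rescaling after adjoining $v=u^{1/e}$.

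\emph{Step 1: compute the rings.} Let $R$ be a perfect $X_\eta$-algebra. Then $x$ and $1-x$ are units in $R$, and so is $1+y^{2e}$, where $y:=x/(1-x)$; the last point uses $(1-x)^{2e}+x^{2e}\in R^{*}$. The pair pulled back along $h$ is
\[
\xi_1=[1-x]^e\bigl(\pi-[y]^e t\bigr),\qquad \xi_2=[1-x]^e\bigl([y]^e\pi+t\bigr).
\]
Applying $r$, i.e.\ $x\mapsto 1-x$, gives
\[
\xi_1'=[1-x]^e\bigl([y]^e\pi-t\bigr),\qquad \xi_2'=[1-x]^e\bigl(\pi+[y]^e t\bigr).
\]
Since $[y]$ is a unit, $B^{+}_{\xi_1,\xi_2}(R)=W_{\mcO}(R)[\![t]\!]/(t-[y]^{-e}\pi)$ and $B^{+}_{\xi_1',\xi_2'}(R)=W_{\mcO}(R)[\![t]\!]/(t-[y]^{e}\pi)$. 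Both are canonically identified with $W_{\mcO}(R)$ by eliminating $t$. Under these identifications,
\[
\xi_2\mapsto [1-x]^e[y]^{-e}\bigl(1+[y]^{2e}\bigr)\pi,\qquad \xi_2'\mapsto [1-x]^e\bigl(1+[y]^{2e}\bigr)\pi .
\]
Hence $\xi_2'=[y]^{e}\,\xi_2$ inside $W_{\mcO}(R)$, and likewise for $B$ after inverting $\xi_2$ resp.\ $\xi_2'$. This is the computation that explains the choice of exponent $e$ in $h_e$ and the sign in $h(1)=(-t,\pi)$. Moreover, it is functorial in $R$.

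\emph{Step 2: the key group-theoretic input.} We need isomorphisms of group schemes
\[
\mcG\otimes_{\mcO[u],\,u\mapsto\xi_2}W_{\mcO}(R)\;\simeq\;\mcG\otimes_{\mcO[u],\,u\mapsto[y]^e\xi_2}W_{\mcO}(R),
\]
and likewise for $\mcG'$ and for $\underline G$ over $B(R)$, functorial in $R$. Since $k=\overline{k}_F$, we may work over $\breve{\mcO}$ and assume $G$ quasi-split, as in Section \ref{Section: Hecke stack}; the relevant group is then just $\langle\gamma_0\rangle$.

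We would proceed as in the proof of Theorem \ref{Theorem: PZ}. Consider the finite extension $\widetilde{B}^{+}=B^{+}[X]/(X^e-\xi_2)$ and the automorphism of $W_{\mcO}(R)[v]$ given by $v\mapsto[y]v$. It sends $u=v^e\mapsto [y]^eu$. It commutes with $\gamma_0$, because $\gamma_0$ scales $v$ by the constant $\zeta$. In the split case, it carries $H$ and $T_H$ to themselves. It carries the schematic root datum $u^{\lceil f(a)\rceil}U_a$ to itself via the rescaling $x\mapsto[y]^{e\lceil f(a)\rceil}x$ on root-group coordinates. So, by uniqueness of birational group laws (\cite[1.2.13--14]{BruhatTitsII}), it induces a $\gamma_0$-equivariant isomorphism of the split group schemes $\mcH_\Omega$. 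Passing to $\gamma_0$-invariants of the Weil restriction and then to the identity component, both functorial operations, yields the desired isomorphism for $\mcG_\Omega$, with generic fiber the corresponding isomorphism for $\underline G$.

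\emph{Step 3: assemble.} Step 2 gives, functorially in $R$, identifications $L^{+}\mcG'(R)\simeq (L^{+}\mcG')^{r}(R)$, $L^{+}\mcG(R)\simeq (L^{+}\mcG)^{r}(R)$ and $L\underline G(R)\simeq (L\underline G)^{r}(R)$. These are compatible with the inclusions and group laws. Étale sheafifying the quotient gives $\Gr^{B,r}_{\mcG,X_\eta}\simeq\Gr^B_{\mcG,X_\eta}$, equivariantly for the identified $L^+\mcG'$-actions. Taking quotient stacks then gives $\Hk{X_\eta}^r\simeq\Hk{X_\eta}$ over $X_\eta$, induced by the loop group identifications as claimed.

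We expect Step 2 to be the main obstacle. One must check that the rescaling $v\mapsto[y]v$ really preserves the Pappas--Zhu integral model, including the affineness and connected-component steps, and not only its generic fiber. The uniqueness statement from birational group laws in the split case, combined with the functoriality of invariants and identity components, should suffice. A second point to watch is that $[y]\notin\breve{\mcO}$, so the rescaling lives over $W_{\mcO}(R)$ rather than over $\mcO[u]$. We would therefore construct the isomorphism after base change to $W_{\mcO}(R)[v]$, where the rescaling is defined.
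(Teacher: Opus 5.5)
Your proposal is correct and is essentially the paper's argument. Lemma \ref{lem: comparing Bando rings in the generic equivalence} identifies both Bando rings over $X_\eta$ with $W_{\mcO}(R)$ and shows $\overline{\xi}_2(x)=\nu^{-e}\,\overline{\xi}_2(1-x)$ with $\nu=[x]/[1-x]$ (your $[y]$). Lemma \ref{lem: comparing loop groups generic equivalence} then rescales the adjoined $e$-th root by $\nu^{-1}$ and uses the Bruhat--Tits uniqueness of the split Pappas--Zhu group scheme under unit rescaling of $v$, followed by $\gamma_0$-invariants and identity components, exactly as in your Steps 2--3.

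One harmless slip: over $\mcO_0[v]$ the root groups of $\mcH_\Omega$ are $v^{m_a}U_a$ (powers of $v$, not of $u$), so $v\mapsto[y]v$ multiplies root-group coordinates by $[y]^{\pm m_a}$ rather than $[y]^{e\lceil f(a)\rceil}$. The uniqueness argument is unaffected.
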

\begin{proof}[Proof of Proposition \ref{prop: generic equivalence}]
    Recall that the diagram
    \[
    \begin{tikzcd}
        S_{0,\eta} \arrow[r] \arrow[d]
            & S_{1,\eta} \arrow[d] \\
        X_\eta \arrow[r,"r"'] & X_\eta
    \end{tikzcd}
    \]
    is Cartesian. Pulling back the isomorphism of Lemma
    \ref{lem: key geometric statement about switch Hecke} along
    $S_{0,\eta}\rightarrow X_\eta$ and using this diagram gives
    \begin{equation}\label{eq:generic-stack-isomorphism}
        \rho_\eta:\Hk{S_{0,\eta}}
        \isomto
        \Hk{S_{1,\eta}}.
    \end{equation}
    Thus $(\rho_\eta^{-1})^*$ is an equivalence. The loop-group
    isomorphisms defining $\rho_\eta$ identify the convolution
    correspondences, so this equivalence is monoidal.
\end{proof}
It remains to prove Lemma
\ref{lem: key geometric statement about switch Hecke}. We first compare
the Bando rings.
\begin{Lemma}\label{lem: comparing Bando rings in the generic equivalence}
    Let $R$ be a perfect $X_{\eta}$-algebra, $(\xi_{1}(x),\xi_{2}(x))=h_{e}(x)\in \Divtilk(R)$ and consider also $(\xi_{1}(1-x),\xi_{2}(1-x))=h_{e}(r(x))\in \Divtilk(R)$.
    Then the following statements hold
    \begin{enumerate}
        \item[(a)] $B^{+}_{\xi_{1}(x),\xi_{2}(x)}(R)\simeq W_{\mcO}(R)$ and  $B^{+}_{\xi_{1}(1-x),\xi_{2}(1-x)}(R)\simeq W_{\mcO}(R)$, functorially in $X_{\eta}$-algebra homomorphisms $R\rightarrow R^{\prime}$,
        \item[(b)] there exists a unit $\lambda\in W_{\mcO}(R)$ of the form $\lambda=\nu^{-e}$, such that  under the identification in (a), $\overline{\xi}_{2}(x)\in B^{+}_{\xi_{1}(x),\xi_{2}(x)}(R)$ and $\overline{\xi}_{2}(1-x)\in B^{+}_{\xi_{1}(1-x),\xi_{2}(1-x)}(R)$ differ by $\lambda$, i.e.
        $$
        \overline{\xi}_{2}(x)=\lambda \overline{\xi}_{2}(1-x).
        $$
        This unit is also functorial for $X_{\eta}$-algebra homomorphisms $R\rightarrow R^{\prime}$.
        \item[(c)] There is a $\gamma_{0}$-equivariant isomorphism of $B^{+}_{\xi_{1}(x),\xi_{2}(x)}(R)\simeq B^{+}_{\xi_{1}(1-x),\xi_{2}(1-x)}(R)$-algebras
        $$
        \widetilde{B^{+}}_{\xi_{1}(x),\xi_{2}(x)}(R)\simeq \widetilde{B^{+}}_{\xi_{1}(1-x),\xi_{2}(1-x)}(R)
        $$
        in the notation of the proof of Theorem \ref{Theorem: PZ}.
        This isomorphism is functorial in $X_{\eta}$-homomorphisms $R\rightarrow R^{\prime}$.
    \end{enumerate}
\end{Lemma}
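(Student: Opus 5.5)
The plan is to compute both Bando rings explicitly. The key input is that on $X_\eta$ both $x$ and $1-x$ are units, as is $(1-x)^{2e}+x^{2e}$. Hence for a perfect $X_\eta$-algebra $R$ the Teichmüller lifts $[x],[1-x]\in W_{\mcO}(R)$ are units. By definition of $h_e$ we have
$$
\xi_1(x)=[1-x]^{e}\pi-[x]^{e}t,\qquad \xi_2(x)=[x]^{e}\pi+[1-x]^{e}t,
$$
and $\xi_i(1-x)$ is obtained by interchanging $[x]$ and $[1-x]$.

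For (a), since $[x]^e$ is a unit we may rewrite $\xi_1(x)=-[x]^e\,(t-[c]\pi)$ with $c=\bigl((1-x)/x\bigr)^{e}\in R^{*}$, using multiplicativity of Teichmüller lifts. Thus $B^+_{\xi_1(x),\xi_2(x)}(R)=W_{\mcO}(R)[\![t]\!]/(t-[c]\pi)$. I would show that the $W_{\mcO}(R)$-algebra map $W_{\mcO}(R)[\![t]\!]\to W_{\mcO}(R)$, $t\mapsto [c]\pi$, is well defined and surjective with kernel $(t-[c]\pi)$. It is well defined because $W_{\mcO}(R)$ is $\pi$-adically complete. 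For the kernel, write $f(t)-f([c]\pi)$ as $(t-[c]\pi)g(t)$; this needs convergence of $g$, which holds because $[c]\pi$ lies in the $\pi$-adic (topologically nilpotent) ideal. Alternatively, one can invoke the isomorphism $B^+\simeq \widehat{(W_{\mcO}(R)[\![t]\!]/(\xi_1))}_{(\xi_2)}$ recalled from \cite[Remark 2.3]{bandoComparison}. The same argument with $c'=c^{-1}=\bigl(x/(1-x)\bigr)^e$ gives $B^+_{\xi_1(1-x),\xi_2(1-x)}(R)\simeq W_{\mcO}(R)$. Everything is built from Teichmüller lifts of the fixed units $x,1-x$, so it is visibly functorial in $R\to R'$.

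For (b), I substitute $t=[c]\pi$ into $\xi_2(x)$:
$$
\overline{\xi}_2(x)=\pi\bigl([x]^{e}+[1-x]^{2e}[x]^{-e}\bigr)=\pi[x]^{-e}\bigl([x]^{2e}+[1-x]^{2e}\bigr).
$$
The factor $u:=[x]^{2e}+[1-x]^{2e}$ reduces modulo $\pi$ to $x^{2e}+(1-x)^{2e}\in R^{*}$, so it is a unit in the $\pi$-complete ring $W_{\mcO}(R)$. Note that $u$ is symmetric under $x\leftrightarrow 1-x$. Symmetrically, $\overline{\xi}_2(1-x)=\pi[1-x]^{-e}u$. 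Hence $\overline{\xi}_2(x)=\lambda\,\overline{\xi}_2(1-x)$ with
$$
\lambda=[1-x]^{e}[x]^{-e}=\nu^{-e},\qquad \nu=[x/(1-x)],
$$
which is functorial in $R$. This explains the choice of the exponents $e$ in the path $h_e$, namely that $\lambda$ should have an $e$-th root.

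For (c), recall $\widetilde{B^+}(R)=B^+(R)[X]/(X^e-\xi_2)$ with $\gamma_0 X=\zeta X$. Under the identifications of (a), I define $\widetilde{B^+}_{\xi_1(x),\xi_2(x)}(R)\to\widetilde{B^+}_{\xi_1(1-x),\xi_2(1-x)}(R)$ by $X\mapsto \nu^{-1}X'$. This is well defined since $(\nu^{-1}X')^e=\nu^{-e}\overline{\xi}_2(1-x)=\overline{\xi}_2(x)$ by (b). Its inverse is $X'\mapsto \nu X$. The map is $W_{\mcO}(R)$-linear and commutes with $\gamma_0$, because both sides scale $X$, resp. $X'$, by $\zeta$ and the coefficient $\nu^{-1}$ is $\gamma_0$-invariant. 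Functoriality is inherited from (a) and (b). The main obstacle is the careful verification in (a) that the quotient by $t-[c]\pi$ really is $W_{\mcO}(R)$, including the compatibility of the completions. The remaining steps are direct computations.
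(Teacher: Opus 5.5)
Your proposal is correct and takes essentially the same route as the paper. For (a) you write $\xi_1$ as a unit times $t-c$ with $c\in\pi W_{\mcO}(R)$ and apply the power-series quotient lemma. For (b) you compute $\overline{\xi}_2$ on both sides, where the symmetric factor $[x]^{2e}+[1-x]^{2e}$ cancels to leave $\lambda=[1-x]^e[x]^{-e}=\nu^{-e}$, and for (c) you rescale the adjoined $e$-th root by $\nu^{-1}$. Your extra checks (convergence of $g$, the inverse map, $\gamma_0$-invariance of $\nu$) only spell out details the paper leaves implicit.
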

\begin{proof}
    First note that our assumption explicitly means that
    $$
    \xi_{1}(x)=([1-x])^{e}\pi - [x]^{e}t
    $$
    and
    $$
    \xi_{2}(x)=[x]^{e}\pi+([1-x])^{e}t,
    $$
    while
    $$
    \xi_{1}(1-x)=[x]^{e}\pi-[1-x]^{e}t
    $$
    and
    $$
    \xi_{2}(1-x)=[1-x]^{e}\pi+[x]^{e}t.
    $$
    Here we write $x\in R$ for the image of $x$ under the structure morphism $k(x)^{\perf}\rightarrow R$.
    Therefore, $[1-x], [x]\in W_{\mcO}(R)[\![t]\!]$ are units, since they are already units in $W_{\mcO}(R)$ since this is true modulo $\pi$ and since $W_{\mcO}(R)$ $\pi$-adically separated and complete.
    Observe that we can write both $\xi_{1}(x)$ and $\xi_{1}(1-x)$ in the form $v(t-c)$, where $v\in W_{\mcO}(R)^{*}$ and $c=w\cdot \pi$ and $w\in W_{\mcO}(R)^{*}$.
    By Lemma \ref{lem: quotient by power series over pi adic ring} below, we obtain the isomorphisms in (a).
    Note that the isomorphism constructed from Lemma \ref{lem: quotient by power series over pi adic ring} does indeed satisfy the desired functoriality as claimed in (a).
    Now we turn to statement (b).
    Using the relations enforced in $B^{+}_{\xi_{1}(x),\xi_{2}(x)}(R)$ resp. in $B^{+}_{\xi_{1}(1-x),\xi_{2}(1-x)}(R)$, we obtain that
    $$
    \overline{\xi}_{2}(x)=(\frac{([1-x])^{2e}+[x]^{2e}}{([x])^{e}})\cdot \pi
    $$
    and
    $$
    \overline{\xi}_{2}(1-x)=(\frac{[x]^{2e}+([1-x])^{2e}}{([1-x])^{e}})\cdot \pi.
    $$
    The ratio of these two scalars in front of $\pi$ is given by
    $$
    \frac{([1-x])^{e}}{[x]^{e}}=\lambda.
    $$
    This satisfies the properties claimed in (b).
    Let us write $\nu=\frac{[x]}{([1-x])}$.
    Now we turn to statement (c).
    Recall first that
    $\widetilde{B^{+}}_{\xi_{1}(x),\xi_{2}(x)}(R)=B^{+}_{\xi_{1}(x),\xi_{2}(x)}(R)[U]/(U^{e}-\xi_{2}(x))$ and $\widetilde{B^{+}}_{\xi_{1}(1-x),\xi_{2}(1-x)}(R)=B^{+}_{\xi_{1}(1-x),\xi_{2}(1-x)}(R)[U]/(U^{e}-\xi_{2}(1-x))$ with $\gamma_{0}$ action given in both cases by $\gamma_{0}(U)=\zeta\cdot U$.
    Then we can just multiply $U$ by the scalar $\nu^{-1}$ from above to define the desired isomorphism.
\end{proof}
\begin{Lemma}\label{lem: quotient by power series over pi adic ring}
    Let $A$ be an $a$-adically complete and separated ring and $c\in (a)$.
    Then the $A$-algebra homomorphism $\phi\colon A[\![t]\!]\rightarrow A$ defined by $t\mapsto c$ is surjective with kernel given by $(t-c)$.
\end{Lemma}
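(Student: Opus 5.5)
Surjectivity is immediate: for $b\in A$, the constant series $b$ maps to $b$. The content lies in identifying the kernel, and I will do this by a Weierstrass-type division by $t-c$. Clearly $(t-c)\subseteq\ker\phi$, since $\phi(t-c)=c-c=0$.

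For the reverse inclusion, I will show that every $f\in A[\![t]\!]$ can be written as
\[
f = (t-c)g + \phi(f), \qquad g\in A[\![t]\!].
\]
Then $\phi(f)=0$ gives $f\in(t-c)$. To produce $g$, I will write $f=\sum_n f_n t^n$ and use the identity $t^n - c^n = (t-c)\sum_{i+j=n-1} t^i c^j$. Summing formally, I expect
\[
g=\sum_{m\ge0}\Bigl(\sum_{n>m} f_n c^{\,n-1-m}\Bigr)t^m,
\]
together with $\phi(f)=\sum_n f_n c^n$.

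Both series are well defined because $c\in(a)$, so $f_n c^{\,n-1-m}\in (a^{\,n-1-m})$. These partial sums are Cauchy in the $a$-adic topology, and they converge because $A$ is $a$-adically complete; separatedness ensures the limits are unique. The same estimate shows that $\phi$ is well defined as a map on all of $A[\![t]\!]$.

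The main obstacle is verifying the identity $f=(t-c)g+\phi(f)$ coefficientwise, since it involves rearranging infinite sums. I would compare the $t^m$-coefficients of both sides.
\begin{itemize}
\item For $m\ge1$: the coefficient of $t^m$ in $(t-c)g$ is $g_{m-1}-cg_m$. Here $g_{m-1}=\sum_{n\ge m} f_n c^{\,n-m}$, while $cg_m$ is the same sum with the $n=m$ term omitted. Their difference is $f_m$.
\item For $m=0$: the coefficient of $t^0$ is $-cg_0+\phi(f)=-\sum_{n\ge1}f_nc^n+\sum_{n\ge0}f_nc^n=f_0$.
\end{itemize}
Each of these manipulations is a termwise identity of $a$-adically convergent series. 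It is justified by continuity of multiplication by $c$ and of addition in the complete separated topology.

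Alternatively, one could note that $t-c$ is a non-zero-divisor and argue via completeness of $A[\![t]\!]$ with respect to $(a,t)$. The explicit division above is more direct, and I would present it.
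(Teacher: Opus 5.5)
Your proof is correct and matches the paper's argument: the same explicit division $f-\phi(f)=(t-c)g$ with $g_m=\sum_{n>m}f_n c^{\,n-1-m}$, where the coefficients converge by $a$-adic completeness. You additionally carry out the coefficientwise verification of this identity, which the paper leaves implicit.
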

\begin{proof}
    Note first that $\phi$ is well-defined by the universal property of the power series ring since $A$ is $a$-adically separated and complete.
    It is surjective since it is an $A$-algebra homomorphism.
    We need to see that claim about the kernel.
    For this, we search for $g(t)\in A[\![t]\!]$ such that the following equation holds
    \begin{equation}
        f-\phi(f)=(t-c)g(t).
    \end{equation}
    Solving this modulo higher powers of $t$, we obtain that we have to take
    $$
    g(t)=\sum_{k\geq 0}b_{k}t^{k},
    $$
    where
    $$
    b_{k}=\sum_{n>k}a_{n}c^{n-(k+1)}\in A
    $$
    is a well-defined expression because again $A$ is $a$-adically complete and separated.
    Now if $f\in \ker(\phi),$ then $\phi(f)=0$ and therefore $f\in (t-c)A[\![t]\!]$, as desired.
\end{proof}
\begin{Lemma}\label{lem: comparing loop groups generic equivalence}
    Consider $L^{a}_{X_{\eta}}\mcG=L^{a}\mcG\times_{\Divtilk,h_{e}}X_{\eta}$ and $L^{a,r}_{X_{\eta}}\mcG=L^{a}_{X_{\eta}}\mcG\times_{X_{\eta},r}X_{\eta}$ with $a\in \lbrace \emptyset, +\rbrace$.
    The following statements hold:
    \begin{enumerate}
        \item[(a)] There exists a canonical isomorphism of positive loop group sheaves on $\Perf_{X_{\eta}}$
        $$
        L^{+,r}_{X_{\eta}}\mcG\simeq L^{+}_{X_{\eta}}\mcG,
        $$
        \item[(b)] there exists a canonical isomorphism of loop group sheaves on $\Perf_{X_{\eta}}$
        $$
        L^{r}_{X_{\eta}}\mcG\simeq L_{X_{\eta}}\mcG,
        $$
    \end{enumerate}
\end{Lemma}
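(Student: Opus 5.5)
The plan is to reduce both statements to Lemma \ref{lem: comparing Bando rings in the generic equivalence}. By definition, for a perfect $X_{\eta}$-algebra $R$ we have $L^{+,r}_{X_{\eta}}\mcG(R)=\mcG(B^{+}_{\xi_{1}(1-x),\xi_{2}(1-x)}(R))$, where $\mcG$-points are taken for the $\mcO[u]$-algebra structure $u\mapsto \xi_{2}(1-x)$. Similarly, $L^{+}_{X_{\eta}}\mcG(R)=\mcG(B^{+}_{\xi_{1}(x),\xi_{2}(x)}(R))$ with $u\mapsto \xi_{2}(x)$. Part (a) of that lemma identifies both rings with $W_{\mcO}(R)$, functorially in $R$. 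The two $\mcO[u]$-structures, however, differ by the unit $\lambda=\nu^{-e}$ of part (b). So the real task is to produce a functorial isomorphism
$$\mcG\otimes_{\mcO[u],u\mapsto \lambda\varpi}W_{\mcO}(R)\simeq \mcG\otimes_{\mcO[u],u\mapsto \varpi}W_{\mcO}(R),$$
where $\varpi=\overline{\xi}_{2}(1-x)$.

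To get this isomorphism I would not twist $u$ directly, since $\mcG$ over $\mcO[u]$ has no rescaling symmetry in general. Instead I would go back to the construction in Theorem \ref{Theorem: PZ}. Over $\breve{\mcO}$, and we are over $k=\overline{k}_{F}$ so $W_{\mcO}(R)$ is a $\breve{\mcO}$-algebra, we may assume $G$ is quasi-split. Then $\mcG$ is the relative identity component of $(\Res^{\mcO_{0}[v]}_{\mcO[u]}\mcH)^{\gamma_{0}}$, where $\mcH=\mcH_{\Omega}$ is the split Pappas--Zhu group over $\mcO_{0}[v]$.

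For any $\mcO[u]$-algebra $A$ that is a $\breve{\mcO}$-algebra, the $A$-points of $\Res(\mcH)^{\gamma_{0}}$ are $\mcH(A\otimes_{\mcO[u]}\mcO_{0}[v])^{\gamma_{0}}$. For $A=B^{+}(R)$, the ring $A\otimes_{\mcO[u]}\mcO_{0}[v]$ is $\widetilde{B^{+}}(R)$ up to the harmless $\mcO_{0}$-factor. Part (c) of Lemma \ref{lem: comparing Bando rings in the generic equivalence} gives a $\gamma_{0}$-equivariant isomorphism $\widetilde{B^{+}}_{\xi(x)}(R)\simeq\widetilde{B^{+}}_{\xi(1-x)}(R)$ over the identification of (a), sending $U\mapsto \nu^{-1}U$.

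It remains to see that $\mcH(\widetilde{B^{+}}(R))$ is compatible with this rescaling $v\mapsto \nu^{-1}v$, where $\nu$ is a unit. I would check this on the schematic root datum from the split case. The torus $T_{H}\otimes\mcO_{0}[v]$ is unchanged. Each root group $v^{\lceil f(a)\rceil}U_{a}$ is carried to itself by rescaling the coordinate $x\mapsto \nu^{\lceil f(a)\rceil}x$, since $\nu$ is a unit. This rescaling is compatible with the commutator relations because they are homogeneous in $v$-degree. Equivalently, and this is how I would phrase it, the automorphism $v\mapsto \nu^{-1}v$ of $W_{\mcO}(R)[v]$ is induced by conjugation by $\check\rho(\nu)$ for a suitable (possibly fractional, so pass to the adjoint group) cocharacter. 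By the uniqueness of birational group laws \cite[1.2.13]{BruhatTitsII}, this gives an isomorphism of the base-changed $\mcH$'s. I must then check that it is $\gamma_{0}$-equivariant. The $\gamma_{0}$-action multiplies $v$ by $\zeta$ and commutes with multiplication by $\nu^{-1}\in W_{\mcO}(R)$, and $\check\rho$ is $\tau(\gamma_{0})$-fixed, so equivariance holds. Taking invariants and relative identity components, which is functorial, yields (a). Functoriality in $R$ follows from the functoriality in Lemma \ref{lem: comparing Bando rings in the generic equivalence}.

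Part (b) then follows by the same argument applied to $B(R)=B^{+}(R)[1/\xi_{2}]$. Here $\underline{G}$ over $\mcO[u^{\pm}]$ is $(\Res\,H)^{\gamma_{0}}$, and the isomorphism of (c) extends after inverting $U$, compatibly with $\gamma_{0}$. Compatibility of (a) and (b) with the inclusion $L^{+}\subset L$ is clear from the construction, and this is what Lemma \ref{lem: key geometric statement about switch Hecke} needs.

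The main obstacle is exactly the twist of the $\mcO[u]$-structure by $\lambda$. It must be shown that the Pappas--Zhu group is insensitive to the unit rescaling $v\mapsto\nu^{-1}v$ on these particular rings, because a general $\mcO[u]$-group scheme has no such symmetry. If the root-datum argument turns out to be awkward, a fallback is available. By Theorem \ref{Theorem: PZ}(b) on geometric points, both sides are Bruhat--Tits group schemes of the same facet over isomorphic DVRs. One could then use \cite[Corollary 2.10.11]{KalethaPrasadBruhatTits} together with the identification of generic fibers $\underline{G}$ via (c) to extend the isomorphism integrally, working first over $W_{\mcO}(R)$ for $R$ a field and then deducing flatness-based extension for general perfect $R$.
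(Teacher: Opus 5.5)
Your proposal is correct and is essentially the paper's proof: after identifying both Bando rings with $W_{\mcO}(R)$ and passing to $\widetilde{B^{+}}$ via Lemma \ref{lem: comparing Bando rings in the generic equivalence}(c), everything reduces to the invariance of the split Pappas--Zhu group $\mcH\subset H\otimes\mcO[v^{\pm}]$ under scaling $v$ by a unit. The paper also deduces this invariance from the uniqueness in \cite[Section 1.2.13, 1.2.14]{BruhatTitsII}, then takes $\gamma_{0}$-invariants and identity components, and gets (b) by inverting $\xi_{2}$.

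One correction: discard the reformulation via conjugation by $\check\rho(\nu)$, because a base-ring automorphism is not inner (it acts nontrivially on $T_{H}(W_{\mcO}(R)[v])$, and $a\mapsto\lceil f(a)\rceil$ is not linear). Deduce the $\gamma_{0}$-equivariance instead from two facts: the isomorphism is the identity on the generic fibre $H\otimes W_{\mcO}(R)[v^{\pm}]$, where $v\mapsto\nu^{-1}v$ commutes with $\tau(\gamma_{0})\otimes\gamma_{0}$, and that fibre is schematically dense.
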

\begin{proof}
    Note that (b) follows from (a) together with Lemma \ref{lem: comparing Bando rings in the generic equivalence} (b).
    For (a), recall that we are assuming that $G$ is quasi-split.
    Therefore, as we have recalled in the proof of Theorem \ref{Theorem: PZ}, the Pappas--Zhu group scheme $\mcG$ is constructed from the split case as follows: let $\mcH$ be the Pappas--Zhu group scheme constructed from the split form $H$ of $G$ over $\widetilde{F}$, then $\mcG$ is the group scheme of the relative (=absolute here) identity component of $\Res^{\breve{\mcO}[v]}_{\breve{\mcO}[u]}(\mcH)^{\gamma_{0}}$.
    Therefore, the result follows from Lemma \ref{lem: comparing Bando rings in the generic equivalence} (c) together with the observation that the split Pappas--Zhu group scheme $\mcH$ over $\mcO[v]$ is equivariant as a subgroup scheme of $H\otimes_{\mbZ}\mcO[v^{\pm}]$ with respect to scaling $v$ by a unit by the uniqueness assertion in \cite[Section 1.2.13,1.2.14]{BruhatTitsII}.
\end{proof}
\begin{proof}[Proof of Lemma \ref{lem: key geometric statement about switch Hecke}]
    Apply the previous Lemma
    \ref{lem: comparing loop groups generic equivalence} to both
    $\mcG$ and $\mcG^\prime$. The resulting isomorphisms identify the loop
    group, the positive loop group action, and hence their quotient Hecke
    stacks.
\end{proof}
\subsection{Comparison of the special fibers by nearby cycles}
\label{subsection:nearby-comparison}
We use Ayoub's theory of motivic nearby cycles in the form studied in
\cite{preisUlaMotives}. We briefly extend it to the bounded quotient
stacks occurring below. On a finite Schubert bound, choose \(N\) such
that the positive loop group action factors through
\(L^{(N)}\mcG^\prime\), and apply \(\Psi\) termwise to finite-type
deperfections of the smooth \v{C}ech nerve of the resulting quotient
stack. Since \(\Psi\) is a specialization system
\cite[Remark 1.4.12(2)]{preisUlaMotives}, smooth base change makes these
functors compatible with the descent datum and therefore defines a functor
on the limit. Universal-homeomorphism invariance, together with
Lemma
\ref{Lemma: Quoient by congruence subgroup base changed from a field}
and homotopy invariance, makes the construction independent of the
deperfections and of \(N\). Proper base change makes it compatible with
enlargement of the Schubert bound.

By \cite[Definition 3.2.2]{preisUlaMotives}, ULA objects are precisely
the dualizable objects in the bicategory of cohomological
correspondences. By
\cite[Remark 3.3.15(2)]{preisUlaMotives}, the K\"unneth formula for
 nearby cycles makes the induced functor on cohomological
correspondences symmetric monoidal. Hence
\cite[Corollary A.8]{preisUlaMotives} shows that nearby cycles preserve
ULA objects. We thus obtain a functor
\[
    \Psi_f:
    \DA\ula(\Hk{S_a,\eta})
    \longrightarrow
    \DA\ula(\Hk{S_a,\sigma}).
\]
Recall that if \(E\in \DA\ula(\Hk{S_a})\) is universally locally acyclic over \(S_a\), then
\begin{equation}\label{eq:ula-nearby-restriction}
    \Psi_f(j_a^*E)\simeq i_a^*E
\end{equation}
by \cite[Proposition 3.3.17]{preisUlaMotives}.

Write
\[
    n_a:S_a\longrightarrow X,
    \qquad
    j_a:S_{a,\eta}\hookrightarrow S_a,
    \qquad
    i_a:\Spec(k)\hookrightarrow S_a
\]
for the canonical morphism from the henselization, inclusions of the generic and special points
respectively.
Let
\[
    h_a:\Hk{S_a}\longrightarrow S_a
\]
be the structure morphism. For a finite union $Y_X\subset\Hk{X}$
of closed Schubert strata, we write $\Psi_{Y,a}$ for the nearby cycles functor
associated with $Y_{S_a}\to S_a$. If $Y_X=\Hk{X}^{\leq w}$, we abbreviate this
to $\Psi_{w,a}$.

For a finite union \(Y_X\subset\Hk{X}\) of closed Schubert strata, write
\[
    Y_{S_a}=Y_X\times_XS_a,
    \qquad
    Y_a=Y_{S_a}\times_{S_a}k.
\]
Thus \(Y_0\) is the equal-characteristic special fiber and \(Y_1\) is the
Witt-vector special fiber. The isomorphism
\eqref{eq:generic-stack-isomorphism} respects the Schubert stratifications.
Let \(\rho_{\eta,Y}:Y_{S_0,\eta}\isomto Y_{S_1,\eta}\) be its
restriction. Then
\[
    (\rho_{\eta,Y}^{-1})^*:
    \DA(Y_{S_0,\eta})\isomto\DA(Y_{S_1,\eta}).
\]
The statement of the following proposition needs the next construction:
As we will see in the proof below, by homotopy-invariance of étale motives and Lemma \ref{Lemma: Quoient by congruence subgroup base changed from a field}, we have an equivalence
$$
\DA(\Hk{X}^{w})\simeq \DA([X/(H_{w}\times_{k}X)]),
$$
where $H_{w}=\overline{\mcG}_{w}$ in the notation of Lemma \ref{Lemma: Quoient by congruence subgroup base changed from a field} and $\mcG_{w}=\mcG_{\Omega_{w}}$ is the Pappas--Zhu group scheme associated to $\Omega_w=\overline{\mcF^\prime\cup w\mcF}$.
We therefore get via pull-back a functor
$$
\DA([\Spec(k)/(H_{w}\times_{k}\Spec(k))])\rightarrow \DA([X/(H_{w}\times_{k}X)])
$$
and therefore a functor
$$
\DA\cons(\Hk{k}^{\mathrm{eq},w})\rightarrow \DA{\ula}(\Hk{X}^{w}),
$$
which we denote by $Q\mapsto \underline{Q}_{X}$ (the 'constant extension to $X$').
\begin{proposition}\label{proposition:comparison-single-stratum}
For every
\(w\in\widetilde{W}_{\mcF^{\prime}}\backslash
\widetilde{W}/\widetilde{W}_{\mcF}\), there is an equivalence
\[
    \theta_w:
    \DA\cons(\Hk{k}^{\mathrm{eq},w})
    \isomto
    \DA\cons(\Hk{k}^{\mathrm{Witt},w}).
\]
It is defined by the following formula: for \(Q\in\DA_c(\Hk{k}^{\mathrm{eq},w}) \),
\begin{equation}\label{eq:stratum-specialization}
    \theta_w(Q)
    \simeq
    \Psi_{w,1}\bigl(
        (\rho_{\eta}^{-1})^{*}j_{0}^{*}n_{0}^{*}\underline{Q}_{X}
    \bigr).
\end{equation}
\end{proposition}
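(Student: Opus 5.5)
The plan is to reduce everything on a single stratum to motives on classifying stacks of a constant group. By Lemma \ref{lemma: structure of Orbits in Bando affine Grassmannian}(a),(b), $\Hk{}^{w}\simeq[\Divtilk/L^{+}\mcG_{\Omega_w}]$. Lemma \ref{Lemma: Quoient by congruence subgroup base changed from a field} says the kernel of $L^{+}\mcG_{\Omega_w}\to\overline{\mcG}_{w}\times\Divtilk$ is successively filtered by vector groups. Homotopy invariance then gives, for any perfect $\Divtilk$-scheme $T$,
\[
\DA(\Hk{T}^{w})\simeq\DA([T/(H_w\times_k T)]),
\]
naturally in $T$. The key point is that $H_w=\overline{\mcG}_{w}$ does not depend on the point of $\Divtilk$: it is the fiber of the Pappas--Zhu group scheme at $(\pi,u)=(0,0)$. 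This trivializes $\Hk{}^{w}$ motivically along the whole path $X$. On such a quotient, ULA objects over $T$ are the objects whose pullback to $T$ is dualizable (Lemma \ref{lemma:stalkwise}). Hence $\DA\ula(\Hk{T}^{w})\simeq\DA\ula([T/H_w\times T]/T)$. I would first show that the constant extension $Q\mapsto\underline{Q}_X$ lands in ULA objects, using smooth pullback from $\Spec k$.

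Next I would compute the right-hand side of \eqref{eq:stratum-specialization}. By construction, $n_0^*\underline{Q}_X=\underline{Q}_{S_0}$ is ULA over $S_0$, and $j_0^*$ gives its generic fiber. The isomorphism $\rho_\eta$ comes from identifications of positive loop groups (Lemma \ref{lem: comparing loop groups generic equivalence}). I must check that, after passing to the reductive-quotient presentation, $\rho_\eta$ is compatible with the identification of both sides with $[S_{\bullet,\eta}/H_w]$. In other words, $\rho_\eta$ should induce on $\overline{\mcG}_w$ an automorphism that is trivial or at least constant. The isomorphism of Lemma \ref{lem: comparing Bando rings in the generic equivalence}(c) rescales $U$ by the unit $\nu^{-1}$. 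Modulo the congruence subgroup this acts on $H_w$ through the torus $\mcT$, so it is inner up to a constant. Since the action is by a connected group, it acts trivially on motives. I therefore expect
\[
(\rho_\eta^{-1})^*j_0^*n_0^*\underline{Q}_X\simeq j_1^*\underline{Q}_{S_1}.
\]
Here $\underline{Q}_{S_1}$ is the constant extension over $S_1$, which is ULA. Then \eqref{eq:ula-nearby-restriction} gives $\Psi_{w,1}(j_1^*\underline{Q}_{S_1})\simeq i_1^*\underline{Q}_{S_1}$. This is $Q$ viewed on $[\Spec k/H_w]\simeq\Hk{k}^{\mathrm{Witt},w}$.

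So $\theta_w$ becomes the composite of the two identifications
\[
\DA\cons(\Hk{k}^{\mathrm{eq},w})\simeq\DA\cons([\Spec k/H_w])\simeq\DA\cons(\Hk{k}^{\mathrm{Witt},w}).
\]
This composite is visibly an equivalence. Its inverse is given symmetrically by $\Psi_{w,0}$ applied after $\rho_\eta^*$. Constructibility is preserved because ULA objects over $k$ are exactly the constructible ones, by Preis.

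The main obstacle is the compatibility step: showing that $\rho_\eta$, restricted to the stratum, matches the two trivializations $\Hk{S_{a,\eta}}^{w}\simeq[S_{a,\eta}/H_w]$ up to something invisible to $\DA$. I would first try to show directly that the loop-group isomorphism of Lemma \ref{lem: comparing loop groups generic equivalence} carries $\dot w$ to $\dot w$ times an element of $\mcT(B^+)$. This should hold because $\dot w$ is defined over $\breve{\mcO}(\!(u)\!)$ and the rescaling by $\nu$ normalizes $\underline{T}$. From this, the induced map on the reductive quotient $H_w$ would be conjugation by an $S_\eta$-point of a connected torus. If that fails, the fallback is to use fully faithfulness of pullback from $[\Spec k/H_w]$ to connected $S_{a,\eta}$. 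Any auto-equivalence coming from an isomorphism of $S$-group schemes that is trivial at one geometric point must then fix constant objects.
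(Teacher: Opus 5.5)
There is a genuine gap, and it sits exactly at the step you flag as the main obstacle. Your reduction of each stratum to $[T/(H_w\times_k T)]$ and your identification of the crux both agree with the paper. What fails is the claim that the rescaling $U\mapsto\nu^{-1}U$ acts on $H_w$ ``through the torus $\mcT$, hence innerly''. $H_w=\overline{\mcG}_{\Omega_w}$ is the full special fibre of the stabilizer, not its reductive quotient. The rescaling acts on it through the scaling-equivariance of $\mcH_{\Omega_w}\subset H\otimes\mcO[v^{\pm}]$: the coordinate on the root subgroup $v^{\lceil f_{\Omega_w}(a)\rceil}U_a$ is multiplied by $\bar\nu^{\pm\lceil f_{\Omega_w}(a)\rceil}$, and for $e>1$ the $U$-adic layers are rescaled as well. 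Since $a\mapsto\lceil f_{\Omega_w}(a)\rceil$ is not linear, this is in general not conjugation by any section of $\mcT$ or of $H_w$.

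A concrete case is split $\GL_2$ with $\mcF=\mcF'$ hyperspecial and $w$ a minuscule translation. There $H_w\simeq T\ltimes\mathbb{G}_a^2$, and the torus shift of $\dot w$ you mention is absorbed, since both lifts define the same point of the Grassmannian. The map $\rho_\eta$ rescales one $\mathbb{G}_a$-coordinate by $\bar\lambda^{\pm1}$ while fixing $T$ and the other coordinate, and no inner automorphism does this. So ``a connected group acts trivially'' does not apply here. That principle is valid only for conjugation by sections of $H_w$, which is $2$-isomorphic to the identity of the classifying stack.

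Your fallback fails as well. $S_{a,\eta}=\Spec E$ with $E$ the perfected fraction field of a strictly henselian trait, and already $\DA(k)\to\DA(E)$ is not fully faithful: with $\mathbb{Z}/\ell$-coefficients, $H^1(E,\mu_\ell)=E^\times/E^{\times\ell}\neq0$. Moreover $\bar\nu$ has valuation $\pm1$ at the closed point. Hence the twist is trivial at no geometric point of $S_{a,\eta}$ and does not extend over $S_a$. This non-contractibility of the generic fibre is precisely why the comparison has to be routed through $\Psi$.

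The missing idea is a second homotopy-invariance reduction that removes the non-inner part of the twist before comparing. The paper passes to the quotient by $K^{\gamma_0,\circ}=\ker\bigl(\mcH(R[U]/(U^e))^{\gamma_0,\circ}\to\mcH(R)\bigr)$. This subgroup is smooth, connected and split unipotent by Edixhoven's theorem, since $p\nmid e$, and the paper argues that the rescaling of $U$ becomes invisible on the quotient. Note, however, that in the $\GL_2$ example $e=1$, so this subgroup is trivial there. To make the step airtight you should quotient by the whole unipotent radical of $H_w$.

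On the reductive quotient, the residual twist is conjugation by a point of the adjoint torus, namely a rational cocharacter evaluated at $\bar\nu$. Your torus picture becomes correct there, except that this point need not lift. For $\mathrm{SL}_2$ with $\mcF=\mcF'$ the hyperspecial vertex other than $x_0$ and $w=1$, the twist is conjugation by $\mathrm{diag}(\bar\lambda^{\pm1},1)$. This comes from $\mathrm{SL}_2(E)$ only if $\bar\lambda$ is a square, and it is not, since it has odd valuation. That residual piece still needs its own argument. One route is to pass to a tame Kummer cover of $S_1$, over which the twist becomes inner, and use that Ayoub's nearby cycles are insensitive to such covers.
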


\begin{proof}
Let
\[
	\Omega_w=\overline{\mcF^\prime\cup w\mcF},
    \qquad
    P_{w,T}=\Stab_{L^+\mcG^\prime_T}(w).
\]
By Lemma \ref{lemma: structure of Orbits in Bando affine Grassmannian},
we have
\begin{equation}\label{eq:stratum-classifying-stack}
    \Hk{T}^w\simeq[T/P_{w,T}],
    \qquad
    P_{w,T}\simeq L^+\mcG_{\Omega_w,T}.
\end{equation}
Let $H_w=\overline{\mcG}_{\Omega_w}$ be as in Lemma \ref{Lemma: Quoient by congruence subgroup base changed from a field}. The quotient
$P_{w,T}/P_{w,T}^{\geq1}$ is isomorphic to $H_w\times_k T$, and the kernel is pro-unipotent. Thus,
homotopy invariance identifies the category of motives on
\(\Hk{T}^w\) with the category of motives on the
constant classifying stack $[T/(H_w\times_k T)]$; see
\cite[Proposition 2.2.1]{RicharzScholbachIntersectionMotives}.

Now consider $L^{+}_{X}\mcG_{\Omega_w}$, where we are considering the map $h\colon X\rightarrow  \Divtilk$ from the beginning of the previous subsection.
Similarly, consider again $L^{+,r}_{X}\mcG_{\Omega_w}=L^{+}_{X}\mcG_{\Omega_w}\times_{X,r}X$.
Let $R\in \Perf_{k}$, $\mcH$ be the Pappas--Zhu group scheme for the split form $H$ of $G$.
Then the $R$-valued points of the quotient of $L^{+}_{X}\mcG_{\Omega_w}$ resp. $L^{+,r}_{X}\mcG_{\Omega_w}$ by the first congruence subgroup are both given by the same formula:
$$
(\mcH(R[U]/(U^{e})))^{\gamma_{0},\circ}.
$$
However, the isomorphism in Lemma \ref{lem: comparing loop groups generic equivalence} over the generic fiber $X_{\eta}$ is given by multiplying $U$ by the scalar $\nu^{-1}$.
Therefore, taking the identity between the two quotients by the first congruence subgroups would not be compatible with the generic equivalence and the formula (\ref{eq:stratum-specialization}).
However, to show that the formula (\ref{eq:stratum-specialization}) does indeed give an equivalence as claimed we may argue as follows: 
Consider the group scheme whose $R$-valued points for an $k$-algebra $R$ are given by 
$$
K(R)=\Ker(\mcH(R[U]/(U^{e}))\rightarrow \mcH(R)).
$$
Note that
$$
\Ker((\mcH(R[U]/(U^{e})))^{\gamma_{0},\circ}\twoheadrightarrow \mcH(R))\subseteq K(R)
$$
is simply the intersection of $K(R)$ with $(\mcH(R[U]/(U^{e})))^{\gamma_{0},\circ}$ and therefore given by
$
K(R)^{\gamma_{0},\circ}.
$
Lemma \ref{Lemma: Quoient by congruence subgroup base changed from a field} has an obvious analogue for $\Ker(\mcH(R[U]/(U^{e}))\rightarrow \mcH(R))$ and we obtain that $K$ defines a normal, smooth, connected and split unipotent subgroup of the group scheme defined by $R\mapsto \mcH(R[U]/(U^{e}))$. 
Now, since $\gamma_{0}$ is of order prime to $p$, we see that $K(R)^{\gamma_{0}}$ is still smooth (\cite[3.4]{EdixhovenTame}) and since a subgroup of a unipotent group over a perfect field is still unipotent, we deduce that $
K(R)^{\gamma_{0},\circ}
$ defines likewise a normal, smooth, connected and split unipotent group.
Therefore, we may use homotopy invariance of étale motives and the fact that twist from the generic equivalence disappears once we mod out $U$, to see that the formula (\ref{eq:stratum-specialization}) does indeed define an equivalence, as desired.
\end{proof}
We finally come to our main result.
\begin{Theorem}\label{Theorem: Main}
There is an equivalence
\[
    \mcB_{\mcG',\mcG}:
    \DA\cons(\Hk{k}^{\mathrm{eq}})
    \isomto
    \DA\cons(\Hk{k}^{\mathrm{Witt}}).
\]
For \(M\in\DA\ula(\Hk{X})\),
\begin{equation}\label{eq:reflection-specialization}
    \mcB_{\mcG',\mcG}(i_0^*n_0^*M)
    \simeq
    \Psi_{h_1}\bigl(
        (\rho_\eta^{-1})^*j_0^*n_0^*M
    \bigr).
\end{equation}
If \(\mcG^\prime=\mcG\), the equivalence is
monoidal for convolution.
\end{Theorem}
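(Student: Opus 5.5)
We construct $\mcB_{\mcG',\mcG}$ bound by bound and then pass to the colimit \eqref{eq:filteredcolim}. Fix a finite union $Y_X\subset\Hk{X}$ of closed Schubert strata. We define
\[
\mcB_Y:=\Psi_{Y,1}\circ(\rho_{\eta,Y}^{-1})^*\circ\Psi_{Y,0}^{-1}
\]
once we know that the specialization $\Psi_{Y,0}\colon\DA\ula(Y_{S_0,\eta})\to\DA\cons(Y_0)$ is an equivalence. Rather than prove this directly, we first construct the functor in the form \eqref{eq:reflection-specialization}. The key claim is that restriction along $i_0^*n_0^*$ gives an equivalence
\[
\DA\ula(Y_X)\to\DA\cons(Y_0),
\]
at least after the strictly henselian base change $\DA\ula(Y_{S_0})\simeq\DA\cons(Y_0)$.

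\textbf{Step 1.} We show that $i_0^*\colon\DA\ula(Y_{S_0})\to\DA\cons(Y_0)$ is an equivalence by induction on the number of strata, using the recollement from Corollary \ref{corollary:closed}. For a single stratum, Lemma \ref{lemma: structure of Orbits in Bando affine Grassmannian} and homotopy invariance reduce the claim to $[S_0/(H_w\times S_0)]$. In that case ULA objects are, via Lemma \ref{lemma:stalkwise}, the pullbacks of dualizable objects on $S_0$. Dualizable objects on the strictly henselian, perfectly smooth $S_0$ are pulled back from the closed point, because $i_0^*$ is an equivalence on $\DA\rig(S_0)$ (rigidity/henselian continuity for étale motives with $p$ inverted). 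The constant extension $Q\mapsto\underline{Q}_X$ provides the inverse.

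For the gluing step, we write an object as an extension of $j_!j^*$ and $i_*i^*$ pieces and match the gluing functors ($i^*j_*$) on both sides. This uses that these operations preserve ULA objects and commute with $i_0^*$, again by Corollary \ref{corollary:closed}. Full faithfulness then follows from the stratum case together with the fact that the relevant $\Hom$ motives in $\DA(S_0)$ are dualizable, hence determined at the special point.

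The same argument applies at $S_1$. Combining it with \eqref{eq:ula-nearby-restriction} shows that $\Psi_{Y,a}\circ j_a^*$ agrees with $i_a^*$ on ULA objects. We then set $\mcB_Y:=i_1^*\circ\widetilde{\rho}\circ(i_0^*)^{-1}$, where $\widetilde\rho$ is $(\rho_\eta^{-1})^*$ transported along the equivalences $j_a^*$. Formula \eqref{eq:reflection-specialization} is immediate from this definition.

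\textbf{Step 2.} We show that $\mcB_Y$ is an equivalence. The main obstacle is to prove that $\Psi_{Y,1}\circ(\rho_{\eta}^{-1})^*$ takes the essential image of $j_0^*$ onto ULA objects that extend over $S_1$, and does so invertibly. We would argue by dévissage on strata. On a single stratum the composite is $\theta_w$, which is an equivalence by Proposition \ref{proposition:comparison-single-stratum}. Nearby cycles commute with $j_!,j_*$ on ULA objects by the Künneth/specialization-system compatibilities, and $(\rho_\eta^{-1})^*$ respects the stratifications. Since an exact functor between recollements that is an equivalence on strata and compatible with the gluing data is an equivalence, $\mcB_Y$ is one. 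A symmetric construction using $r$ gives the inverse.

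\textbf{Step 3.} Compatibility with enlarging $Y$ follows from proper base change for $\Psi$. Passing to the colimit yields $\mcB_{\mcG',\mcG}$. For monoidality when $\mcG'=\mcG$, we combine three facts. The functors $i_a^*$ are monoidal by Lemma \ref{lemma:convolutionbasechange}. The functor $(\rho_\eta^{-1})^*$ is monoidal by Proposition \ref{prop: generic equivalence}. On ULA objects, $\Psi$ is monoidal, as it commutes with $m_!$ (proper) and $p^*$ (smooth) and satisfies the Künneth formula. For the $\infty$-categorical coherence, we note that all three functors arise from symmetric monoidal functors on the categories of cohomological correspondences. Convolution is built from these correspondences as in \cite[Section 2.3]{Chiro}, so the composite inherits a monoidal structure.
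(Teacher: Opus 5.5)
There is a genuine gap at the start of Step~1. You claim that $i_0^*\colon\DA\rig(S_0)\to\DA\rig(k)$ is an equivalence, but this is false for \'etale motives with coefficients flat over $\mathbb{Z}$. Suslin--Gabber rigidity concerns torsion coefficients; rationally, the motivic cohomology of a strictly henselian local ring is not that of its residue field. For instance, with $\Lambda=\mathbb{Z}[\tfrac1p]$ one has $\Hom_{\DA(S_0)}(\Lambda,\Lambda(1)[1])\simeq\mcO(S_0)^{\times}[\tfrac1p]$, whereas over $k$ this group is $k^{\times}[\tfrac1p]$. The reduction map kills $(1+\mathfrak m_{S_0})[\tfrac1p]$, which is a nonzero $\mathbb{Q}$-vector space containing $1+x$.

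So $i_0^*$ is not even faithful on maps between Tate motives. Hence $i_0^*\colon\DA\ula(Y_{S_0})\to\DA\cons(Y_0)$ is not an equivalence, and your full-faithfulness argument (``Hom motives are dualizable, hence determined at the special point'') fails for the same reason. The same computation shows that $j_a^*$ is not full on ULA objects over $S_a$ (compare $\mcO(S_a)^{\times}[\tfrac1p]\subsetneq\operatorname{Frac}(\mcO(S_a))^{\times}[\tfrac1p]$). It is also not essentially surjective onto $\DA\ula(Y_{S_a,\eta})$, since motives with nontrivial monodromy do not extend. So the ``equivalences $j_a^*$'' you use to define $\widetilde\rho$ are not available.

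Consequently $\mcB_Y=i_1^*\circ\widetilde\rho\circ(i_0^*)^{-1}$ is undefined, and Steps~2 and~3, which analyse this composite and give it a monoidal structure, have nothing to act on. What you are importing is exactly the \'etale contractibility of strictly henselian spectra from Bando's argument. The paper points out that this does not survive the passage to motives.

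The paper never inverts a restriction functor. It uses the d\'evissage principle from your Step~2 to \emph{define} $\Theta_Y\colon\DA\cons(Y_0)\isomto\DA\cons(Y_1)$ on recollement triples $(A,B,\delta)$, by induction on the number of strata.

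On a single stratum, $\theta_w$ makes sense because $\Hk{X}^{w}$ is, up to homotopy invariance, the constant stack $[X/(H_w\times X)]$. So every object has a constant lift, and both special fibres are identified with motives on $[\Spec k/H_w]$.

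For the gluing step one only needs $\lambda_Y\colon\Theta_Z K_0\simeq K_1\theta_w$. This is obtained by applying the inductively constructed $\epsilon_{Z,-}$ to the ULA object $G_{Z,X}(A)=z_X^*u_{X,*}A_X$. One then uses only the one-directional identity $\Psi_{h_1}(j_1^*E)\simeq i_1^*E$ for the ULA object $G_{Z,1}(A)$, built from the constant lift of $\theta_w(A)$ to $S_1$. The comparison $\epsilon_{Y,M}$ comes from applying $\Psi$ to the localization triangle. At no point is it needed that ULA objects over $S_a$ are determined by their special fibre.

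Monoidality is likewise not inherited from a composite. The structure maps are first defined on extensions by zero of constant ULA lifts, by transporting the monoidality of $M\mapsto\Psi_{h_1}((\rho_\eta^{-1})^*j_0^*n_0^*M)$ through $\epsilon$. They are then extended to all constructible inputs through the pullback squares of Lemma~\ref{lem:multilinear-recollement}.
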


\begin{proof}
Let $Y_X\subset\Hk{X}$ be a finite closed union of Schubert strata. We
construct by induction on the number of strata an equivalence
\[
    \Theta_Y:\DA\cons(Y_0)\isomto\DA\cons(Y_1)
\]
together with, for every $M\in\DA\ula(Y_X)$, an equivalence
\begin{equation}\label{eq:bounded-specialization-compatibility}
    \epsilon_{Y,M}:
    \Theta_Y(i_0^*n_0^*M)
    \isomto
    \Psi_{Y,1}\bigl(
        (\rho_{\eta,Y}^{-1})^*j_0^*n_0^*M
    \bigr).
\end{equation}
If $Y_X$ consists of one stratum, both assertions follow from Proposition
\ref{proposition:comparison-single-stratum}.

Suppose that $Y_X$ contains more than one stratum. Let
$u_X:U_X\hookrightarrow Y_X$ be an open stratum, indexed by $w$, and let
$z_X:Z_X\hookrightarrow Y_X$ be its closed complement. We denote the
corresponding immersions on $Y_a$ by $u_a$ and $z_a$. Let
\[
    K_a=z_a^*u_{a,*}:
    \DA\cons(U_a)\longrightarrow\DA\cons(Z_a),
    \qquad a=0,1.
\]
By induction, $\Theta_Z$ and $\epsilon_{Z,-}$ have already been
constructed. We compare the gluing functors $K_0$ and $K_1$.

Let $A\in\DA\cons(U_0)$. Under the constant classifying stack description
in the proof of Proposition \ref{proposition:comparison-single-stratum},
let $A_X$ be its pullback along $X\to\Spec(k)$. Set
\[
    G_{Z,X}(A)=z_X^*u_{X,*}A_X.
\]
Note that over $k$, constructible and ULA motives agree by
\cite[Lemma 3.3.12 and Remark 3.3.13]{preisUlaMotives}. In particular, Corollary
\ref{corollary:closed} shows that \(G_{Z,X}(A)\) is ULA over $X$ and
gives
\[
    i_0^*n_0^*G_{Z,X}(A)\simeq K_0(A).
\]
Let $A_{S_1}$ be the pullback of $\theta_w(A)$ along
$S_1\to\Spec(k)$. The definition of $\theta_w$ gives
\[
    i_1^*A_{S_1}\simeq\theta_w(A),
    \qquad
    j_1^*A_{S_1}\simeq
    (\rho_{\eta,U}^{-1})^*j_0^*n_0^*A_X.
\]
Set
\[
    G_{Z,1}(A)=z_{S_1}^*u_{S_1,*}A_{S_1}.
\]
Again by Corollary \ref{corollary:closed}, this object is ULA over $S_1$ and
\[
    i_1^*G_{Z,1}(A)\simeq K_1\theta_w(A).
\]
Since $\rho_\eta$ respects the open and closed strata, base change gives
\[
    j_1^*G_{Z,1}(A)\simeq
    (\rho_{\eta,Z}^{-1})^*j_0^*n_0^*G_{Z,X}(A).
\]
Applying $\epsilon_{Z,-}$ to $G_{Z,X}(A)$ and using
\eqref{eq:ula-nearby-restriction} for $G_{Z,1}(A)$, we obtain a functorial
equivalence
\begin{equation}\label{eq:inductive-gluing-comparison}
\begin{split}
    \lambda_{Y,A}:\Theta_ZK_0(A)
    &\simeq
    \Psi_{Z,1}\bigl(
        (\rho_{\eta,Z}^{-1})^*j_0^*n_0^*G_{Z,X}(A)
    \bigr)\\
    &\simeq K_1\theta_w(A).
\end{split}
\end{equation}
Thus, \eqref{eq:inductive-gluing-comparison} gives an equivalence of functors
\[
    \lambda_Y:\Theta_Z\circ K_0\isomto K_1\circ\theta_w.
\]

Recollement describes an object of $\DA\cons(Y_0)$ by a triple
\[
    (A,B,\delta),
    \qquad
    \delta:B\longrightarrow K_0(A),
\]
see \cite[Remark 3.3.15(5)]{preisUlaMotives}.
Define
\begin{equation}\label{eq:inductive-definition-special-fiber-comparison}
    \Theta_Y(A,B,\delta)
    =
    \bigl(
        \theta_w(A),\,
        \Theta_Z(B),\,
        \lambda_{Y,A}\circ\Theta_Z(\delta)
    \bigr).
\end{equation}
Since $\theta_w$ and $\Theta_Z$ are equivalences and $\lambda_Y$
identifies the two gluing functors, $\Theta_Y$ is an equivalence.

To construct $\epsilon_{Y,M}$, apply $\Psi_{Y,1}$ to the image of
the generic localization triangle under $(\rho_{\eta,Y}^{-1})^*$. Smooth base
change on the open stratum and proper base change on the closed complement
identify its restrictions with the nearby cycles terms for $U_X$ and
$Z_X$, respectively. Proposition
\ref{proposition:comparison-single-stratum} identifies the first term, and
the induction hypothesis identifies the second. The equivalence $\lambda_Y$
identifies the gluing morphisms. Hence the resulting recollement triple is
$\Theta_Y(i_0^*n_0^*M)$, which gives the desired equivalence
$\epsilon_{Y,M}$.

Passing to the filtered
colimit yields an equivalence
\begin{equation}\label{eq:ula-main-equivalence}
    \mcB_{\mcG',\mcG}:
    \DA\cons(\Hk{k}^{\mathrm{eq}})
    \isomto
    \DA\cons(\Hk{k}^{\mathrm{Witt}}).
\end{equation}
Equation \eqref{eq:bounded-specialization-compatibility} shows
\eqref{eq:reflection-specialization}.

It remains to prove the compatibility with convolution, which is treated in Subsection
\ref{subsection:monoidality}.
\end{proof}

\subsection{Monoidality of the equivalence}\label{ss: monoidality}
\label{subsection:monoidality}

We retain the notation of Subsection
\ref{subsection:nearby-comparison}. Assume that
$\mcG^\prime=\mcG$, and write
\[
  \mcB=\mcB_{\mcG,\mcG}:
  \DA\cons(\Hk{}^{\mathrm{eq}})
  \xrightarrow{\sim}
  \DA\cons(\Hk{}^{\mathrm{Witt}}).
\]
Let
\[
  \mathcal E^\otimes
  =
  \operatorname{Fun}\!\left(
    \DA\cons(\Hk{}^{\mathrm{eq}}),
    \DA\cons(\Hk{}^{\mathrm{Witt}})
  \right)^\otimes
  \longrightarrow
  \operatorname{Assoc}^\otimes
\]
be Lurie's functor $\infty$-operad associated with the two
convolution-monoidal categories
\cite[Construction~2.2.6.7 and Example~2.2.6.10]{lurieHA}.
By \cite[Definition~2.1.3.1 and Example~2.2.6.10]{lurieHA}, a lax
convolution-monoidal structure on $\mcB$ is equivalently a section
\[
  s:\operatorname{Assoc}^{\otimes}
  \longrightarrow
  \mathcal E^{\otimes}
\]
of the projection
$\mathcal E^{\otimes}\to\operatorname{Assoc}^{\otimes}$ whose value at
$\langle 1\rangle$ is $\mcB$. Since $s$ is required to be a map of
$\infty$-operads, its value at $\langle r\rangle$ is canonically
equivalent to $\mcB\times\ldots\times\mcB$, and it carries inert
morphisms to inert morphisms. These data are determined, up to a
contractible choice, by its value $\mcB$ at
$\langle1\rangle$. It therefore remains to construct its values on
active morphisms and to verify the compatibilities with composition
and the unit. We will then show that all the resulting structure
morphisms are equivalences, which by
\cite[Definition~2.1.3.7]{lurieHA}, means that the resulting lax
monoidal structure is in fact monoidal.

Fix an active morphism
\[
  \alpha:\langle r\rangle\longrightarrow\langle1\rangle
\]
in $\operatorname{Assoc}^{\otimes}$, and let
$\star_{0,\alpha}$ and $\star_{1,\alpha}$ denote the corresponding
$r$-fold convolution functors. Put
\[
  F_\alpha
  =
  \star_{1,\alpha}
  \circ
  (\mcB\times\cdots\times\mcB),
  \qquad
  G_\alpha
  =
  \mcB\circ\star_{0,\alpha}.
\]
By \cite[Corollary~2.2.6.12]{lurieHA}, the space of operations in
$\mathcal E^\otimes$ lying over $\alpha$, from
$\mcB\times\cdots\times\mcB$ to $\mcB$, is
\[
  \operatorname{Map}_{\operatorname{Fun}
    (\DA\cons(\Hk{}^{\mathrm{eq}})^r,
     \DA\cons(\Hk{}^{\mathrm{Witt}}))}
  (F_\alpha,G_\alpha).
\]

We now construct natural transformations
\begin{equation}
  \mu^{\alpha}:F_\alpha\longrightarrow G_\alpha
  \label{eq:monoidal-operations}
\end{equation}
compatible with composition and the unit.

As in the proof of Theorem \ref{Theorem: Main}, the construction
follows from a recollement argument, recorded in the lemma below.

\begin{Lemma}
\label{lem:multilinear-recollement}
Let $Y_X\subset\Hk{X}$ be a finite closed union of Schubert strata,
and write
\[
  Y_X=U_X\sqcup Z_X
\]
with $U_X$ open. Denote the corresponding decomposition of its
equal-characteristic special fibre by
\[
  Y_0=U_0\sqcup Z_0
\]
and consider the recollement
\[
  \DA\cons(Z_0)
  \mathop{\longrightarrow}^{z_{0,*}}
  \DA\cons(Y_0)
  \mathop{\longrightarrow}^{u_0^*}
  \DA\cons(U_0).
\]
Put
\[
  K_0=z_0^*u_{0,*}:
  \DA\cons(U_0)\longrightarrow
  \DA\cons(Z_0),
\]
and denote by
\begin{equation}
  \partial_{Y,0}:z_{0,*}K_0\longrightarrow u_{0,!}[1]
  \label{eq:universal-boundary}
\end{equation}
the boundary morphism in the localization triangle.

Let $\mathcal A$ be a stable $\infty$-category, and let
\[
  F,G:
  \DA\cons(Y_0)\times\mathcal A
  \longrightarrow
  \DA\cons(\Hk{}^{\mathrm{Witt}})
\]
be functors that are exact in each variable. For $H\in\{F,G\}$, put
\[
  H_U=H(u_{0,!}\times\id),
  \qquad
  H_Z=H(z_{0,*}\times\id),
\]
and let
\[
  \lambda_H:
  H_Z(K_0\times\id)
  \longrightarrow
  H_U[1]
\]
be the natural transformation induced by
$\partial_{Y,0}:z_{0,*}K_0\to u_{0,!}[1]$.
Then restriction to the open and closed parts exhibits the following
square as a pullback:
\begin{equation}
\begin{tikzcd}[column sep=large,row sep=large]
  \operatorname{Map}(F,G)
    \arrow[r]
    \arrow[d]
  &
  \operatorname{Map}(F_U,G_U)
    \arrow[d]
  \\
  \operatorname{Map}(F_Z,G_Z)
    \arrow[r]
  &
  \operatorname{Map}\bigl(
    F_Z(K_0\times\id),G_U[1]
  \bigr).
\end{tikzcd}
\label{eq:recollement-mapping-space}
\end{equation}
The right vertical map sends a point $\eta_U$ to
\[
  \eta_U[1]\circ\lambda_F,
\]
while the lower horizontal map sends $\eta_Z$ to
\[
  \lambda_G\circ
  \eta_Z(K_0\times\id).
\]
The analogous assertion holds after recollement in any finite
collection of source variables. These pullback squares are further
compatible with composition of functors exact in each variable.
\end{Lemma}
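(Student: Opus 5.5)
We reduce the statement to a formal fact about stable recollements: the category $\DA\cons(Y_0)$ is a lax limit (an oriented fibre product) of $\DA\cons(U_0)$ and $\DA\cons(Z_0)$ along the gluing functor $K_0$. Every object $M$ sits in the localization triangle
\[
  z_{0,*}z_0^*M\longrightarrow u_{0,!}u_0^*M[1]\quad\text{(rotated form of } u_{0,!}u_0^*M\to M\to z_{0,*}z_0^*M),
\]
so exact functors out of $\DA\cons(Y_0)$ are determined by their restrictions along $u_{0,!}$ and $z_{0,*}$ together with the boundary datum. We derive the pullback square by writing $\DA\cons(Y_0)\times\mathcal A$ as the appropriate colimit of functor categories and then applying $\operatorname{Map}(-,G)$.

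\textbf{Step 1.} We identify $\DA\cons(Y_0)$ with the cofibre of the boundary map in a suitable sense. Let $\mathcal R$ be the $\infty$-category of triples $(A,B,\delta\colon B\to K_0A)$, which is the recollement description already used in the proof of Theorem \ref{Theorem: Main}. By \cite[A.8]{lurieHA}-type results, $\DA\cons(Y_0)\simeq\mathcal R$. For exact functors, $\operatorname{Fun}^{\mathrm{ex}}(\mathcal R,\mathcal D)$ is then the pullback of $\operatorname{Fun}^{\mathrm{ex}}(\DA\cons(U_0),\mathcal D)$ and $\operatorname{Fun}^{\mathrm{ex}}(\DA\cons(Z_0),\mathcal D)$ over $\operatorname{Fun}^{\mathrm{ex}}(\DA\cons(U_0),\mathcal D)^{\Delta^1}$ restricted along $K_0$. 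Concretely, an exact $H$ corresponds to the triple $(H_U,H_Z,\lambda_H)$ with $\lambda_H\colon H_ZK_0\to H_U[1]$. The point is that the semi-orthogonal decomposition given by $u_{0,!}$ and $z_{0,*}$ makes $\DA\cons(Y_0)$ the pushout, in stable categories, of the "gluing bimodule" $\operatorname{Map}(z_{0,*}-,u_{0,!}-[1])\simeq\operatorname{Map}(-,K_0-)$. This is the standard universal property of the lax limit (cf. \cite[Section A.8]{lurieHA}).

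\textbf{Step 2.} We add the extra variable $\mathcal A$. Since $F$ and $G$ are exact separately in each variable, we pass through $\operatorname{Fun}^{\mathrm{ex}}(\DA\cons(Y_0),\operatorname{Fun}^{\mathrm{ex}}(\mathcal A,\DA\cons(\Hk{}^{\mathrm{Witt}})))$ and apply Step 1 with target $\mathcal D=\operatorname{Fun}^{\mathrm{ex}}(\mathcal A,\dots)$. Mapping spaces in a pullback of $\infty$-categories of the form $\mathcal C_1\times_{\mathcal C_3^{\Delta^1}}\mathcal C_2$ are computed as follows: a map of triples is a pair $(\eta_U,\eta_Z)$ together with a homotopy between the two induced maps $F_ZK_0\to G_U[1]$, namely $\eta_U[1]\circ\lambda_F$ and $\lambda_G\circ\eta_Z K_0$. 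This is exactly the square (\ref{eq:recollement-mapping-space}). The case of several source variables follows by iterating this one variable at a time, since a pullback of pullbacks is again a pullback.

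\textbf{Step 3.} We check compatibility with composition. The identification in Step 1 is natural in the target $\mathcal D$ under postcomposition with exact functors. Under precomposition with multi-exact functors, it is natural because $\lambda_H$ is defined functorially from the single universal boundary $\partial_{Y,0}$. Consequently, whiskering and composition of transformations preserve the triple decomposition.

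\textbf{Main obstacle.} The main obstacle is Step 1 at the $\infty$-categorical level: making precise that exact functors out of a recollement are classified by the triples $(H_U,H_Z,\lambda_H)$, with coherent homotopies rather than only on homotopy categories. We would handle this by realizing $\DA\cons(Y_0)$ as the oriented fibre product $\DA\cons(Z_0)\overrightarrow{\times}_{K_0}\DA\cons(U_0)$. We would then use that $\operatorname{Fun}^{\mathrm{ex}}(-,\mathcal D)$ converts the corresponding lax colimit, in $\mathrm{Cat}^{\mathrm{ex}}_\infty$, into the lax limit. This is available from the stable recollement results in \cite[A.8]{lurieHA}.
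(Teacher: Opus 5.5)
Your proposal is correct and follows essentially the same route as the paper: both identify $\DA\cons(Y_0)$ with the recollement category of triples via \cite[Section~A.8]{lurieHA}, use the localization triangle to see that an exact functor $H$ is classified by $(H_U,H_Z,\lambda_H)$, read off the pullback square of mapping spaces from this classification, and then iterate over source variables, using that $\lambda_H$ comes functorially from the single boundary $\partial_{Y,0}$ to get compatibility with composition. The difference is only packaging: you phrase the classification as $\operatorname{Fun}^{\mathrm{ex}}(-,\mathcal D)$ turning the recollement into a pullback over an arrow category, whereas the paper writes down the explicit inverse $(M,A')\mapsto\mathrm{fib}\bigl(H_Z(z_0^*M,A')\to H_U(u_0^*M,A')[1]\bigr)$, and it is this fibre formula, not \cite[Section~A.8]{lurieHA} itself (which only gives the category-level description), that supplies the universal property for exact functors that you flag as the main obstacle.
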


\begin{proof}
By stable recollement
\cite[Proposition~A.8.11 and Remark~A.8.18]{lurieHA},
$\DA\cons(Y_0)$ is reconstructed from
$\DA\cons(U_0)$, $\DA\cons(Z_0)$, and the gluing functor
$K_0=z_0^*u_{0,*}$. More concretely, applying an exact functor $H$
to the functorial localization triangle
\[
  u_{0,!}u_0^*M
  \longrightarrow M
  \longrightarrow z_{0,*}z_0^*M
  \longrightarrow u_{0,!}u_0^*M[1]
\]
shows that $H$ is determined by $H_U$, $H_Z$, and
$\lambda_H=H(\partial_{Y,0}\times\id)$. Conversely, these data
reconstruct $H$ by
\begin{equation}
  (M,A')
  \longmapsto
  \operatorname{fib}\Bigl(
    H_Z(z_0^*M,A')
    \longrightarrow H_Z(K_0u_0^*M,A')
    \mathop{\longrightarrow}^{\lambda_H}
      H_U(u_0^*M,A')[1]
  \Bigr).
  \label{eq:functor-reconstruction}
\end{equation}
The localization triangles show that these constructions are inverse.
A transformation $F\to G$ is therefore the same as transformations on
the open and closed parts together with a homotopy between their images
in the lower-right corner of
\eqref{eq:recollement-mapping-space}. This proves that the square is a
pullback. Applying the same argument inductively yields the
several-variable assertion.

Compatibility with composition follows directly from the construction.
Indeed, suppose
\[
  T=H\circ(L_1,\ldots,L_m)
\]
and the glued variable occurs among the inputs of $L_j$. The boundary
transformation of $T$ is obtained by applying $H$ to that of $L_j$:
\[
\begin{aligned}
  T_Z\circ(K_0\times\id)
  &\simeq
  H\bigl(
    \ldots,
    (L_j)_Z\circ(K_0\times\id),
    \ldots
  \bigr)
  \longrightarrow
  H\bigl(\ldots,(L_j)_U[1],\ldots\bigr)
  \simeq T_U[1].
\end{aligned}
\]
The several-variable assertion follows by applying the one-variable
argument inductively, on the number of glued source categories. Since the one-variable
pullback square is natural under composition, so is the several-variable version.
\end{proof}

As in Subsection \ref{subsection:nearby-comparison}, write
\[
  e_0^*=i_0^*n_0^*:
  \DA\ula(\Hk{X})
  \longrightarrow
  \DA\cons(\Hk{}^{\mathrm{eq}})
\]
for restriction to the equal-characteristic special fiber. For
$M\in\DA\ula(\Hk{X})$, let
\begin{equation}
  \epsilon_M:
  \mcB(e_0^*M)
  \xrightarrow{\sim}
  \Psi_{h_1}\!\left(
    (\rho_\eta^{-1})^*j_0^*n_0^*M
  \right)
  \label{eq:ula-comparison}
\end{equation}
denote the equivalence of \eqref{eq:reflection-specialization}.

Let $\star_{X,\alpha}$ denote the ordered iterated convolution over $X$
associated with $\alpha$. By \eqref{eq:ula-nearby-restriction}, for
$E\in\DA\ula(\Hk{S_1})$ there is a natural equivalence
\[
  \Psi_{h_1}(j_1^*E)
  \xrightarrow{\sim}
  i_1^*E.
\]
By Lemma \ref{lemma:convolutionbasechange}, restriction is
convolution-monoidal, while Proposition
\ref{prop: generic equivalence} shows that
\[
  (\rho_\eta^{-1})^*:
  \DA\ula(\Hk{S_{0,\eta}})
  \xrightarrow{\sim}
  \DA\ula(\Hk{S_{1,\eta}})
\]
is convolution-monoidal. Consequently, under the identification above,
the composite
\[
  M\longmapsto
  \Psi_{h_1}\!\left(
    (\rho_\eta^{-1})^*j_0^*n_0^*M
  \right)
\]
is convolution-monoidal. Transporting these structure morphisms through the equivalence $\epsilon$
of \eqref{eq:ula-comparison} gives, for
$M_1,\ldots,M_r\in\DA\ula(\Hk{X})$, the following natural equivalence:
\begin{equation}
\begin{split}
  \gamma^\alpha_{M_1,\ldots,M_r}:\quad
  &\star_{1,\alpha}\bigl(
    \mcB(e_0^*M_1),\ldots,\mcB(e_0^*M_r)
  \bigr)
  \\
  &\xrightarrow{\ \star_{1,\alpha}
    (\epsilon_{M_1},\ldots,\epsilon_{M_r})\ }
  \star_{1,\alpha}\left(
    \Psi_{h_1}\!\left((\rho_\eta^{-1})^*j_0^*n_0^*M_1\right),
    \ldots,
    \Psi_{h_1}\!\left((\rho_\eta^{-1})^*j_0^*n_0^*M_r\right)
  \right)
  \\
  &\xrightarrow{\sim}
  \Psi_{h_1}\!\left(
    (\rho_\eta^{-1})^*j_0^*n_0^*
    \star_{X,\alpha}(M_1,\ldots,M_r)
  \right)
  \\
  &\xrightarrow{\ \epsilon_{\star_{X,\alpha}
      (M_1,\ldots,M_r)}^{-1}\ }
  \mcB\bigl(
    e_0^*\star_{X,\alpha}(M_1,\ldots,M_r)
  \bigr)
  \\
  &\xrightarrow{\sim}
  \mcB\bigl(
    \star_{0,\alpha}(e_0^*M_1,\ldots,e_0^*M_r)
  \bigr).
\end{split}
\label{eq:ula-monoidal-comparison}
\end{equation}
For $r=0$, writing $\delta_X$, $\delta_0$, and
$\delta_1$ for the respective units of convolution, the corresponding
unit morphism is
\begin{equation}
  \delta_1
  \xrightarrow{\sim}
  \Psi_{h_1}\!\left(
    (\rho_\eta^{-1})^*j_0^*n_0^*\delta_X
  \right)
  \xrightarrow{\ \epsilon_{\delta_X}^{-1}\ }
  \mcB(e_0^*\delta_X)
  \xrightarrow{\sim}
  \mcB(\delta_0).
  \label{eq:nullary-comparison}
\end{equation}
The first and last arrows are the unit equivalences of the
convolution-monoidal functors above. Since their structure morphisms
satisfy the unit and composition compatibilities, the same is true of
the morphisms \eqref{eq:ula-monoidal-comparison} and
\eqref{eq:nullary-comparison} obtained by transport through $\epsilon$.

We now extend these morphisms to arbitrary constructible motives by
induction on their supports. Let
$c_X:C_X\hookrightarrow\Hk{X}$ be a Schubert stratum, with special fibre
$c_0:C_0\hookrightarrow\Hk{}^{\mathrm{eq}}$. By the constant
classifying-stack description used in the proof of Proposition
\ref{proposition:comparison-single-stratum}, every
$A\in\DA\cons(C_0)$ has a functorial ULA lift
\[
  A_X\in\DA\ula(C_X),
  \qquad
  i_0^*n_0^*A_X\simeq A.
\]
By Corollary \ref{corollary:closed}, $c_{X,!}A_X$ is ULA and
\[
  e_0^*c_{X,!}A_X\simeq c_{0,!}A.
\]
On objects whose support is a single Schubert stratum, we define $\mu^\alpha$ by evaluating
\eqref{eq:ula-monoidal-comparison} on these extensions by zero.

We will proceed by induction on the number of Schubert cells in supports. Without loss of generality, we may
assume that only the first input is supported on multiple cells.
Suppose that the support of this object is decomposed as
\[
  Y_X=U_X\sqcup Z_X,
\]
where $U_X$ is open, and assume that $\mu^\alpha$ has already been
constructed when the first input is supported on $U_0$ or on $Z_0$.
For $A\in\DA\cons(U_0)$, let
\[
  A_X\in\DA\ula(U_X),
  \qquad
  i_0^*n_0^*A_X\simeq A,
\]
be its constant lift. As in the proof of Theorem
\ref{Theorem: Main}, put
\[
  G_{Z,X}(A)=z_X^*u_{X,*}A_X.
\]
By Corollary \ref{corollary:closed}, $G_{Z,X}(A)$ is ULA. The
localization triangle on $Y_X$ gives a natural morphism
\begin{equation}
  b_A:
  z_{X,*}G_{Z,X}(A)
  \longrightarrow
  u_{X,!}A_X[1].
  \label{eq:lifted-boundary}
\end{equation}
Base change identifies its restriction to the equal-characteristic
special fibre with the boundary morphism
\eqref{eq:universal-boundary}:
\begin{equation}
  e_0^*G_{Z,X}(A)\simeq K_0(A),
  \qquad
  e_0^*(b_A)=
  \partial_{Y,0}(A):
  z_{0,*}K_0(A)\longrightarrow u_{0,!}A[1].
  \label{eq:boundary-special-fibre}
\end{equation}

For ULA lifts $M_2,\ldots,M_r$ of the remaining inputs, naturality of
\eqref{eq:ula-monoidal-comparison} with respect to $b_A$ gives a
homotopy
\begin{equation}
\begin{split}
  &\gamma^\alpha_{u_{X,!}A_X,M_2,\ldots,M_r}[1]
  \circ
  \star_{1,\alpha}\bigl(
    \mcB(e_0^*b_A),\id,\ldots,\id
  \bigr)
  \\
  &\qquad\simeq
  \mcB\!\left(
    \star_{0,\alpha}(e_0^*b_A,\id,\ldots,\id)
  \right)
  \circ
  \gamma^\alpha_{z_{X,*}G_{Z,X}(A),M_2,\ldots,M_r}.
\end{split}
\label{eq:boundary-homotopy}
\end{equation}
By the induction hypothesis, the two occurrences of $\gamma^\alpha$ in
\eqref{eq:boundary-homotopy} restrict to the previously constructed
operations on $U_0$ and $Z_0$. In view of
\eqref{eq:boundary-special-fibre}, the homotopy
\eqref{eq:boundary-homotopy} identifies their images in the lower-right
corner of the pullback square
\eqref{eq:recollement-mapping-space}. Lemma
\ref{lem:multilinear-recollement} therefore determines
\[
  \mu^\alpha:
  \left.F_\alpha\right|_{\DA\cons(Y_0)\times\mathcal A}
  \longrightarrow
  \left.G_\alpha\right|_{\DA\cons(Y_0)\times\mathcal A},
\]
where $\mathcal A$ is the product of the remaining source categories.

Applying this argument successively in the remaining source variables
constructs $\mu^\alpha$ for every tuple of input objects. Since
the pullback square of Lemma \ref{lem:multilinear-recollement} is compatible with composition, the same induction constructs the unit and
composition homotopies from those satisfied by
\eqref{eq:ula-monoidal-comparison} and
\eqref{eq:nullary-comparison}. Thus, for every active morphism $\alpha$,
we obtain compatible natural transformations
\begin{equation}
  \mu^\alpha:F_\alpha\longrightarrow G_\alpha.
  \label{eq:bounded-structure-maps}
\end{equation}

Further, note that each $\mu^\alpha$ is an equivalence. Indeed, this holds on input objects
supported on a single stratum by \eqref{eq:ula-monoidal-comparison}, and it is clear that our
inductive recollement preserves equivalences. For the unit it
follows from \eqref{eq:nullary-comparison}.

Together with the inert morphisms fixed above, the transformations
$\mu^\alpha$ therefore define a section
\[
  s:\operatorname{Assoc}^{\otimes}
  \longrightarrow
  \mathcal E^{\otimes}
\]
and hence a lax convolution-monoidal structure on $\mcB$ by
\cite[Example~2.2.6.10]{lurieHA}. Since every structure morphism
$\mu^\alpha$ is an equivalence,
\cite[Definition~2.1.3.7]{lurieHA} shows that the functor $\mathcal{B}$ is
convolution-monoidal.

\section{Compatibility with monoidal structures on étale Satake categories}\label{Section: Compatibility with symmetric monoidal structures}
In this section we work over $k=\overline{k}_{F}.$
Let $G$ be as in Situation \ref{Situation: tamely ramified group} and we continue to assume that $G=G^{*}$ is quasi-split.
Then, for $R\in \Perf_{k},$ $B(R)$ is an $\breve{\mcO}:=\mcO_{\breve{F}}$-algebra.
Let $x_{0}\in \mcA(G,S;\breve{F})$ be
\emph{special} (see \cite[Definition 7.11.1]{KalethaPrasadBruhatTits}); recall that such a point is automatically a vertex.
We obtain the Pappas--Zhu group scheme $\mcG:=\mcG_{x_{0}}$ and we write $\widetilde{W}_{x_{0}}\subseteq \widetilde{W}$ for the associated subgroup of the Iwahori--Weyl group.
When considering Hecke stacks we will always assume $\mcG^{\prime}=\mcG:=\mcG_{x_{0}}$ in this section.

Let $\ell\neq p$ be a prime and $\Lambda\in \lbrace \mbF,L,\mcO_{L},\overline{\mathbb{Q}}_{\ell} \rbrace$, where $L/\mathbb{Q}_{\ell}$ is an algebraic extension, $\mcO_{L}$ is its ring of integers and $\mbF$ is the residue field of $\mcO_{L}$.
Let $T$ be a perfect $\Divtilk$-scheme.
By Proposition \ref{Proposition: representability Bando affine Grassmannian} and Lemma \ref{Lemma: Bando affine Grassmannian ind-projective for Pappas--Zhu group schemes}, we may write $\Gr^{B}_{\mcG,T}=\colim_{i\in I}X_{i}$ as a colimit of perfections of projective $T$-schemes, perfectly of finite presentation along perfectly finitely presented closed immersions such that $X_{i}$ is $L^{+}\mcG$-invariant.
Exactly as in Section \ref{subsection: bounded and ula motives on Hecke stacks}, we may define the category of \emph{bounded} étale sheaves on the Hecke stack as $$D(\Hk{\mcG,T},\Lambda)^{\bd}=\colim_{i\in I} D_{L^{+}\mcG}(X_{i},\Lambda).$$
As in Section \ref{subsection: Convolution}, we may define the \emph{convolution} product
$$
\star_{T}\colon D(\Hk{\mcG,T},\Lambda) \otimes_{\mcA_{T}^{\et}} D(\Hk{\mcG,T},\Lambda)\rightarrow D(\Hk{\mcG,T},\Lambda).
$$
The convolution product commutes with pullback along $\Divtilk$-scheme morphisms $T^{\prime}\rightarrow T$, preserves $D(\Hk{\mcG,-},\Lambda)^{\bd}$ and induces a monoidal structure on the $\infty$-category $D(\Hk{\mcG,-},\Lambda)^{\bd}$.

Recall that the notion of ULA-ness for étale sheaves is defined as for étale motives (see e.g. \cite[Definition 3.2]{HansenScholze}).
Following Bando, we also use Hansen-Scholze's notion of \emph{relative perversity} (\cite{HansenScholze}) to define the notion of a \emph{perverse sheaf} $\mcF\in D(\Hk{\mcG,T},\Lambda)^{\bd}$ as in \cite[Definition 6.2]{bandoComparison} and we use the same definition of a \emph{flat perverse sheaf} as in \cite[Definition 6.3]{bandoComparison}.
Then we let $\Sat(\Hk{\mcG,T},\Lambda)\subset D{\ula}(\Hk{\mcG,T},\Lambda)^{\bd}$ be the full subcategory of flat perverse sheaves; it is called the \emph{Satake category}.
Note that the convolution product on 
$D(\Hk{\mcG,T},\Lambda)^{\bd}$ induces a monoidal structure on $\Sat(\Hk{\mcG,T},\Lambda)$ seen inside the homotopy category of $D(\Hk{\mcG,T},\Lambda)^{\bd}$.
\begin{proposition}\label{proposition: Satake categories get identified}
    The equivalence in Theorem \ref{Theorem: Main} induces upon passing to étale realizations a monoidal equivalence
    $$
    \Sat(\Hk{\mcG,k}^{\eq},\Lambda)\simeq \Sat(\Hk{\mcG,k}^{\Witt},\Lambda).
    $$
\end{proposition}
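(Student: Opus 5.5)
The plan is to realize the whole nearby-cycles argument of Theorem \ref{Theorem: Main} with étale coefficients, check that it is compatible with the motivic one under the $\ell$-adic realization, and then show that the resulting sheaf equivalence preserves flat perverse sheaves.

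First, for $\Lambda=\mathcal{O}_L$ or $\mathbb{F}$ the étale realization functor $\DA\cons(-,\Z[\tfrac1p])\to D(-,\Lambda)^{\bd}$ commutes with the six operations on finite-type schemes. It also commutes with the nearby cycles functor $\Psi$, by Ayoub's comparison of motivic and étale nearby cycles as recorded in \cite{preisUlaMotives}. Since it commutes with $*$-pullback and exterior products, it preserves ULA objects and commutes with convolution (Lemma \ref{lemma:convolutionbasechange}). Next, I run the proof of Theorem \ref{Theorem: Main} verbatim in $D(-,\Lambda)^{\bd}$. The generic isomorphism $\rho_\eta$ of Proposition \ref{prop: generic equivalence} is geometric, and the single-stratum equivalences of Proposition \ref{proposition:comparison-single-stratum} use only homotopy invariance, which holds for étale sheaves. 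This produces an étale equivalence $\mcB^{\et}$, characterized on ULA objects over $\Hk{X}$ by the analogue of \eqref{eq:reflection-specialization}: $\mcB^{\et}(e_0^*M)\simeq \Psi_{h_1}((\rho_\eta^{-1})^*j_0^*n_0^*M)$. Because realization commutes with every ingredient (recollement triples, gluing functors, the transformation $\lambda_Y$), I get $\mathrm{Real}\circ\mcB\simeq\mcB^{\et}\circ\mathrm{Real}$ as monoidal functors; this is the sense in which $\mcB^{\et}$ is ``induced'' by $\mcB$. The cases $\Lambda=L$ and $\overline{\mathbb{Q}}_\ell$ then follow by inverting $\ell$ and passing to the colimit.

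The main point is that $\mcB^{\et}$ carries $\Sat(\Hk{\mcG,k}^{\eq},\Lambda)$ onto $\Sat(\Hk{\mcG,k}^{\Witt},\Lambda)$. I would do this as in \cite[Section 6]{bandoComparison}. Given a flat perverse $\mathcal{F}$ on the equal-characteristic fiber, I extend it to a ULA object $\mathcal{F}_X$ over $X$ (or over the henselization $S_0$) that is relatively perverse and flat in the sense of Hansen--Scholze. The natural candidate is the relative intermediate extension built stratum by stratum from the constant extensions $\underline{Q}_X$. Its relative perversity can be checked fiberwise using the stratification: Lemma \ref{lemma: structure of Orbits in Bando affine Grassmannian} says the strata are smooth over $\Divtil$ of constant relative dimension, so the dimension bookkeeping is the same on both special fibers and on the generic fiber.

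Then $\rho_\eta$ preserves relative perversity on the generic fiber, since it is an isomorphism of stacks respecting strata. Moreover, nearby cycles of a ULA relatively perverse flat sheaf is flat perverse, because by \eqref{eq:ula-nearby-restriction} it equals the restriction to the special fiber, which is perverse by the definition of relative perversity \cite{HansenScholze}. This gives the inclusion in one direction. Applying the same argument to the inverse equivalence gives the reverse inclusion, or one can use the symmetry $r$ of $X$. I expect this construction of flat relatively perverse ULA extensions over the Bando family to be the main obstacle. My first attempt would be to reduce to Bando's argument via the description of the stalks $[w]^*$ in Lemma \ref{lemma:stalkwise}.

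Finally, monoidality on $\Sat$ follows from the monoidality of $\mcB$ in \S\ref{ss: monoidality} passed to homotopy categories, since convolution preserves $\Sat$ on both sides.
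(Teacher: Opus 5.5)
Your realization step is fine; it plays the role of the paper's identification of the $\ell$-adic realization of $\mcB$ with the ramified version of Bando's functor. The genuine gap is at the step you flag yourself, and that step is the actual content of the proposition: showing that flat perversity propagates from the equal-characteristic fibre through the family.

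\textbf{Main gap: the lift.} Your candidate does not do this. An object of $\Sat(\Hk{\mcG,k}^{\eq},\Lambda)$ is an arbitrary flat perverse sheaf with nontrivial gluing data between strata. For $\Lambda=\mbF$ or $\mcO_L$ this category is not semisimple, and its objects are not intermediate extensions from a single stratum. So a relative intermediate extension built from the constant extensions $\underline{Q}_X$ does not lift $\mathcal{F}$ at all.

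Constant relative dimension of the strata also does not, by itself, make perversity of one geometric fibre imply perversity of another. What is needed is that the stalks and costalks of the lift along the strata are locally constant over the base and compatible with base change. For your candidate, this is exactly what remains unproved.

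\textbf{Second, smaller gap: the nearby cycles step.} \eqref{eq:ula-nearby-restriction} applies only to objects that are ULA over $S_1$. The transported object $(\rho_\eta^{-1})^*j_0^*(\cdot)$ lives only over $S_{1,\eta}$, and $\rho_\eta$ does not extend over the closed points. You would need Gabber's perverse $t$-exactness of étale nearby cycles, or a separate construction of a ULA extension over $S_1$.

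\textbf{How the paper closes the gap.} It uses constant term functors rather than lifts. It takes $\CT_{\mcG^+,\mcG}$ for a regular cocharacter, which has three properties:
\begin{itemize}
\item it is defined over the whole Bando family, commutes with base change in $T$, and is conservative (Lemma \ref{lem: properties of naive constant term});
\item it detects membership in $\Sat$ (Lemma \ref{Lemma: recoginzing Satake via constant term});
\item it thereby reduces everything to the torus.
\end{itemize}
The torus Hecke stack is a disjoint union of classifying stacks over the base, indexed via the Kottwitz map. There, fibrewise perversity of a ULA object visibly does not depend on the fibre, and Bando's proof of \cite[Theorem 6.6]{bandoComparison} applies. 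This route also identifies the fibre functors, which is what the symmetric monoidal refinement uses.

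\textbf{How your route could be repaired without constant terms.}
\begin{itemize}
\item Take the ULA lift over the strictly henselian $S_0$ provided by étale contractibility, which is available after realization.
\item For $M$ ULA over a connected base, perversity of a geometric fibre is a degree condition on the restrictions of $M$ and of $j_w^!M$ to the sections $[w]$.
\item These restrictions are dualizable and commute with base change by Lemma \ref{lemma:stalkwise} and Corollary \ref{corollary:closed}, and the strata have constant relative dimension.
\end{itemize}
That would be a more elementary alternative to constant terms, though it does not identify the fibre functors. As written, however, your proposal does not supply this argument.
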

\begin{Remark}
    Both Satake categories in Proposition \ref{proposition: Satake categories get identified} carry fiber functors in the form of total cohomology (\cite{ModularRamifiedSatake} and \cite{vandenHoveRamifiedMotivicSatake}).
    The proof we are about to present moreover identifies these fiber functors.
    Consequently, under the imported symmetry constraints, the equivalence is \emph{symmetric} monoidal.
\end{Remark}
For the proof, we need to discuss \emph{constant-term functors} in our setting. 

For this, fix a rigidification $(\underline{G},\underline{A}=\underline{S},\underline{P})$.
The split torus $\underline{S}$ over $\mcO[u^{\pm}]$ extends to a split torus $\mcS$ over $\mcO[u]$.
A given cocharacter $\lambda\colon \mathbb{G}_{m,\mcO[u^{\pm}]}\rightarrow \underline{S}$ then extends to a cocharacter of $\mcS$ because $\Spec(\mcO[u])$ is connected and $\mcS$ is split.
Composing with the inclusion $\mcS\subset \mcT \subset \mcG$, we obtain a cocharacter $\lambda$ of $\mcG$.
Definitions of the following objects can be found in \cite[Section 1.1]{RicharzGmAction}.
We obtain the attractor resp. repeller locus $\mcG^{+}$ ($\underline{G}^{+}$) resp. $\mcG^{-}$ ($\underline{G}^{-}$) and the fixed point locus $\mcG^{0}$ ($\underline{G}^{0}$).
Observe that we have natural limit morphisms $q^{\pm}\colon \mcG^{\pm}\rightarrow \mcG^{0}$ and we denote the inclusions by $i^{\pm}\colon \mcG^{\pm}\rightarrow  \mcG$.
We summarize the basic properties in the following statement.
\begin{Lemma}
    \begin{enumerate}
    \item[(a)] $\mcG^{0}$, $\mcG^{\pm}$ are smooth closed $\mcO[u]$-subgroup schemes of $\mcG$ with connected fibers, and after inverting $u$ they agree with $\underline{G}^{0}$ resp. $\underline{G}^{\pm}$,
    \item[(b)] for $C\in \Perf_{k_{F}}$ an algebraically closed field and all $(\xi_{1},\xi_{2})\in \Divtilk(C)$, $\mcG^{0}_{B^{+}_{(\xi_{1},\xi_{2})}(C)}$ is a parahoric subgroup scheme of $\underline{G}^{0}_{B_{(\xi_{1},\xi_{2})}(C)}$,
    \item[(c)] $\mcG^{\pm}=\mcG^{0}\ltimes \mcN^{\pm}$, where $\mcN^{\pm}$ is smooth affine with geometrically connected fibers.
    \end{enumerate}
\end{Lemma}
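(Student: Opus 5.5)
The plan is to reduce everything to Richarz's results on $\mathbb{G}_m$-actions on smooth affine group schemes, applied to the $\mathbb{G}_{m,\mcO[u]}$-action on $\mcG$ given by conjugation through $\lambda\colon\mathbb{G}_{m,\mcO[u]}\to\mcS\subset\mcG$. Since $\mcG$ is smooth and affine over the regular base $\mcO[u]$, \cite[Section 1.1]{RicharzGmAction} gives the following. The fixed points $\mcG^{0}$ form a closed smooth subgroup. The attractor $\mcG^{+}$ and repeller $\mcG^{-}$ are represented by closed smooth subgroups, and $q^{\pm}\colon\mcG^{\pm}\to\mcG^{0}$ are affine with geometrically connected fibers. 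All these constructions commute with arbitrary base change. In particular, inverting $u$ recovers $\underline{G}^{0},\underline{G}^{\pm}$, which are the Levi $\underline{M}_\lambda=\Cent_{\underline G}(\lambda)$ and the parabolics $\underline{P}^{\pm}_\lambda$. This settles the generic part of (a).

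For the fiber connectedness in (a), I will use that $q^{\pm}$ has connected fibers and a section, given by the inclusion $\mcG^0\subset\mcG^\pm$. Hence $\mcG^{\pm}$ has connected fibers as soon as $\mcG^{0}$ does, so the real issue is the fibers of $\mcG^0$. Here I would argue fiberwise. Over a field $\kappa$, the fixed point scheme of a $\mathbb{G}_m$-action on the connected smooth group $\mcG_\kappa$ is the centralizer of a torus. The centralizer of a torus in a connected smooth affine group is connected (\cite[Proposition A.8.10]{ConradGabberPrasad}-type statement). So connectedness reduces to connectedness of the fibers of $\mcG$, which is part of Theorem \ref{Theorem: PZ}.

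For (c), I would take $\mcN^{\pm}=\ker(q^{\pm})$. By Richarz this is smooth and affine with connected unipotent fibers, and it is contracted to the unit section by the $\mathbb{G}_m$-action. Since $q^\pm$ is a group homomorphism split by $\mcG^0\hookrightarrow\mcG^\pm$, we get a semidirect product decomposition $\mcG^{\pm}=\mcG^{0}\ltimes\mcN^{\pm}$.

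The main obstacle is (b): identifying $\mcG^0\otimes B^{+}(C)$ with a parahoric of $\underline{M}_\lambda$. By base change compatibility of fixed points and Theorem \ref{Theorem: PZ}(b), we have $\mcG^0_{B^+(C)}=(\mcP_{\mcF})^{0}$, where $\mcP_{\mcF}$ is the Bruhat--Tits parahoric over $B^{+}(C)$. It therefore suffices to know that the $\lambda$-fixed points of a parahoric group scheme form a parahoric of the Levi $\Cent(\lambda)$. For this I plan to use the following chain of facts:
\begin{itemize}
\item $(\mcP_\mcF)^0$ is smooth with connected fibers by (a), which is exactly why connectedness was established first;
\item its generic fiber is $\underline{M}_{\lambda,B(C)}$;
\item its $B^+(C)$-points equal $\mcP_{\mcF}(B^+(C))\cap \underline{M}_\lambda(B(C))$;
\item by \cite[Chapter 5 / Proposition on Levi subgroups]{KalethaPrasadBruhatTits}, this intersection is the connected stabilizer in $\underline{M}_\lambda(B(C))$ of the image of $\mcF$ in $\mcB(\underline{M}_\lambda)$.
\end{itemize}
The last bullet uses that $\lambda$ factors through the maximal split torus $\underline{S}$, so $\mcF$ lies in the common apartment. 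Finally, \cite[Corollary 2.10.11]{KalethaPrasadBruhatTits} will upgrade this equality of $B^+(C)$-points, for smooth models with connected fibers over a strictly henselian base, to an isomorphism with the Bruhat--Tits parahoric. This is the same argument used in the proof of Theorem \ref{Theorem: PZ}.
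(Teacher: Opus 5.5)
Your proposal is correct and follows essentially the same route as the paper. The paper simply defers to \cite[Lemma 4.5]{HainesRicharzJAMS}: the results on $\mathbb{G}_m$-actions on smooth affine group schemes give (a) and (c), and (b) follows from base change of the fixed-point locus, Theorem \ref{Theorem: PZ}(b) over the strictly henselian ring $B^{+}_{(\xi_{1},\xi_{2})}(C)$, and the Kaletha--Prasad arguments (\cite[Corollary 2.10.11, Lemma A.4.26]{KalethaPrasadBruhatTits}) already used in the proof of Theorem \ref{Theorem: PZ}, which is precisely how your identification of $\mcP_{\mcF}(B^{+}(C))\cap\underline{M}_{\lambda}(B(C))$ with the parahoric of $\underline{M}_{\lambda}$ can be justified from the connected fibers of $\mcG^{0}$.
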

\begin{proof}
    This follows as in \cite[Lemma 4.5]{HainesRicharzJAMS}, using the arguments in the proof of Theorem \ref{Theorem: PZ} together with the fact that attractor, repeller and fixed point loci commute with base change. 
\end{proof}
Consider the composition $\mathbb{G}_{m,k}^{\perf}\rightarrow L^{+}\mathbb{G}_{m}\rightarrow L^{+}\mcG$ given by the Teichm\"uller lift and the cocharacter $\lambda$.
This gives an action of $\mathbb{G}_{m,k}^{\perf}$ on $\Gr^{B}_{\mcG}$, which preserves Schubert varieties.
Consider the attractor resp. repeller subsheaves $(\Gr^{B}_{\mcG})^{\pm}\subset \Gr_{\mcG}^{B}$ and the fixed points subsheaf $(\Gr^{B}_{\mcG})^{0}\subset \Gr^{B}_{\mcG}$.
Note that $(\Gr^{B}_{\mcG})^{\pm}$ is representable by $\Gr^{B}_{\mcG^{\pm}}$ and $(\Gr^{B}_{\mcG})^{0}$ is representable by $\Gr^{B}_{\mcG^{0}}$ since we are assuming that $\mcG$ is constructed from a \emph{special} parahoric (namely one may argue the same way as in \cite[Proposition 3.9]{vandenHoveRamifiedMotivicSatake}).

We obtain $i^{\pm}\colon \Gr^{B}_{\mcG^{\pm}}\rightarrow \Gr^{B}_{\mcG}$ and $p^{+}\colon \Gr^{B}_{\mcG^{+}}\rightarrow \Gr^{B}_{\mcG^{0}}$ resp. $p^{-}\colon \Gr^{B}_{\mcG^{-}}\rightarrow \Gr^{B}_{\mcG^{0}}$.
These maps are $L^{+}\mcG^{0}$-equivariant.
We keep on denoting by $p^{\pm}$ resp. $i^{\pm}$ the maps obtained by passing to the quotient by $L^{+}\mcG^{0}$.
Let us denote by $h_{\mcG^{0},\mcG}\colon [L^{+}\mcG^{0}\backslash \Gr^{B}_{\mcG,T}]\rightarrow \Hk{\mcG,T}$ the natural map.
We obtain the \emph{naive} constant term functor
$$
\CT_{\mcG^{+},\mcG}^{\naive}\colon D(\Hk{\mcG,T},\Lambda)^{\bd}\rightarrow D(\Hk{\mcG^{0},T},\Lambda)^{\bd}
$$
given by $\mcF\mapsto p^{+}_{!}i^{+,*}h^{*}_{\mcG^{0},\mcG}(\mcF)$ (all functors are derived).
We collect properties of this construction that mirror classically known properties of the constant term functor.
\begin{Lemma}\label{lem: properties of naive constant term}
In the above situation we have that:
    \begin{enumerate}
        \item[(a)] $\CT^{\naive}_{\mcG^{+},\mcG}(\mcF)\simeq p^{-}_{*}i^{-,!}h^{*}_{\mcG^{0},\mcG}(\mcF)$ and $\CT^{\naive}_{\mcG^{+},\mcG}$ commutes with any base change in $T$,
        \item[(b)] If $\lambda$ is a regular cocharacter, then $\CT^{\naive}_{\mcG^{+},\mcG}$ is conservative.
    \end{enumerate}
\end{Lemma}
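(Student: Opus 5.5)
Both parts reduce to Braden's hyperbolic localization and to the Schubert stratification of Lemma~\ref{lemma: structure of Orbits in Bando affine Grassmannian}, following \cite[Section VI.3]{farguesscholzeGeometrization}, \cite{RicharzGmAction} and \cite[Proposition 3.9]{vandenHoveRamifiedMotivicSatake}.

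For (a), I would work one bound $X_i$ at a time, so the question is about $\mathbb{G}_{m,k}^{\perf}$-equivariant objects on a perfectly projective $T$-scheme. The $\mathbb{G}_m$-action comes from $\lambda$ through $L^+\mcG$. Hence $h^*_{\mcG^0,\mcG}\mcF$ is monodromic, indeed $L^+\mcG^0$-equivariant, for the induced action on $X_i$. Richarz's form of Braden's theorem \cite[Theorem B]{RicharzGmAction}, applied to the $T$-scheme $X_i$ with attractor $X_i^+=X_i\cap\Gr^B_{\mcG^+}$, repeller $X_i^-$ and fixed points $X_i^0$, then gives
\[
p^+_!i^{+,*}\simeq p^-_*i^{-,!}.
\]
The identifications of the attractor, repeller and fixed-point loci with $\Gr^B_{\mcG^\pm}$ and $\Gr^B_{\mcG^0}$ are taken from the discussion before the lemma, which uses that the parahoric is special. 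Descending along the smooth $L^{(N)}\mcG^0$-quotient presentation, this identification passes to the quotient stacks. Base change in $T$ then follows formally. The functor $p^+_!i^{+,*}$ commutes with all pullbacks by proper base change and the fact that $!$-pushforward commutes with $*$-pullback. The functor $p^-_*i^{-,!}$ commutes with all $!$-pullbacks for the same reason. The only non-formal input is that the loci $X_i^{\pm},X_i^0$ themselves commute with base change in $T$. This holds because they are defined functorially from the $\mathbb{G}_m$-action. Finally, the construction is compatible with enlarging $i$, because the transition maps are closed immersions and both sides commute with proper pushforward.

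For (b), I would argue by dévissage on a bounded $\mcF\ne0$. Choose a Schubert stratum $\Hk{T}^w$ that is open in the support of $\mcF$. By (a) we may base change to geometric points of $T$ and assume $T=\Spec C$ with $C$ algebraically closed. For regular $\lambda$, $\mcG^0=\mcT$ is the connected Néron model, and $\Gr^B_{\mcT}$ is discrete with points indexed by the $\mcT$-translation elements of $\widetilde{W}$. The components of $\Gr^{B}_{\mcG^+}$ are the semi-infinite orbits $S_\nu$. Inside the Schubert variety $\Gr^{B,\le w}_{\mcG}$, take $\nu$ to be an extremal translation element in the $\widetilde{W}_{x_0}$-orbit of $w$. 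Then $S_\nu\cap\Gr^{B,\le w}_{\mcG}$ is the single point $\dot\nu$, and this point lies in the open cell. Consequently the $\nu$-component of $\CT^{\naive}(\mcF)$ is the stalk of $\mcF$ at $\dot\nu$, pulled back to the classifying stack of $\mcT$. This stalk is nonzero because $\mcF$ is nonzero on the open stratum, which is a classifying stack by Lemma~\ref{lemma: structure of Orbits in Bando affine Grassmannian}(a). Hence $\CT^{\naive}(\mcF)\ne0$.

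I expect the main obstacle to be the claim $S_\nu\cap\Gr^{B,\le w}_{\mcG}=\{\dot\nu\}$ in this Bando setting. I would reduce it to the corresponding statement over a single complete discretely valued field $B_{(\xi_1,\xi_2)}(C)$. Over such a field it is the classical fact, due to Haines--Richarz and Zhu, in the special parahoric case. The reduction uses Lemma~\ref{Lemma: Iwahori Weyl group independent of choice} together with the base change property in Theorem~\ref{Theorem: PZ}.
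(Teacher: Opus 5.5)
Your proposal follows the paper's proof: (a) is Braden's theorem in the form \cite[Theorem B]{RicharzGmAction}, and (b) is the argument of \cite[Proposition 4.6]{ModularRamifiedSatake}, namely picking a stratum open in the support and an extremal translation element whose semi-infinite orbit meets that stratum in a single point (the paper proves this as in \cite[Lemma 5.3]{AGLR}). One step should be stated more carefully, and the paper is equally brief here: identifying the $\nu$-component of $\CT^{\naive}(\mcF)$ with the stalk at $\dot\nu$ needs $S_\nu\cap\operatorname{supp}(\mcF)=\{\dot\nu\}$, not only $S_\nu\cap\Gr^{B,\le w}_{\mcG}=\{\dot\nu\}$, because the support may have other maximal strata; this follows since $S_\nu\cap\Gr^{B,w'}_{\mcG}\neq\emptyset$ forces $w\le w'$, while $w$ is maximal in the support.
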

\begin{proof}
Item (a) follows from Braden's localization theorem in our setting, see \cite[Theorem B]{RicharzGmAction} (see also for a more detailed deduction that works the same way here \cite[Proposition 4.4]{ModularRamifiedSatake}).
For item (b) the same argument as the one given for \cite[Proposition 4.6]{ModularRamifiedSatake} works.
Namely, since $\lambda$ is regular, we know that $\underline{G}^{0}=\underline{T}$ for a maximal tamely ramified torus $\underline{T}$ of $\underline{G}$.
Let us write $\mcG^{0}=\mcT$.
Let $\mcF\in D(\Hk{\mcG,T},\Lambda)^{\bd}$ be non-zero.
Then there must exist $C\in \Perf_{k_{F}}$ algebraically closed and $\Spec(C)\rightarrow T$, such that $\mcF\restriction_{\Hk{\mcG,C}}$ is non-zero.
We also obtain $(\xi_{1},\xi_{2})\in \Divtilk(C)$ and write $B(C)=B_{(\xi_{1},\xi_{2})}(C)$ resp. $B^{+}(C)=B^{+}_{(\xi_{1},\xi_{2})}(C)$.
There exists $\lambda^{\prime}\in X_{*}(\underline{T})^{+}_{\gamma_{0}} \subset X_{*}(\underline{T})_{\gamma_{0}}=\underline{T}(B(C))/\mcT(B^{+}(C))$ such that $\Gr^{\lambda^{\prime},B}_{\mcG,C}=L^{+}_{C}\mcG\cdot t^{\lambda,\prime}\subset \Gr^{B}_{\mcG,C}$ is open in the support of $\mcF\restriction_{\Hk{\mcG,C}}$. Here $t^{\lambda,\prime}\in \underline{T}(B(C))$ is a lift of $\lambda^{\prime}$ and $X_{*}(\underline{T})^{+}_{\gamma_{0}}$ denotes a system of representatives for the quotient of $X_{*}(\underline{T})_{\gamma_{0}}$ by the finite Weyl group $W_{0}$ cf. \cite[Lemma 2.6]{ModularRamifiedSatake}.
Let $\mu$ be the unique $\widetilde{W}_{x_{0}}$ conjugate of $\lambda^{\prime}$ that belongs to $-X_{*}(\underline{T})^{+}_{\gamma_{0}}$.
Then we have for the semi-infinite orbit $S_{\mu,C}$ (same definition as in \cite[Section 3.4]{ModularRamifiedSatake}) that $|S_{\mu,C}\cap \Gr^{\lambda^{\prime},B}_{\mcG,C}|=\lbrace t^{\mu} \rbrace$ by the same proof as \cite[Lemma 5.3]{AGLR}.
Therefore, $\CT^{\naive}(\mcF)\neq 0$ since it is not zero on the component of $\Gr^{B}_{\mcT,C}$ corresponding to $\mu$.
\end{proof}
As usual, we have to shift the naive constant term functor slightly to obtain a functor that is t-exact for the perverse t-structure.
For this, let $\rho\in X^{*}(\underline{T})$ be the half-sum of positive roots of $\underline{G}$ (this identifies with the one for $H$) and similar let $\rho_{G^{0}}$ be the half-sum of positive roots of $\underline{G}^{0}$.
Recall that we have constructed in Definition \ref{def: Kottwitz map on Bando aff Grass} a locally constant map
$$
\kappa_{\underline{G}^{0}}\colon |\Gr^{B}_{\mcG^{0}}|\longrightarrow \pi_{1}(G^{0})_{I}
$$
and pairing with $2\rho-2\rho_{G^{0}}$ (note that this is orthogonal to the coroots of $\underline{G}^{0}$ and $I$-invariant), we obtain a locally constant map
$$
\corr_{\mcG^{0},\mcG}\colon |\Gr^{B}_{\mcG^{0}}|\longrightarrow \mbZ.
$$
\begin{Definition}[Constant term functor]
    The functor
    $$
    \CT_{\mcG^{+},\mcG}\colon D(\Hk{\mcG,T},\Lambda)^{\bd}\rightarrow D(\Hk{\mcG^{0},T},\Lambda)^{\bd}
    $$
    is given by $\CT_{\mcG^{+},\mcG}^{\naive}[\corr_{\mcG^{0},\mcG}]$.
\end{Definition}
\begin{Lemma}\label{Lemma: recoginzing Satake via constant term}
    Let $\mcF\in D{\ula}(\Hk{\mcG,T},\Lambda)^{\bd}$.
    Assume that $\lambda$ is regular and consider the constant term $\CT_{\mcG^{+},\mcG}$ functor just defined.
    We have that
    $\mcF\in \Sat(\Hk{\mcG,T},\Lambda)$ if and only if $\CT_{\mcG^{+},\mcG}(\mcF)\in \Sat(\Hk{\mcG^{0},T},\Lambda)$.
\end{Lemma}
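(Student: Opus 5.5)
The plan is to follow the argument of \cite[Proposition 6.11]{bandoComparison} (itself modelled on \cite[Proposition VI.7.?]{farguesscholzeGeometrization} and \cite[Proposition 4.7]{ModularRamifiedSatake}). First I reduce to geometric points. By definition, membership in $\Sat(\Hk{\mcG,T},\Lambda)$ is tested by relative perversity in the sense of Hansen--Scholze together with flatness, so that for ULA objects both conditions may be checked after pullback to all geometric points $\Spec(C)\rightarrow T$. By Lemma \ref{lem: properties of naive constant term}(a), $\CT_{\mcG^{+},\mcG}$ commutes with such base change, and so does the shift $\corr_{\mcG^{0},\mcG}$, since it is defined via the locally constant Kottwitz map. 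It also preserves ULA objects: $p^{+}_{!}$ is a pushforward along a map that is proper on bounded supports after restricting to connected components, and $i^{+,*}$ is a pullback, so Braden's theorem together with \cite[Lemma 3.2.8]{preisUlaMotives} (in its étale version) applies. Hence I may assume that $T=\Spec(C)$ with $C$ algebraically closed, and that $(\xi_{1},\xi_{2})\in\Divtilk(C)$ is a fixed point. In that case $B(C)$ is a complete discretely valued field with residue field $C$, and $\Gr^{B}_{\mcG,C}$ is the affine Grassmannian of the special parahoric $\mcP_{x_{0}}$ over it, by Theorem \ref{Theorem: PZ}(b).

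Next I show that, over a point, $\CT$ is t-exact and detects perversity. The key input is the dimension estimate for intersections of semi-infinite orbits with Schubert cells: for $\lambda^{\prime}\in X_{*}(\underline{T})^{+}_{\gamma_{0}}$ and $\mu\in X_{*}(\underline{T})_{\gamma_{0}}$, the intersection $S_{\mu,C}\cap\Gr^{\lambda^{\prime},B}_{\mcG,C}$ has dimension at most $\langle\rho,\mu+\lambda^{\prime}\rangle$, and it is nonempty only for $\mu\le\lambda^{\prime}$, with equality of dimension attained. This is proved as in \cite[Section 3]{ModularRamifiedSatake} and \cite[Lemma 5.3]{AGLR}, using Lemma \ref{lemma: structure of Orbits in Bando affine Grassmannian} to see that Schubert cells have the expected dimension $\langle 2\rho,\lambda^{\prime}\rangle$, uniformly in $(\xi_{1},\xi_{2})$. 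Because $\lambda$ is regular, $\mcG^{0}=\mcT$, so $\Gr^{B}_{\mcT,C}$ is the discrete set $X_{*}(\underline{T})_{\gamma_{0}}$ and perversity there means concentration in degree $0$. The estimate combined with the hyperbolic localization identity of Lemma \ref{lem: properties of naive constant term}(a) then gives two facts, by the standard argument of \cite[Theorem 3.5]{MirkovicVilonen}. If $\mcF$ is perverse, then $\CT(\mcF)$ is concentrated in degree $0$, with $!$- and $*$-bounds coming from the two descriptions. Conversely, if $\CT(\mcF)$ is in degree $0$, then taking an open stratum $\Gr^{\lambda^{\prime}}$ of the support of a shifted perverse cohomology sheaf, as in the proof of Lemma \ref{lem: properties of naive constant term}(b), the single-point intersection $S_{\mu}\cap\Gr^{\lambda^{\prime}}$ shows that any nonzero perverse cohomology ${}^{p}\mathcal{H}^{i}(\mcF)$ with $i\neq 0$ would contribute in a nonzero degree. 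Together with t-exactness and conservativity, this forces $\mcF$ to be perverse.

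Finally, flatness is handled by the same argument. Flat perverse means $\mcF\otimes^{L}_{\Lambda}\mathbb{F}$ is perverse, for $\Lambda=\mcO_{L}$; for field coefficients flatness is automatic. The functor $\CT$ commutes with $-\otimes^{L}_{\Lambda}\mathbb{F}$, so the previous step applied with $\mathbb{F}$-coefficients gives the equivalence. On the torus side, flat perverse means a free module in degree $0$, which is consistent with this. I expect the main obstacle to be the dimension estimate for semi-infinite orbits in the Bando family at an arbitrary point $(\xi_{1},\xi_{2})$, which is needed to make the converse direction work. My first attempt will be to transfer it from the point $(\pi,t)$: both Iwahori--Weyl data and the Bruhat order are independent of the point by Lemma \ref{Lemma: Iwahori Weyl group independent of choice}, and the cells and orbits are defined group-theoretically via the section $w\mapsto\dot{w}$.
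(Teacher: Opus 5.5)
Your route is the paper's. You reduce to a geometric point $\Spec(C)\to T$, using that $\CT_{\mcG^{+},\mcG}$ and the shift by $\corr_{\mcG^{0},\mcG}$ commute with base change, and you reduce to field coefficients. Conservativity (Lemma \ref{lem: properties of naive constant term}(b)) plus $t$-exactness then gives the equivalence. $t$-exactness comes from the bound $\dim(S_{\mu,C}\cap\Gr^{\lambda',B}_{\mcG,C})\le\langle\lambda'+\mu,\rho\rangle$ applied to $!$-extensions from Schubert cells (right exactness), together with Verdier duality and the description $p^{-}_{*}i^{-,!}$ (left exactness). Your separate single-point argument for the converse is redundant. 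Once $\CT$ is $t$-exact, $\CT({}^{p}\mathcal{H}^{i}\mcF)\simeq{}^{p}\mathcal{H}^{i}\CT(\mcF)$ vanishes for $i\neq0$, and conservativity finishes.

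Two steps need correcting.

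First, your reason that $\CT$ preserves ULA objects is wrong. This property is needed for the forward implication, and the paper leaves it implicit.
\begin{itemize}
\item $p^{+}$ is not proper on bounded pieces: already for $\GL_{2}$ and the minuscule Schubert variety $\mathbb{P}^{1}$, one attracting cell is an $\mathbb{A}^{1}$ contracted to a fixed point.
\item $*$-restriction to semi-infinite orbits does not preserve ULA-ness by itself.
\end{itemize}
The correct input is the hyperbolic localization formalism (Braden's theorem, with its compatibility with base change and duality), as in the treatment of constant terms in \cite{farguesscholzeGeometrization}.

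Second, for the dimension estimate, use what you first state: run the argument of \cite[Lemmas 5.3 and 5.5]{AGLR} directly at each point $(\xi_{1},\xi_{2})\in\Divtilk(C)$. There, by Theorem \ref{Theorem: PZ}(b) and its proof, the fibre is the affine Grassmannian of the parahoric attached to $x_{0}$ over the complete discretely valued field $B(C)$, whose residue field is algebraically closed. This is exactly what the paper does. Your fallback of transferring the estimate from $(\pi,t)$ would not work as stated, for three reasons.
\begin{itemize}
\item Equal Iwahori--Weyl groups and Bruhat orders (Lemma \ref{Lemma: Iwahori Weyl group independent of choice}) do not identify the varieties $S_{\mu}\cap\Gr^{\lambda'}$ at different points.
\item Fibre dimension in the family over $\Divtilk$ need not be constant without a flatness statement you do not have.
\item At points $([a]\pi+[b]t,[c]\pi+[d]t)$ with $b\neq0$ one has $B^{+}(C)\simeq W_{\mcO}(C)$ by Lemma \ref{lem: quotient by power series over pi adic ring}. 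A transfer from $(\pi,t)$ would therefore have to carry geometry from equal to mixed characteristic.
\end{itemize}
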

\begin{proof}
    Since we already know by Lemma \ref{lem: properties of naive constant term} that $\CT_{\mcG^{+},\mcG}$ is conservative, it suffices to show $t$-exactness for the relative perversity.
    By definition, this therefore reduces to the case $T=\Spec(C)$, where $C\in \Perf_{k_{F}}$ is algebraically closed and we are considering all objects over perfect $C$-algebras and we are concerned with the usual notion of perverse sheaves.
    We follow the proof of \cite[Proposition 4.6]{ModularRamifiedSatake}; in particular, as in that reference, we may reduce to the case when $\Lambda$ is a field.
    Therefore, let $\mcF\in ^{p}D^{\leq 0}(\Hk{\mcG,C},\Lambda)$.
    Then $\mcF$ is a successive extension of sheaves of the form $j^{\lambda,\prime}_{!}\Lambda[\langle \lambda^{\prime}, 2\rho \rangle]$, where $j^{\lambda,\prime}\colon \Gr^{\lambda,\prime,B}_{\mcG,C}\hookrightarrow \Gr^{B}_{\mcG,C}$ is the locally closed immersion already considered in the proof of Lemma \ref{lem: properties of naive constant term} above.
    Then we have that
    $$
    (\CT_{\mcG^{+},\mcG}(j^{\lambda,\prime}_{!}\Lambda[\langle \lambda^{\prime}, 2\rho \rangle]))_{t^{\mu}}=R\Gamma_{c}(S_{\mu,C}\cap \Gr^{\lambda,\prime,B}_{\mcG,C},\Lambda)[\langle \lambda^{\prime}+\mu,2\rho \rangle].
    $$
    Vanishing of the complex $R\Gamma_{c}(S_{\mu,C}\cap \Gr^{\lambda,\prime,B}_{\mcG,C},\Lambda)$ in degrees greater than $\langle \lambda^{\prime} + \mu, 2\rho \rangle$ therefore follows from $S_{\mu,C}\cap \Gr^{\lambda,\prime,B}_{\mcG,C}$ being equidimensional of dimension $\langle \lambda^{\prime}+\mu, \rho \rangle$, which can be proved in our setting as in \cite[Lemma 5.5]{AGLR}. This shows right $t$-exactness. Left $t$-exactness follows by Verdier duality and using Lemma \ref{lem: properties of naive constant term}(a).
\end{proof}
Now we can turn to the proof of Proposition \ref{proposition: Satake categories get identified}.
\begin{proof}
    Note that upon passing to $\ell$-adic realizations, the functor that induces the equivalence in Theorem \ref{Theorem: Main} agrees with the obvious extension to the ramified case of the functor that Bando uses in \cite[Theorem 5.1]{bandoComparison}.
    Therefore, given the previous Lemma \ref{Lemma: recoginzing Satake via constant term}, the same proof as the one for \cite[Theorem 6.6]{bandoComparison} applies also here.
\end{proof}
\printbibliography
\end{document}